\newtheorem{theorem}{Theorem}
\newtheorem{definition}{Definition}
\newtheorem{proposition}{Proposition}
\newtheorem{remark}{Remark}
\newtheorem{example}{Example}
\newcommand{\bq}{{\bm{q}}}
\newcommand{\be}{{\bm{e}}}
\newcommand{\bx}{{\bm{x}}}
\newcommand{\by}{{\bm{y}}}
\newcommand{\bz}{{\bm{z}}}
\newcommand{\bbf}{{\bm{f}}}
\newcommand{\differential}{{\rm{d}}}
\newcommand{\bA}{\bm{A}}
\newcommand{\bM}{\bm{M}}
\newcommand{\bb}{\bm{b}}
\renewcommand{\det}{{\mathrm{det}}}
\def\spacingset#1{\def\baselinestretch{#1}\small\normalsize}
\title{\huge{
Finite Horizon Density Control for Static State Feedback\\Linearizable Systems}
}
\author{Kenneth F. Caluya, and Abhishek Halder
\thanks{Kenneth F. Caluya, and Abhishek Halder are with the Department of Applied Mathematics, University of California, Santa Cruz, CA 95064, USA,
        {\tt\small{\{kcaluya,ahalder\}@ucsc.edu}}. This research was partially supported by NSF award 1923278.%
}}
\begin{document}

\markboth{\today}{}

\maketitle

\begin{abstract}
We consider the problem of steering the joint state probability density function of a static feedback linearizable control system over finite time horizon. Potential applications include controlling neuronal populations, swarm guidance, and probabilistic motion planning. Our theoretical developments reveal the structure of the minimum energy controller for the same, and can be viewed as a generalization of the Benamou-Brenier theory for dynamic optimal transport. Further analytical results are derived for solving the feasibility problem, i.e., for finding  feedback that steers a given joint density function to another in fixed time, subject to the controlled nonlinear dynamics. An  algorithm based on the Schr\"{o}dinger bridge is proposed to approximate a feasible controller; a numerical example is worked out to illustrate the same. 
\end{abstract}


\noindent{\bf Keywords:} Stochastic control, density control,  feedback linearization, optimal transport, Schr\"{o}dinger bridge.


\section{Introduction}
Steering the joint distribution of the state vector $\bx(t)$ of a controlled dynamical system from a prescribed distribution to another over a finite time horizon (say, $t\in[0,1]$) is an emerging research area \cite{brockett2007optimal,mazurenko2011dynamic,brockett2012notes}, with applications in controlling robotic swarms \cite{pimenta2008control}, shaping the bulk magnetization distribution for Nuclear Magnetic Resonance (NMR) spectroscopy \cite{li2009ensemble}, controlling neuronal populations \cite{monga2018synchronizing}, and process control \cite{wang2004reachability}. These applications concern population distribution whose shape is actively controlled over time while preserving the physical mass (say, of unit amount, without loss of generality). The conservation of mass allows an alternative interpretation of the underlying mathematical problem -- instead of steering a large number of systems with identical dynamics, one can think of steering a single system with probabilistic uncertainty in its initial and terminal state, modeled via prescribed initial and terminal joint state probability density functions (PDFs), viz. $\rho_{0}(\bx)$ at $t=0$, and $\rho_{1}(\bx)$ at $t=1$. This latter interpretation corresponds to atypical stochastic control problem since unlike the classical two-point boundary value problems in finite dimensional vector space, now the boundary ``values" are measure-valued\footnote{Hereafter, we will tacitly assume that the underlying probability measures are absolutely continuous, i.e., the joint PDFs exist.}. Therefore, we are led to solving two point boundary value problem on an infinite dimensional manifold. In the robotics literature, control in the space of joint PDFs is often referred to as the ``belief space" control problem; see e.g., \cite{prentice2009belief}. The purpose of this paper is to design control input for feedback linearizable systems to steer $\rho_{0}(\bx)$ to $\rho_{1}(\bx)$.

In the systems-control literature, two broad design approaches have appeared for density control. In the so-called ``ensemble control" \cite{li2011ensemble,becker2012approximate} approach, one designs \emph{open-loop control} $\bm{u}(t)$ in the sense that at any given time $t$, the same control is applied at all state space locations, i.e., $\bm{u}(t)$ is a broadcast. This paper follows the other approach, where one designs (possibly mixed feed-forward) \emph{feedback control} $\bm{u}(\bm{x},t)$, i.e., the control is spatially inhomogeneous. 

Designing feedback for finite horizon PDF shaping is closely related to the classical optimal transport problem \cite{villani2003topics}. Specifically, the dynamic formulation \cite{benamou2000computational} of the optimal transport, in controls language, is a problem of determining the minimum energy input $\bm{u}(\bm{x},t)$ that steers the joint PDF $\rho_{0}(\bx)$ to $\rho_{1}(\bx)$ over $t\in[0,1]$ subject to \emph{zero prior dynamics}, i.e., it solves\footnote{Throughout this paper, the notation $\bm{x}\sim\rho$ means that the random vector $\bx$ has joint PDF $\rho$.} 
\begin{subequations}
\begin{align}	
&\qquad\quad\underset{\bm{u}}{\text{inf}}
\quad\mathbb{E} \bigg \{  \int_{0}^{1}  \lVert \bm{u}(\bx,t) \rVert_{2}^{2} \: \differential t \bigg \} \label{OMTobj}\\ 
&\text{subject to} \quad  \dot{\bx} = \bm{u},\label{OMTconstrDyn}\\
& \qquad\qquad\quad \bx(0) \sim \rho_{0}(\bx), \quad \bx(1) \sim \rho_{1}(\bx),\label{OMTconstrBdy}
\end{align} 
\label{OMT}	
\end{subequations}
where the endpoint joint PDFs $\rho_{0},\rho_{1}$ are assumed to have finite second moments. In (\ref{OMTobj}), the expectation operator $\mathbb{E}\{\cdot\}$ is taken with respect to (w.r.t.) the controlled state PDF $\rho(\bx,t)$ satisfying $\rho(\bx,0)=\rho_{0}(\bx)$ and $\rho(\bx,1)=\rho_{1}(\bx)$. For (\ref{OMT}), the existence and uniqueness of the optimal control $\bm{u}^{\text{opt}}(\bx,t)$ are guaranteed, and $\bm{u}^{\text{opt}}$ is known to be a (non-autonomous) gradient vector field \cite{benamou2000computational}. This result has been generalized \cite{chen2017optimal} for the case when (\ref{OMTconstrDyn}) is replaced by a \emph{controlled linear system} $\dot{\bx}(t) = \bm{A}(t)\bx + \bm{B}(t)\bm{u}$, where $\bx \in \mathbb{R}^{n}$, $\bm{u}\in\mathbb{R}^{m}$, and the pair $(\bm{A}(t),\bm{B}(t))$ is controllable, and also for the case \cite{chen2018optimal} when the ``cost-to-go" in (\ref{OMTobj}) involves an additional term that is quadratic in state $\bx$. A related work \cite{halder2016finite} derived the optimal controller by penalizing a PDF-level terminal cost instead of enforcing a terminal PDF constraint. PDF tracking controllers \cite{halder2014geodesic,chen2018state} have also appeared for the controlled linear dynamics. Also noteworthy are the works on finite horizon covariance control \cite{grigoriadis1997minimum,bakolas2018finite,okamoto2018optimal}.

Much less studied is the case when the prior dynamics is \emph{nonlinear}; relevant works include \cite{agrachev2009optimal} and \cite{elamvazhuthi2019optimal}. A major impediment for the nonlinear case is the issue of reachability, i.e., whether $\rho_{1}$ is reachable from $\rho_{0}$ in unit time. Even if reachability can be guaranteed, it is less obvious how to generalize the development in \cite{chen2017optimal}. Our intent in this paper is to address these issues for feedback linearizable systems. For this class of nonlinear control systems, we show that one can steer $\rho_{0}$ to $\rho_{1}$ in finite time, and develop the theory for the minimum energy controller (Section \ref{SectionMinEnergySteering}). If one is willing to dispense the minimum energy criterion, then we show a strategy to derive the explicit form of the controller (Section \ref{SecSuboptimal}) solving the feasibility problem, i.e., steering $\rho_{0}$ to $\rho_{1}$ in finite time subject to the prescribed feedback linearizable dynamics. Furthermore, using a stochastic dynamical regularization, we provide a computational framework (Section \ref{SecSchrodinger}) for the controlled joint state PDF evolution via the Schr\"{o}dinger bridge \cite{schrodinger1931umkehrung,schrodinger1932theorie}. A numerical example is worked out (Section \ref{SecNumExample}) to illustrate the ideas proposed herein. Thus, the paper is structured as follows. In Section II, we summarize the requisite background on state feedback linearizable systems for the single input case. Section III provides the stochastic optimal control formulation, its reformulation via feedback linearizing transformation, the necessary conditions of optimality, as well as representation formulas for the associated value function. In Section IV, we show that how ideas from the theory of optimal mass transport and feedback linearization can be brought together to derive a feasible controller. Section V shows that a dynamic stochastic regularization can help in computing the aforesaid feasible controller. Section VI provides an illustrative numerical example while Section VII concludes the paper.

Before delving into the details, let us comment on the practical scope of the problem considered herein. With the growing interest in controlling a large population of autonomous ground and aerial vehicles, a natural question is whether one could exploit the structural aspects of their trajectory-level dynamical nonlinearities in density control. Many of these systems are known to be differentially flat \cite{fliess1995flatness, murray1995differential, fliess1999lie} -- an aspect that has been utilized in the robotics-control literature for efficient motion planning \cite{ross2002pseudospectral,mellinger2011minimum,hwan2013optimal}. Every feedback linearizable system is differentially flat; conversely, it is known \cite[Theorem 4.1]{van1998differential} that every differentially flat system can be put in Brunovsky normal (i.e., chain of integrator) form in an open and dense set through regular endogenous (possibly dynamic) feedback. In particular, also known is the fact that single input differential flatness is equivalent to being static state feedback linearizable \cite[Theorem 5.3]{van1998differential}. Thus, the developments in this paper are expected to lay the foundation for belief space motion planning for many autonomous systems of practical interest.

\subsubsection*{\!\!\!\!\!\!\!\!\!\!\!\!\!\!Notations and nomenclature} We use $\bm{0}$ to denote the column vector of appropriate dimension containing all zeros. The symbols $\bm{e}_{1}, \hdots, \bm{e}_{n}$ denote the (column) basis vectors in $\mathbb{R}^{n}$. We use $\circ$, ${\rm{spt}}(\cdot)$, and $\sharp$ to respectively denote the function composition, support of a function, and push-forward of a PDF. The standard Euclidean inner product, gradient, Laplacian, Hessian, and determinant operators are denoted by $\langle\cdot,\cdot\rangle$, $\nabla$, $\Delta$, ${\rm{Hess}}(\cdot)$, and $\det(\cdot)$, respectively. For $n\times n$ symmetric positive definite matrix $\bm{\Gamma}$, we denote the weighted Euclidean inner product between $\bx,\by\in\mathbb{R}^{n}$ as $\langle \bx, \by\rangle_{\bm{\Gamma}}:=\by^{\top}\bm{\Gamma}\bx$, and the associated weighted Euclidean norm as $\parallel\bx\parallel_{\bm{\Gamma}}:=\sqrt{\langle \bx, \bx\rangle_{\bm{\Gamma}}}$. Of course, the standard inner product $\langle\cdot,\cdot\rangle \equiv \langle\cdot,\cdot\rangle_{\bm{I}}$, i.e., when $\bm{\Gamma}\equiv\bm{I}$ (the identity matrix). We sometimes put a subscript to the expectation operator $\mathbb{E}\{\cdot\}$ to clarify w.r.t. which probability measure the expectation is taken. We drop the subscript when there is no potential confusion.

The Lie bracket of two vector fields $\bm{\xi}$ and $\bm{\eta}$ at $\bx\in\mathbb{R}^{n}$ is a new vector field $\left[\bm{\xi},\bm{\eta}\right](\bx):=(\nabla_{\bx}\bm{\eta})\bm{\xi}(\bx) - (\nabla_{\bx}\bm{\xi})\bm{\eta}(\bx)$. For $k\in\mathbb{N}$, the $k$-fold Lie bracketing of $\bm{\eta}$ with the same vector field $\bm{\xi}$ is denoted as ${\rm{ad}}_{\bm{\xi}}^{k}\bm{\eta}:=\left[\bm{\xi},{\rm{ad}}_{\bm{\xi}}^{k-1}\bm{\eta}\right]$; by convention ${\rm{ad}}_{\bm{\xi}}^{0}\bm{\eta}=\bm{\eta}$. The Lie derivative of a scalar-valued function $\lambda(\bx)$ w.r.t. the vector field $\bm{\xi}$ evaluated at $\bx$ is $L_{\bm{\xi}}\lambda(\bx):=\langle\nabla_{\bx}\lambda,\bm{\xi}\rangle(\bx)$. For $k\in\mathbb{N}$, the $k$-fold Lie derivative of $\lambda$ w.r.t. the same vector field $\bm{\xi}$ evaluated at $\bx$ is denoted as $L_{\bm{\xi}}^{k}\lambda(\bx):=\langle\nabla_{\bx}L_{\bm{\xi}}^{k-1}\lambda,\bm{\xi}\rangle(\bx)$; by convention $L_{\bm{\xi}}^{0}\lambda(\bx)=\lambda(\bx)$. 

The Legendre-Fenchel conjugate of a function $\omega : \mathbb{R}^{n} \mapsto \mathbb{R}$, is a convex function $\omega^{*} : \mathbb{R}^{n} \mapsto \mathbb{R}$, given by
\begin{align*}
\omega^{*}(\bq) := \underset{\bm{p}\in\mathbb{R}^{n}}{\sup}\{\langle\bm{q},\bm{p}\rangle - \omega(\bm{p})\}.	
\end{align*}
For a smooth manifold $\mathcal{M}$, the symbol $\mathcal{T}_{\bx}\mathcal{M}$ denotes the tangent space at $\bx\in\mathcal{M}$, and $\mathcal{T}\mathcal{M}:=\{(\bx,\bm{v})\mid\bx\in\mathcal{M},\bm{v}\in\mathcal{T}_{\bx}\mathcal{M}\}$ denotes the tangent bundle. We use the symbol $\mathcal{T}^{*}\mathcal{M}$ to denote the cotangent bundle. We use $\mathcal{N}\left(\bm{\mu},\bm{\Sigma}\right)$ to denote a multivariate Gaussian PDF with mean $\bm{\mu}$ and covariance $\bm{\Sigma}$. Standard abbreviations PDE, ODE and SDE stand for partial, ordinary and stochastic differential equation, respectively.


\section{Static Feedback Linearizable Systems}\label{SectionFLS}
For clarity of exposition, we consider \emph{single input} control affine systems of the form 
\begin{align}
	\dot{\bx} = \bm{f}(\bx) + \bm{g}(\bx)u, \quad \bx\in\mathbb{R}^{n},\;u\in\mathbb{R}.
	\label{SingleInputControlAffine}
\end{align}
It should be apparent from the development below that our ideas generalize for the multi-input case, but we will not pursue the generalizations here. We suppose that full state feedback is available. Next, we define the full state static feedback linearizable system, and collect some known results which will be useful in the sequel.
\begin{definition}\label{DefFeedbackLin}\cite[Ch. 4.2]{isidori1989nonlinear}
Given a point $\overline{\bx}\in\mathbb{R}^{n}$, and a neighborhood $\mathcal{X}$ of $\overline{\bx}$, the system (\ref{SingleInputControlAffine}) is said to be \emph{full state static feedback linearizable} around $\overline{\bx}$ if there exists a diffeomorphism $\bm{\tau}(\cdot)$ defined on $\mathcal{X}$, and a feedback $u = \alpha(\bx) + \beta(\bx)v$ also defined on $\mathcal{X}$, such that the corresponding closed loop system $\dot{\bx}=\bm{f}(\bx)+\bm{g}(\bx)(\alpha(\bx) + \beta(\bx)v)$, in the coordinates $\bz := \bm{\tau}(\bx)$, is linear and controllable, i.e.,
\begin{subequations}
\begin{align}
\left[\nabla_{\bx}\bm{\tau}\left(\bm{f}(\bx)+\bm{g}(\bx)\alpha(\bx)\right)\right]_{\bx = \bm{\tau}^{-1}(\bm{z})} &= \bm{Az},\\
\left[\nabla_{\bx}\bm{\tau}\left(\bm{g}(\bx)\beta(\bx)\right)\right]_{\bx = \bm{\tau}^{-1}(\bm{z})} &= \bm{b},	
\end{align}	
\label{AzbForm}
\end{subequations} 
where the matrix-vector pair $(\bm{A},\bm{b})$ satisfies the Kalman rank condition: ${\rm{rank}}\left(\bm{b} \vert \bm{Ab} \vert \hdots \vert \bm{A}^{n-1}\bm{b}\right)=n$. 
\end{definition}
\begin{remark}\label{RelDegreeRemark}
It is well-known (see e.g., \cite[Lemma 2.4]{isidori1989nonlinear}) that (\ref{SingleInputControlAffine}) is full state static feedback linearizable iff there exists scalar-valued function $\lambda(\bx)$ defined on $\mathcal{X}$ such that the input-output system
\begin{align}
\dot{\bx} = \bm{f}(\bx) + \bm{g}(\bx)u, \quad y = \lambda(\bx),
\label{IOfictitous}	
\end{align}
has relative degree\footnote{Relative degree is the number of times one needs to differentiate the (in our context, fictitious) output $y$ w.r.t. $t$ so that the input $u$ appears explicitly.} equal to $n$ at $\overline{\bx}$. 	
\end{remark}
When it exists, $\lambda(\bx)$ satisfies the following $n-1$ first order PDEs, and one PDE not equal to zero condition:
\begin{subequations}
\begin{align}
L_{\bm{g}}\lambda(\bx) = L_{{\rm{ad}}_{\bm{f}}\bm{g}}\lambda(\bx) = \hdots = L_{{\rm{ad}}_{\bm{f}}^{n-2}\bm{g}}\lambda(\bx) = 0, \label{SysPDE}\\
L_{{\rm{ad}}_{\bm{f}}^{n-1}\bm{g}}\lambda(\bx)\bigg\vert_{\bx = \overline{\bx}} \neq 0. \label{neqZero}	
\end{align}
\label{lambdacond}	
\end{subequations}
 The following constructive result allows to verify the existence of $\lambda(\bx)$ in Remark \ref{RelDegreeRemark}.
 \begin{proposition}\cite[Theorem 2.6]{isidori1989nonlinear}\label{ExistenceOflambda}
 A scalar-valued function $\lambda(\bx)$ defined on $\mathcal{X}$ satisfying (\ref{lambdacond}) exists iff the following two conditions are satisfied:
 \begin{enumerate}
 \item[(i)] ${\rm{rank}}\left(\left[\bm{g}(\bx)\:\bigg\vert\:{\rm{ad}}_{\bm{f}}\bm{g}(\bx)\:\bigg\vert\hdots\bigg\vert{\rm{ad}}_{\bm{f}}^{n-1}\bm{g}(\bx)\right]_{\bx=\overline{\bx}}\right)=n$,
 \vspace*{0.03in} 
 \item[(ii)] $\mathcal{D}(\bx):={\rm{span}}\{\bm{g},{\rm{ad}}_{\bm{f}}\bm{g},\hdots,{\rm{ad}}_{\bm{f}}^{n-2}\bm{g}\}(\bx)$ is involutive\footnote{Given $m$ vectors fields $\bm{\xi}_{1}(\bx), \hdots, \bm{\xi}_{m}(\bx)$ in $\mathbb{R}^{n}$, we say $\mathcal{D}(\bx):={\rm{span}}\{\bm{\xi}_{1}, \hdots, \bm{\xi}_{m}\}(\bx)$ is involutive at $\bx\in\mathbb{R}^{n}$, if for all $\bm{\xi}_{i}(\bx),\bm{\xi}_{j}(\bx)\in\mathcal{D}(\bx)$, we get that the Lie bracket $\left[\bm{\xi}_{i},\bm{\xi}_{j}\right](\bx)\in\mathcal{D}(\bx)$, where $i,j=1,\hdots,m$.} near $\bx=\overline{\bx}$. 	
 \end{enumerate}
 \end{proposition}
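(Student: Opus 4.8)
The plan is to reduce the claim to the Frobenius theorem. The pivotal observation is that the $n-1$ PDEs in (\ref{SysPDE}) state exactly that $\nabla\lambda$ is orthogonal to the distribution $\mathcal{D}(\bx)$ at every point, whereas (\ref{neqZero}) states that $\nabla\lambda$ is \emph{not} orthogonal to ${\rm{ad}}_{\bm{f}}^{n-1}\bm{g}$ at $\overline{\bx}$. The only computational tool needed beyond Frobenius is the Leibniz-type identity $L_{[\bm{\xi},\bm{\eta}]}\lambda = L_{\bm{\xi}}L_{\bm{\eta}}\lambda - L_{\bm{\eta}}L_{\bm{\xi}}\lambda$, valid for any smooth vector fields $\bm{\xi},\bm{\eta}$ and scalar $\lambda$ (a one-line consequence of the symmetry of ${\rm{Hess}}(\lambda)$ and the definition of the Lie bracket used here).

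For the ``if'' direction, suppose (i) and (ii) hold. By (i) and lower semicontinuity of the rank, the fields $\bm{g},{\rm{ad}}_{\bm{f}}\bm{g},\hdots,{\rm{ad}}_{\bm{f}}^{n-1}\bm{g}$ are pointwise linearly independent on a neighborhood of $\overline{\bx}$ (which I keep calling $\mathcal{X}$ after shrinking), so in particular $\mathcal{D}$ there has constant rank $n-1$; together with (ii) this makes $\mathcal{D}$ a regular involutive distribution. The Frobenius theorem then furnishes, after a further shrinking of $\mathcal{X}$, coordinates $(y_{1},\hdots,y_{n})$ in which $\mathcal{D}={\rm{span}}\{\partial/\partial y_{1},\hdots,\partial/\partial y_{n-1}\}$; I would take $\lambda := y_{n}$, so that $\nabla\lambda$ is nowhere zero on $\mathcal{X}$ and is orthogonal to every section of $\mathcal{D}$, which is precisely (\ref{SysPDE}). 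Finally, (\ref{neqZero}) follows by contradiction: were $L_{{\rm{ad}}_{\bm{f}}^{n-1}\bm{g}}\lambda(\overline{\bx})=0$ as well, then $\nabla\lambda(\overline{\bx})$ would be orthogonal to a set of $n$ vectors spanning $\mathbb{R}^{n}$ by (i), forcing $\nabla\lambda(\overline{\bx})=\bm{0}$.

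For the ``only if'' direction, suppose $\lambda$ on $\mathcal{X}$ satisfies (\ref{lambdacond}). The first step is a bookkeeping lemma obtained by induction via the Leibniz identity (taking $\bm{\xi}=\bm{f}$, $\bm{\eta}={\rm{ad}}_{\bm{f}}^{j-1}\bm{g}$, with the base index range supplied by (\ref{SysPDE})): for all $i,j\geq 0$ with $i+j\leq n-1$,
\begin{align*}
L_{{\rm{ad}}_{\bm{f}}^{j}\bm{g}}L_{\bm{f}}^{i}\lambda = \begin{cases} 0, & i+j\leq n-2,\\ (-1)^{j}\,L_{\bm{g}}L_{\bm{f}}^{n-1}\lambda, & i+j=n-1,\end{cases}
\end{align*}
with $L_{\bm{g}}L_{\bm{f}}^{n-1}\lambda(\overline{\bx})=\pm L_{{\rm{ad}}_{\bm{f}}^{n-1}\bm{g}}\lambda(\overline{\bx})\neq 0$. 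Hence the $n\times n$ matrix with $(i,j)$ entry $\langle\nabla(L_{\bm{f}}^{i}\lambda),{\rm{ad}}_{\bm{f}}^{j}\bm{g}\rangle$ is anti-triangular with nonzero anti-diagonal at $\overline{\bx}$, hence invertible there; factoring it as (matrix of rows $\nabla(L_{\bm{f}}^{i}\lambda)$) times (matrix of columns ${\rm{ad}}_{\bm{f}}^{j}\bm{g}$), both factors are invertible at $\overline{\bx}$, which is exactly (i). Consequently $\mathcal{D}$ has constant rank $n-1$ near $\overline{\bx}$ and, since $\nabla\lambda$ is nonzero there and orthogonal to $\mathcal{D}$ by (\ref{SysPDE}), $\mathcal{D}$ coincides with $\ker\nabla\lambda$ on that neighborhood; then for any sections $\bm{\xi},\bm{\eta}$ of $\mathcal{D}$ we have $L_{\bm{\xi}}\lambda\equiv L_{\bm{\eta}}\lambda\equiv 0$, whence $L_{[\bm{\xi},\bm{\eta}]}\lambda = L_{\bm{\xi}}(0)-L_{\bm{\eta}}(0)=0$, so $[\bm{\xi},\bm{\eta}]$ is again a section of $\mathcal{D}$; specializing $\bm{\xi},\bm{\eta}$ to the spanning fields gives the involutivity (ii) in the form stated in the footnote.

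I expect the main obstacle to be the bookkeeping lemma in the ``only if'' direction --- in particular, extracting the exact anti-diagonal value $\pm L_{\bm{g}}L_{\bm{f}}^{n-1}\lambda$ (not merely ``nonzero'') from the $n-1$ PDEs, which requires an induction that simultaneously tracks the index $i$ and produces that constant through the Leibniz identity; everything else is a direct invocation of Frobenius plus elementary linear algebra. A second point needing care is ordering: involutivity must be, and need only be, verified on the neighborhood where $\mathcal{D}$ is already known to have constant rank $n-1$, so the rank conclusion from (i) has to be in hand before the $\ker\nabla\lambda$ identification is used.
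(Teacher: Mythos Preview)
The paper does not supply its own proof of this proposition; it is simply quoted from Isidori's textbook (the statement header carries the citation \cite[Theorem 2.6]{isidori1989nonlinear}) and used as background. So there is nothing in the paper to compare your argument against.

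That said, your proof is correct and is essentially the classical argument one finds in Isidori and elsewhere: Frobenius for the ``if'' direction, and the anti-triangular pairing matrix $\big(L_{{\rm ad}_{\bm f}^{j}\bm g}L_{\bm f}^{i}\lambda\big)_{i,j}$ for the ``only if'' direction. Your bookkeeping lemma with the $(-1)^{j}L_{\bm g}L_{\bm f}^{n-1}\lambda$ anti-diagonal is exactly the standard identity (often stated as $L_{{\rm ad}_{\bm f}^{j}\bm g}L_{\bm f}^{i}\lambda = (-1)^{j}L_{\bm g}L_{\bm f}^{i+j}\lambda$ whenever the lower-order terms vanish), and the factorization into $\big(\nabla L_{\bm f}^{i}\lambda\big)\cdot\big({\rm ad}_{\bm f}^{j}\bm g\big)$ to extract the rank condition is the usual trick. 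The identification $\mathcal{D}=\ker\nabla\lambda$ followed by the Leibniz identity for involutivity is also standard, and you correctly flag that the constant-rank conclusion must precede it.
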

Given a system of the form (\ref{SingleInputControlAffine}), one first checks the conditions in Proposition \ref{ExistenceOflambda} to ascertain if the system is full state static feedback linearizable or not. If it is, then one solves (\ref{lambdacond}) to determine an admissible $\lambda(\bx)$ (in general, (\ref{lambdacond}) may admit non-unique solutions for $\lambda$). The feedback linearizing tuple $(\bm{\tau},\alpha,\beta)$ in (\ref{DefFeedbackLin}) can then be obtained as 
\begin{subequations}
\begin{align}
\bm{\tau}(\bx) &= \left(\lambda(\bx),L_{\bm{f}}\lambda(\bx),\hdots,L_{\bm{f}}^{n-1}\lambda(\bx)\right)^{\top},\label{ComputeTau}\\
\alpha(\bm{x}) &= -L_{\bm{f}}^{n}\lambda(\bx)/L_{\bm{g}}L_{\bm{f}}^{n-1}\lambda(\bx),\label{ComputeAlpha}\\
\beta(\bx) &= 1/L_{\bm{g}}L_{\bm{f}}^{n-1}\lambda(\bx).	\label{ComputeBeta}
\end{align}	
\label{alphabetatau}
\end{subequations}
The tuple $(\bm{\tau},\alpha,\beta)$ transforms (\ref{SingleInputControlAffine}) in state-control pair $(\bx,u)$ to the feedback linearized (Brunovsky normal) form
\begin{align}
\dot{\bm{z}} = \bm{A}\bm{z} + \bm{b}v,\;\bm{A}:=\left[\bm{0}\vert \bm{e}_{1} \vert \bm{e}_{2}\vert\hdots\vert\bm{e}_{n-1}\right],\;\bm{b}:=\bm{e}_{n},
\label{feedbacklinearizedform}	
\end{align}
in state-control pair $(\bm{z},v)$. 

\begin{remark}\label{u2vandv2u}
Since the relative degree is equal to $n$ (see Remark \ref{RelDegreeRemark}), hence $L_{\bm{g}}L_{\bm{f}}^{n-1}\lambda(\bx)\neq 0$ in $\mathcal{X}$. Therefore, (\ref{ComputeAlpha})-(\ref{ComputeBeta}) are well-defined. Furthermore, notice from Definition \ref{DefFeedbackLin} that $u=\alpha(\bx)+\beta(\bx)v$ implies $v = \gamma(\bx) + \delta(\bx)u$, where $\gamma(\bx):=-\alpha(\bx)/\beta(\bx)$ and $\delta(\bx):=1/\beta(\bx)$. Since $\beta(\bx)\neq 0$ in $\mathcal{X}$, the quantities $\gamma(\bx),\delta(\bx)$ are also well-defined therein.
\end{remark}


\section{Minimum Energy PDF Steering}\label{SectionMinEnergySteering}
In this Section, we formulate the stochastic optimal control problem, derive the conditions of optimality, and provide representation formulas for the solution of the same.
\subsection{Stochastic Control Problem}\label{SubsecStochasticControlProblem}
Motivated by \cite{chen2017optimal}, we consider the following minimum energy stochastic optimal control problem: 
\begin{subequations}
\begin{align}	
&\qquad\quad\underset{u \in \mathcal{U}}{\text{inf}}
& & \mathbb{E} \bigg \{  \int_{0}^{1} \frac{1}{2} \lvert u(\bx,t) \rvert^{2} \: \differential t \bigg \} \label{FLSobj}\\ 
& \;\;\text{subject to} & &  \dot{\bx} = \bm{f}(\bx) + \bm{g}(\bx)u,\label{FLSdyn}\\
& & &  \bx(0) \sim \rho_{0}(\bx), \quad \bx(1) \sim \rho_{1}(\bx),\label{PDFconstr2pbvp}
\end{align} 
\label{FLSprobMinEnergy}	
\end{subequations}
where $\bx\in\mathcal{X}\subseteq\mathbb{R}^{n}$, $u\in\mathbb{R}$, and (\ref{FLSdyn}) is a full state static feedback linearizable system, i.e., the vector fields $\bm{f}$ and $\bm{g}$ are sufficiently smooth satisfying the two conditions in Proposition \ref{ExistenceOflambda}. In particular, we assume $\bm{f},\bm{g}$ to be componentwise in class $C^{n+1}\left(\mathcal{X}\right)$ which ensures that the Lie derivatives in (\ref{alphabetatau}) are well-defined, and that $\alpha,\beta$ are continuous. Furthermore, we assume that $\rho_{0},\rho_{1}$ have finite second moments. The set of admissible controls $\mathcal{U}$ consists of all finite energy inputs $u:\mathcal{X}\times [0,1] \mapsto \mathbb{R}$. The control objective is to steer the joint state PDF $\rho(\bx,t)$ from the prescribed initial PDF $\rho_{0}$ at $t=0$ to the prescribed terminal PDF $\rho_{1}$ at $t=1$ while minimizing the control effort averaged over the ensemble of controlled state trajectories. We clarify here that the difference between (\ref{FLSprobMinEnergy}) and the setup considered in \cite{chen2017optimal} is that the controlled dynamics in (\ref{FLSdyn}) is nonlinear.

Problem (\ref{FLSprobMinEnergy}) can be transcribed into a ``fluid dynamics'' version \cite{benamou2000computational} given by: 
\begin{subequations}
\begin{align}
& \qquad\;\underset{(\rho,u)}{\text{inf}}
& & \int_{\mathcal{X}} \int_{0}^{1} \frac{1} {2}  \lvert u(\bx,t) \rvert^{2} \;\rho(\bx,t) \: \differential t \: \differential \bx\label{OrigOCFluids}\\ 
& \text{subject to} & &  \dfrac{\partial \rho}{\partial t} + \nabla_{\bx} \cdot \left(\left(\bm{f}(\bx) + \bm{g}(\bx)u\right) \rho\right) = 0, \label{FLSLiouville}\\
& & &  \rho(\bx,0)= \rho_{0}(\bx),\quad\rho(\bx,1) = \rho_{1}(\bx),\label{FLSpdftpbvp}
\end{align} 
\label{FLSdensityprob}
\end{subequations}
where the infimum is taken over all joint state PDF and admissible control pairs $(\rho,u)\equiv(\rho(\bx,t),u(\bx,t))$ satisfying\footnote{\label{FootnoteWeakSoln}Here we clarify that $\rho(\bx,t)$ is a weak solution of (\ref{FLSLiouville}) on the time interval $[0,1]$ if for all compactly supported test functions $\chi\in C_{c}^{\infty}\left(\mathbb{R}^{n}\times[0,1]\right)$, we have $\int_{\mathbb{R}^{n}}\chi(\bx,t=1)\rho_{1}(\bx)\differential\bx = \int_{\mathbb{R}^{n}}\chi(\bx,t=0)\rho_{0}(\bx)\differential\bx + \int_{0}^{1}\int_{\mathbb{R}^{n}}\left(\frac{\partial\chi}{\partial t} + \langle\bm{f} + \bm{g} u,\nabla_{\bx}\chi\rangle\right)\rho(\bx,t)\differential\bx\:\differential t$.} the feasibility conditions (\ref{FLSLiouville})-(\ref{FLSpdftpbvp}). Here, (\ref{FLSLiouville}) is the Liouville PDE governing the mass-preserving flow of the PDF $\rho(\bx,t)$ in $\mathcal{X}$, associated with the state ODE (\ref{FLSdyn}). Problem (\ref{FLSprobMinEnergy}) or its equivalent (\ref{FLSdensityprob}) can be interpreted as optimal transport over a static feedback linearizable system.


\subsection{Reformulation in Feedback Linearized Coordinates}\label{SubsecReformulation}

From Section \ref{SectionFLS}, recall that the map $\bz = \bm{\tau}(\bx)$ is a diffeomorphism on $\mathcal{X}\subseteq\mathbb{R}^{n}$ (the region where the full state static feedback linearization is valid). Formally, 
\begin{align}
\mathcal{X}:=\{\bx\in\mathbb{R}^{n}\mid \det(\nabla_{\bx}\bm{\tau})\neq 0\},
\label{DefineSetX}	
\end{align}
and $\bm{\tau}:\mathcal{X}\mapsto\mathcal{Z}$, where 
\begin{align}
\mathcal{Z}:=\{\bz\in\mathbb{R}^{n}\mid \bz = \bm{\tau}(\bx), \bx\in\mathcal{X}\}.
\label{DefineSetZ}	
\end{align}
Our idea now is to reformulate (\ref{FLSdensityprob}) in $\mathcal{Z}$ by pushing forward the PDFs $\rho_{0},\rho_{1}$ via $\bm{\tau}$. To this end, we make the following assumption:\begin{enumerate}
\item[\textbf{(A1)}] ${\rm{spt}}\left(\rho_{0}\right), {\rm{spt}}\left(\rho_{1}\right) \subseteq \mathcal{X}$.	
\end{enumerate}
In words, \textbf{(A1)} asserts that the supports of the prescribed initial and terminal PDFs are bounded (but may be open sets) in $\mathcal{X}$. For $i=0,1$, let $\sigma_{i} := \bm{\tau}\sharp\rho_{i}$ be the push-forward of $\rho_{i}$ under $\bm{\tau}$, meaning 
\begin{align}
\sigma_{i}(\bz) = \displaystyle\frac{\rho_{i}\left(\bm{\tau}^{-1}(\bz)\right)}{\big\lvert\det\left(\nabla_{\bx}\bm{\tau}\right)_{\bx = \bm{\tau}^{-1}(\bz)}\big\rvert}.
\label{rho2sigma}	
\end{align}
Since $\bm{\tau}$ is a diffeomorphism, \textbf{(A1)} implies that ${\rm{spt}}\left(\sigma_{i}\right) \subseteq \mathcal{Z}$ for $i=0,1$. In essence, \textbf{(A1)} ensures that (\ref{rho2sigma}) is well-defined. Notice from (\ref{ComputeTau}) and (\ref{DefineSetX}) that since $\bm{\tau}$, and hence $\mathcal{X}$ is determined by the vector fields $\bm{f},\bm{g}$, therefore, \textbf{(A1)} serves as a condition of compatibility for data $\bm{f},\bm{g},\rho_{0},\rho_{1}$ associated with problem (\ref{FLSdensityprob}).  

Letting 
\begin{align}
\alpha_{\bm{\tau}}(\cdot) := \alpha\circ\bm{\tau}^{-1}(\cdot), \quad \beta_{\bm{\tau}}(\cdot) := \beta\circ\bm{\tau}^{-1}(\cdot),
\label{ab}	
\end{align}
and then using $u(\bz) = \alpha_{\bm{\tau}}(\bz) + \beta_{\bm{\tau}}(\bz)v$ and (\ref{rho2sigma}), we rewrite (\ref{FLSdensityprob}) as 
\begin{subequations}
\begin{align}
&\qquad\;\underset{(\sigma,v)}{\text{inf}}
& & \int_{\mathcal{Z}} \int_{0}^{1} \frac{1} {2}   \mathcal{L}(\bz,v) \sigma(\bz,t) \: \differential t \: \differential\bz\label{ObjFluidz}\\ 
& \text{subject to} & &  \dfrac{\partial \sigma}{\partial t} + \nabla_{\bz} \cdot ((\bA\bz + \bb v )\sigma) = 0,\label{ConstrLiouvillez}\\
& & &  \sigma(\bz,0)=\sigma_{0}(\bz),  \quad \sigma(\bz,1)=\sigma_{1}(\bz),\label{ConstrPDFz}
\end{align}
\label{TransformedLOMT} 
\end{subequations}
where
\begin{align}
\mathcal{L}(\bz,v) := \left(\alpha_{\bm{\tau}}(\bz) + \beta_{\bm{\tau}}(\bz)v\right)^{2}. 
\label{integrand}  
\end{align}
The infimum in (\ref{TransformedLOMT}) is taken over all joint ``feedback linearized state" PDF and admissible (transformed) control pairs $(\sigma,v)\equiv(\sigma(\bz,t),v(\bz,t))$ satisfying (\ref{ConstrLiouvillez})-(\ref{ConstrPDFz}). The function $\sigma(\bz,t)$ is a weak solution of (\ref{ConstrLiouvillez}). Here, the set of admissible controls $\mathcal{V}:=\{v(\bz,t) \mid \bz\in\mathcal{Z}, t\in[0,1], \int_{\mathcal{Z}}v^{2}\differential\bz < \infty\}$. Problem (\ref{TransformedLOMT}) resembles optimal transport with linear prior dynamics (see e.g., \cite[equation (21)]{chen2017optimal}) with the exception that $\alpha_{\bm{\tau}}(\bz)\not\equiv 0$ and $\beta_{\bm{\tau}}(\bz)\not\equiv 1$, in general.

\begin{remark}\label{RemarkRecoverSoln}
If $(\sigma^{\rm{opt}}(\bz,t),v^{\rm{opt}}(\bz,t))$ is a solution for problem (\ref{TransformedLOMT}), then solution for problem (\ref{FLSprobMinEnergy}) (equivalently (\ref{FLSdensityprob})), denoted as $(\rho^{\rm{opt}}(\bx,t),u^{\rm{opt}}(\bx,t))$ can be recovered as
\begin{subequations}
\begin{align}
\rho^{\rm{opt}}(\bx,t) &= \sigma^{\rm{opt}}(\bm{\tau}(\bx),t)\:\lvert\det\left(\nabla_{\bx}\bm{\tau}(\bx)\right)\rvert,	\label{sigmaopt2rhoopt}\\
u^{\rm{opt}}(\bx,t) &= \alpha(\bx) + \beta(\bx) v^{\rm{opt}}(\bm{\tau}^{-1}(\bx),t), \label{vopt2uopt}
\end{align}
\label{RecoverFLSopt}	
\end{subequations}
for all $\bx\in\mathcal{X}$, $t\in[0,1]$.
\end{remark}

We now consider the feasibility of problem (\ref{TransformedLOMT}). Since $\bm{\tau}$ is a diffeomorphism, the map $\nabla\bm{\tau} : \mathcal{T}_{\bx}\mathcal{X} \mapsto \mathcal{T}_{\bz=\bm{\tau}(\bx)}\mathcal{Z}$ is an isomorphism \cite[Lemma 3.6d, pg. 55]{lee2013smooth}, and thanks to (\ref{AzbForm}), the flow  generated by (\ref{SingleInputControlAffine}) satisfying $\bx(t)\in\mathcal{X}$ implies that the flow generated by (\ref{feedbacklinearizedform}) satisfies $\bz(t)\in\mathcal{Z}$, i.e., ${\rm{spt}}(\sigma(\bz,t))\subseteq\mathcal{Z}$ for all $t\in[0,1]$ provided ${\rm{spt}}\left(\sigma_{0}\right),{\rm{spt}}\left(\sigma_{1}\right)\subseteq \mathcal{Z}$, which in turn is guaranteed due to \textbf{(A1)}. On the other hand, since the pair $(\bm{A},\bm{b})$ in (\ref{feedbacklinearizedform}) is controllable, hence any vector in $\mathcal{Z}$ is reachable from any other for $t\in[0,1]$. Therefore, in (\ref{ConstrPDFz}), the joint PDF $\sigma_{1}(\bz)$ is reachable from $\sigma_{0}(\bz)$ via the flow $\sigma(\bz,t)$ generated by the controlled Liouville PDE (\ref{ConstrLiouvillez}), which is indeed the ensemble flow associated with the trajectory flow $\bz(t)$ generated by the ODE (\ref{feedbacklinearizedform}). Thus, problem (\ref{TransformedLOMT}) is feasible. For details on the correspondence between the flow of an ODE and that generated by its associated Liouville PDE, see e.g., \cite{brockett2007optimal,halder2011dispersion,brockett2012notes}. Optimality issues are considered next.


\subsection{Optimality}\label{SubsecOptimality}
Since $\beta\neq 0$ (see Remark \ref{u2vandv2u}), hence from (\ref{ab}), $\beta_{\bm{\tau}}\neq 0$. 
Let $m:=v\sigma$ and consider the change of variable $(\sigma,v)\mapsto (\sigma,m)$, transforming (\ref{TransformedLOMT}) as
\begin{subequations}
\begin{align}
&\qquad\;\underset{(\sigma,m)}{\text{inf}}
& & \int_{\mathcal{Z}} \int_{0}^{1} J(\bz,\sigma,m)\: \differential t \: \differential\bz\label{AgainObjFluidz}\\ 
& \text{subject to} & &  \dfrac{\partial \sigma}{\partial t} + \nabla_{\bz} \cdot (\bA\bz\sigma + \bb m) = 0,\label{AgainConstrLiouvillez}\\
& & &  \sigma(\bz,0)=\sigma_{0}(\bz),  \quad \sigma(\bz,1)=\sigma_{1}(\bz),\label{AgainConstrPDFz}
\end{align}
\label{AgainTransformedLOMT} 
\end{subequations}
where 
\begin{align}
J(\bz,\sigma,m):= \begin{cases}
 \frac{1}{2}\left(\alpha_{\bm{\tau}}(\bz) + \beta_{\bm{\tau}}(\bz)\frac{m}{\sigma}\right)^{2} \sigma &\text{if}\quad\sigma >0,\\
 0 &\text{if}\quad(\sigma,m)=(0,0),\\
 +\infty &\text{otherwise}.	
 \end{cases}
\label{defJsigmam}	
\end{align}
Since the map $m \mapsto \frac{1}{2}\left(\alpha_{\bm{\tau}}(\bz) + \beta_{\bm{\tau}}(\bz)m\right)^{2}$ is convex in $m$, its perspective function $J(\bz,\sigma,m)$ is jointly convex in $(\sigma,m)$, and is lower semi-continuous. The constraints (\ref{AgainConstrLiouvillez})-(\ref{AgainConstrPDFz}) are linear in $(\sigma,m)$. Therefore, problem (\ref{AgainTransformedLOMT}) is convex in $(\sigma,m)$. Yet, the existence-uniqueness of minimizer $(\sigma^{\rm{opt}},v^{\rm{opt}})$ for (\ref{TransformedLOMT}), or equivalently for (\ref{AgainTransformedLOMT}), is not obvious. To establish the same, we proceed in three steps.

\noindent\underline{Step 1 (finding the static optimal transport problem induced by (\ref{integrand})):} Consider the cost $c : \mathcal{Z}\times\mathcal{Z}\mapsto\mathbb{R}_{\geq 0}$ given by
\begin{align}
c(\bz_0,\bz_1) := &\underset{v\in\mathcal{V}}{\inf} \displaystyle\int_{0}^{1}\frac{1}{2}\mathcal{L}\left(\bz,v\right)\:\differential t\nonumber\\
&\text{subject to}	\quad \dot{\bz}=\bA\bz + \bb v, \nonumber\\
&\qquad\qquad\quad\bz(t=0) = \bz_0, \bz(t=1)=\bz_1.
\label{staticMKcost}	
\end{align}
Thanks to the special structure for the pair $(\bA,\bb)$ from (\ref{feedbacklinearizedform}), the pseudo-inverse of $\bb$ equals $\be_{n}^{\top}$, and it follows that (\ref{staticMKcost}) can be expressed in terms of an action integral
\begin{align}
\!\!\!\!c(\bz_0,\bz_1) = &\underset{\bz(\cdot)\in\{H^{1,1}\left([0,1]\right)\mid \bz(0)=\bz_0,\bz(1)=\bz_1\}}{\inf}\!\displaystyle\int_{0}^{1}\!\widetilde{\mathcal{L}}\left(\bz(t),\dot{\bz}(t)\right)\:\differential t,
\label{ActionIntegral}	
\end{align}
where the Sobolev space $H^{1,1}\left([0,1]\right):=\{\bz(t)\in L^{1}([0,1]) \mid \text{first order weak derivative of}\;\bz(t)\in L^{1}([0,1])\}$. Clearly, $\bz\in H^{1,1}\left([0,1]\right)$ is absolutely continuous. In (\ref{ActionIntegral}), the autonomous Lagrangian $\widetilde{\mathcal{L}}:\mathcal{T}\mathcal{Z}\mapsto\mathbb{R}_{\geq 0}$, and is given by
\begin{align}
\widetilde{\mathcal{L}}\left(\bz,\dot{\bz}\right) := \frac{1}{2}\left(\alpha_{\bm{\tau}}(\bz) + \beta_{\bm{\tau}}(\bz)\langle\bm{e}_{n},\dot{\bz}\rangle\right)^{2}.
\label{TonelliLagrangian}	
\end{align}
With (\ref{ActionIntegral}), we associate a static optimal transport problem
\begin{align}
\underset{\stackrel{T:\mathcal{Z}\mapsto\mathcal{Z}}{\sigma_{1}=T\sharp\sigma_0}}{\inf}\int_{\mathcal{Z}}c\left(\bz_0,T\left(\bz_0\right)\right)\sigma_{0}(\bz_0)\differential\bz_0,
\label{OMTfromActionIntegral}	
\end{align}
and its Kantorovich (i.e., optimal coupling) version
\begin{subequations}
\begin{align}
& \qquad\;\underset{\pi}{\text{inf}}& & \!\!\!\! \int_{\mathcal{Z} \times \mathcal{Z} }  c(\bz_0,\bz_1) \pi( \: \differential\bz_{0} \: \differential \bz_{1})\\ 
& \text{subject to} & &	 \pi(\differential \bz_{0}\times \mathcal{Z})=\sigma_0(\bz_{0})\differential\bz_{0} ,\label{piconstr1}\\
& & & \pi( \mathcal{Z}\times \differential\bz_{1} )=\sigma_1(\bz_{1})\differential\bz_{1},\label{piconstr2}
\end{align}
\label{KantorovichfromActionIntegral}	
\end{subequations}
where the $\arg\inf$ in (\ref{OMTfromActionIntegral}) is the optimal transport map $T^{\rm{opt}}$, and the $\arg\inf$ in (\ref{KantorovichfromActionIntegral}) is the optimal transport plan $\pi^{\rm{opt}}$. We will soon relate the above problems with (\ref{TransformedLOMT}) or equivalently with (\ref{AgainTransformedLOMT}). Before doing so, we argue that (\ref{OMTfromActionIntegral}), (\ref{KantorovichfromActionIntegral}) admit unique solution.

From (\ref{TonelliLagrangian}), we note that $\widetilde{\mathcal{L}}(\bz,\cdot)$ is convex but not strictly convex for each $\bz\in\mathcal{Z}$. So $\widetilde{\mathcal{L}}$ is not a (weak) Tonelli Lagrangian \cite[Ch. 6.2]{figalli2007optimal}; see also \cite{bernard2007optimal}, \cite[p. 118]{villani2008optimal}. Consequently, we cannot invoke results such as \cite[Theorem 1.4.2]{figalli2007optimal} to conclude unique minimizers for (\ref{OMTfromActionIntegral}) or (\ref{KantorovichfromActionIntegral}).

For $\bz\in\mathcal{Z}\subseteq\mathbb{R}^{n}$ denote the $n$-th component of $\dot{\bz}$ as $\dot{z}_{n}$, and notice that the second argument of $\widetilde{\mathcal{L}}$ in (\ref{TonelliLagrangian}) only depends on $\dot{z}_{n}$, i.e., $\widetilde{\mathcal{L}}(\bz,\dot{\bz})\equiv\widetilde{\mathcal{L}}(\bz,\dot{z}_{n})$. As $\lim_{|\dot{z}_{n}|\rightarrow\infty}\widetilde{\mathcal{L}}(\bz,\dot{z}_{n})/|\dot{z}_{n}| = +\infty$, the Lagrangian $\widetilde{\mathcal{L}}$ is superlinear (i.e., 1-coercive) in $\dot{z}_{n}$. Also, since $\alpha,\beta$ in (\ref{ComputeAlpha}),(\ref{ComputeBeta}) are continuous (which follows from $\bm{f},\bm{g}$ being componentwise of class $C^{n+1}(\mathcal{X})$; see Sec. \ref{SubsecStochasticControlProblem}), and $\bm{\tau}$ is a diffeomorphism, both
$\widetilde{\mathcal{L}}(\bz,\dot{\bz})$ and its derivative w.r.t. $\dot{\bz}$ are continuous in $\mathcal{TZ}$. Then, by \cite[Theorem 3.7]{buttazzo1998one}, the minimizer $\bz^{{\rm{opt}}}(\cdot)$ in (\ref{ActionIntegral}) exists in $H^{1,1}([0,1])$. Having the existence in $H^{1,1}([0,1])$, it can be shown \cite[Theorem 16.8]{clarke2013functional} a posteriori that the minimizer in (\ref{ActionIntegral}) has better regularity\footnote{The conditions in \cite[Theorem 16.8]{clarke2013functional} are satisfied by our $\widetilde{\mathcal{L}}$ since it is continuous, convex in $\dot{\bz}$, has no explicit $t$-dependence, and being superlinear in $\dot{z}_{n}$, satisfies the so-called Nagumo growth condition \cite[p. 329]{clarke2013functional}.}: it is in fact Lipschitz continuous in $t\in[0,1]$. Consequently, the minimizing pair $(\bz^{{\rm{opt}}},v^{{\rm{opt}}})$ for (\ref{staticMKcost}) exists ($v^{{\rm{opt}}} = \bm{e}_{n}^{\top}(\dot{\bz}^{{\rm{opt}}}-\bm{A}\bz^{{\rm{opt}}})$). For each feasible $v\in\mathcal{V}$, the Hamiltonian $\mathcal{H}_{v} : (\bz,\bm{p})\in\mathcal{T}^{*}\mathcal{Z}\mapsto\mathbb{R}$ for (\ref{staticMKcost}) is
\[\mathcal{H}_{v}(\bz,\bm{p}) = \frac{1}{2}\left(\alpha_{\bm{\tau}}(\bz) + \beta_{\bm{\tau}}(\bz)v\right)^{2} + \sum_{i=1}^{n-1}p_{i}z_{i+1} + p_{n}v,\]
and applying Pontryagin's maximum principle, we get the two point boundary value problem associated with Hamiltonian vector field 
\begin{align*}
\dot{\bz}^{\rm{opt}} &= \left(z_2, z_3, z_{4}, \hdots, z_{n}, -\frac{p_{n}}{\beta_{\bm{\tau}}^{2}}-\frac{\alpha_{\bm{\tau}}}{\beta_{\bm{\tau}}}\right)^{\!\!\top}\bigg\vert_{(\bz^{\rm{opt}},\bm{p}^{\rm{opt}})},\\
\dot{\bm{p}}^{\rm{opt}} &= \frac{p_{n}^{2}}{\beta_{\bm{\tau}}^{3}}\nabla_{\bz}\beta_{\bm{\tau}} + \begin{pmatrix}
 	0, p_{1}, p_{2}, \hdots, p_{n-2}, p_{n-1}
 \end{pmatrix}^{\!\!\top}\bigg\vert_{(\bz^{\rm{opt}},\bm{p}^{\rm{opt}})},
\end{align*}
with boundary conditions $\bz^{\rm{opt}}(0)=\bz_{0},\bz^{\rm{opt}}(1)=\bz_{1}$, resulting in the optimal cost $c(\bz_0,\bz_1)=\frac{1}{2}\int_{0}^{1}\left(\frac{p_{n}^{\rm{opt}}(t)}{\beta_{\bm{\tau}}(\bz^{\rm{opt}}(t))}\right)^{2} \differential t$. Since $\mathcal{H}_{v}$ is $C^2$, and the Hamiltonian vector field is complete in $\mathcal{T}^{*}\mathcal{Z}$, the conditions in \cite[Theorems 5.3, 5.7, 4.1]{agrachev2009optimal} are satisfied, and allow us to conclude the existence and uniqueness of the optimal transport map $T^{\rm{opt}}$ in (\ref{OMTfromActionIntegral}), and that of the optimal transport plan $\pi^{\rm{opt}}$ in (\ref{KantorovichfromActionIntegral})--the latter being concentrated on the graph of $T^{\rm{opt}}$.


\noindent\underline{Step 2 (lower bounding (\ref{TransformedLOMT})):} Denote the objective in (\ref{ObjFluidz}) by $\Xi(\sigma,v)$, and the optimal value of (\ref{OMTfromActionIntegral}) (or equivalently of (\ref{KantorovichfromActionIntegral})) as ${\wp}^{{\rm{opt}}}(\sigma_0,\sigma_1)$. For any feasible $v\in\mathcal{V}$, let $\mathcal{S}_{01}$ be the set of all PDF trajectories $\sigma(\bz,t)$ with fixed endpoints $\sigma_{0}$ and $\sigma_{1}$, i.e., 
\begin{align}
\mathcal{S}_{01}:=\{\sigma(\bz,t)\mid\sigma\geq 0,\int_{\mathcal{Z}}\sigma\differential\bz = 1, \; \text{(\ref{ConstrPDFz}) holds}\}. 
\label{defP01}	
\end{align}
Let $\mathcal{P}_{01}:=\{\sigma(\bz,t)\differential\bz\mid t\in[0,1],\sigma(\bz,t)\in\mathcal{S}_{01}\}$ denote the collection of probability measures on the path space $C[0,1]$ induced by the controlled sample path $\bz(\cdot)$ whose marginals at $t=0,1$ are $\sigma_{0}\differential\bz,\sigma_{1}\differential\bz$, respectively. Likewise, let $\mathcal{D}_{01}$ be the collection of probability measures on $C[0,1]$ paths of the controlled state $\bz(\cdot)$ whose marginals at $t=0,1$ are Dirac deltas $\delta_{\bz_{0}},\delta_{\bz_{1}}$, concentrated at $\bz_{0},\bz_{1}$, respectively. Then, the probability measure on $\{\bz(\cdot)\in H^{1,1}([0,1])\mid \bz(0)=\bz_{0},\bz(1)=\bz_{1}\}$ concentrated on the optimal path $\bz^{{\rm{opt}}}(\cdot)$ in (\ref{ActionIntegral}), solves 
\begin{align}
\underset{D_{01}\in\mathcal{D}_{01}}{\inf} \mathbb{E}_{D_{01}}\bigg\{\displaystyle\int_{0}^{1}\widetilde{\mathcal{L}}\left(\bz(t),\dot{\bz}(t)\right)\:\differential t\bigg\}.
\label{DiracCOV}	
\end{align}
Since any $P_{01}\in\mathcal{P}_{01}$ allows the disintegration of measure: $P_{01} = \int_{\mathcal{Z} \times \mathcal{Z}}D_{01}\:\pi\left(\differential\bz_{0}\differential\bz_{1}\right)$, where $\pi$ satisfies (\ref{piconstr1})-(\ref{piconstr2}), we have
\begin{align*}
\underset{(\sigma,v)\in\mathcal{S}_{01}\times\mathcal{V}}{\inf}\Xi(\sigma,v) &\geq \mathbb{E}_{P_{01}}\bigg\{\displaystyle\int_{0}^{1}\widetilde{\mathcal{L}}\left(\bz(t),\dot{\bz}(t)\right)\:\differential t\bigg\}\nonumber\\
&= \displaystyle\int_{\mathcal{Z}\times\mathcal{Z}}\!\!\mathbb{E}_{D_{01}}\bigg\{\!\!\displaystyle\int_{0}^{1}\!\!\widetilde{\mathcal{L}}\left(\bz(t),\dot{\bz}(t)\right)\:\differential t\bigg\}\pi\left(\differential\bz_{0}\differential\bz_{1}\right)	 \nonumber\\
&\geq \displaystyle\int_{\mathcal{Z}\times\mathcal{Z}} c(\bz_{0},\bz_{1}) \pi\left(\differential\bz_{0}\differential\bz_{1}\right).
\end{align*}
Since the above holds for any feasible transport plan $\pi$, we deduce 
\begin{align}
	\underset{(\sigma,v)\in\mathcal{S}_{01}\times\mathcal{V}}{\inf}\Xi(\sigma,v) \geq {\wp}^{{\rm{opt}}}(\sigma_0,\sigma_1).
\label{LOWERbound}	
\end{align}
From Step 1, we know that ${\wp}^{{\rm{opt}}}$ is finite, and is achieved by a unique optimal transport map $T^{\rm{opt}}:\mathcal{Z}\mapsto\mathcal{Z}$.

\noindent\underline{Step 3 (achieving equality in (\ref{LOWERbound})):} For $t\in[0,1]$, the map $T^{\rm{opt}}_{t}(\bz):=(1-t)\bz + tT^{\rm{opt}}(\bz)$ induces a displacement interpolation $\sigma_{t}:=T^{\rm{opt}}_{t}(\bz)\sharp\sigma_{0}$. Let ${\rm{Id}}$ denote the identity map, and associate a vector field $\bm{v}_{t}\in L^{2}(L^{\infty}(\mathcal{Z});[0,1])$ given by $\bm{v}_{t}:=(T^{\rm{opt}} - {\rm{Id}})\circ \left(T^{\rm{opt}}_{t}\right)^{-1}$, defined almost everywhere w.r.t. $\sigma_t\differential\bz$ with the convention that $\sigma_{t}=0$ whenever $\bm{v}_{t}=0$. As is well-known from the theory of optimal mass transport \cite{figalli2010mass}, the map $T^{\rm{opt}}$ is the exponential map of the gradient of a convex function, hence $T^{\rm{opt}}_{t}(\bz)$ is injective, and the vector field $\bm{v}_{t}$ is well-defined. Then, proceeding as in \cite[p. 242]{villani2003topics}, the pair $(\sigma_t,\bm{v}_t)$ solves the continuity equation $\frac{\partial\sigma_t}{\partial t} + \nabla_{\bz}\cdot(\sigma_t\bm{v}_t) = 0$ in the weak sense. Equivalently, the pair $(\sigma := \sigma_t, v := \bm{e}_{n}^{\top}(\bm{v}_t - \bm{Az}))$ solves (\ref{ConstrLiouvillez}) in the weak sense, satisfies (\ref{ConstrPDFz}), and $\Xi(\sigma,v) = \int_{\mathcal{Z}}\int_{0}^{1}\frac{1}{2}\mathcal{L}(\bz,v)\differential t\sigma\differential\bz = \int_{\mathcal{Z}} \sigma_{0}(\bz)c(\bz,T^{\rm{opt}}(\bz))\differential\bz = \wp^{\rm{opt}}(\sigma_0,\sigma_1)$. This guarantees existence-uniqueness of a pair $(\sigma,v)$ that achieves equality in (\ref{LOWERbound}). We refer to this pair as $(\sigma^{\rm{opt}},v^{\rm{opt}})$.

We next consider the Lagrangian associated with (\ref{TransformedLOMT}) given by
\begin{align}
\mathscr{L}(\sigma,\psi,v) := &\int_{\mathcal{Z}} \int_{0}^{1}  \bigg\{\frac{1}{2} \mathcal{L}(\bz,v) \sigma(\bz,t)  + \psi(\bz,t)  							\times\nonumber\\
                                            & \bigg( \dfrac{\partial \sigma}{\partial t} +\nabla_{\bz} \cdot\left((\bA \bz +  \bb v )\sigma\right) \bigg) 							\differential t \: \differential\bz\bigg\},
\label{Lagrangian}
\end{align}
where $\psi\left(\bz,t\right)$ is the Lagrange multiplier or costate. Then the optimal control $v^{\rm{opt}}(\bz,t)$ in (\ref{TransformedLOMT}) can be obtained by performing the unconstrained minimization of (\ref{Lagrangian}) over $(\sigma,v)\in\mathcal{S}_{01}\times\mathcal{V}$. We summarize the result in the Theorem below.

\begin{theorem}\label{ThmHJBopt}(\textbf{Optimal control for (\ref{TransformedLOMT})})
For $\bz\in\mathcal{Z}$ and $t\in[0,1]$, the optimal control $v^{\rm{opt}}(\bz,t)$ in (\ref{TransformedLOMT}) is
\begin{align}
v^{\rm{opt}}(\bz,t) = \frac{1}{\beta_{\bm{\tau}}^{2}(\bz)}\frac{\partial}{\partial z_{n}} \psi(\bz,t) - \frac{\alpha_{\bm{\tau}}(\bm{z})}{\beta_{\bm{\tau}}(\bm{z})},
\label{voptOptimal}	 
\end{align}
where $\psi(\bm{z},t)$ solves the Hamilton-Jacobi-Bellman (HJB) PDE
\begin{align}
\!\!\!\frac{\partial\psi}{\partial t} + \!\displaystyle\sum_{i=1}^{n-1}\!z_{i+1}\frac{\partial\psi}{\partial z_{i}} - \frac{\alpha_{\bm{\tau}}(\bm{z})}{\beta_{\bm{\tau}}(\bm{z})}\frac{\partial\psi}{\partial z_{n}} + \frac{1}{2\beta_{\bm{\tau}}^{2}(\bz)}\!\left(\!\frac{\partial\psi}{\partial z_{n}}\!\right)^{\!\!2}\!\!=\! 0,
\label{HJBoptimal}	
\end{align}
subject to suitable terminal condition $\psi(\bz,1)\equiv\psi_{1}(\bz)$.	
\end{theorem}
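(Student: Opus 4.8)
The plan is to run the standard calculus of variations on the Lagrangian (\ref{Lagrangian}) to extract (\ref{voptOptimal})--(\ref{HJBoptimal}) as the first-order stationarity conditions, and then to certify that these conditions genuinely identify the minimizer by invoking the existence and uniqueness of $(\sigma^{\rm{opt}},v^{\rm{opt}})$ established in Steps 1--3, the lower bound (\ref{LOWERbound}) together with its attainment in Step 3, and the joint convexity of (\ref{AgainTransformedLOMT}) in $(\sigma,m)$, which makes stationarity also sufficient.

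First I would integrate by parts in (\ref{Lagrangian}). Since ${\rm{spt}}(\sigma(\cdot,t))$ stays a bounded subset of $\mathcal{Z}$ for every $t\in[0,1]$ (Section \ref{SubsecReformulation}), the spatial divergence term produces no boundary flux, while the $\partial\sigma/\partial t$ term produces the endpoint contribution $\int_{\mathcal{Z}}\big(\psi(\bz,1)\sigma_{1}(\bz)-\psi(\bz,0)\sigma_{0}(\bz)\big)\,\differential\bz$, so that
\begin{align*}
\mathscr{L}(\sigma,\psi,v) = &\int_{\mathcal{Z}}\big(\psi(\bz,1)\sigma_{1}-\psi(\bz,0)\sigma_{0}\big)\,\differential\bz\\
&+\int_{\mathcal{Z}}\!\int_{0}^{1}\!\sigma\,\Big(\frac{1}{2}\mathcal{L}(\bz,v)-\frac{\partial\psi}{\partial t}-\langle\nabla_{\bz}\psi,\bA\bz+\bb v\rangle\Big)\differential t\,\differential\bz .
\end{align*}
Using $\bb=\bm{e}_{n}$, so that $\langle\nabla_{\bz}\psi,\bb v\rangle=v\,\partial\psi/\partial z_{n}$, and $\bA\bz=(z_{2},\dots,z_{n},0)^{\top}$ by (\ref{feedbacklinearizedform}), so that $\langle\nabla_{\bz}\psi,\bA\bz\rangle=\sum_{i=1}^{n-1}z_{i+1}\,\partial\psi/\partial z_{i}$, I would minimize the integrand pointwise in $(\bz,t)$ over $v\in\mathbb{R}$. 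This is a strictly convex quadratic in $v$ because $\beta_{\bm{\tau}}\neq 0$ (Remark \ref{u2vandv2u} and (\ref{ab})), and its stationarity condition $\beta_{\bm{\tau}}(\alpha_{\bm{\tau}}+\beta_{\bm{\tau}}v)=\partial\psi/\partial z_{n}$ is precisely (\ref{voptOptimal}). Substituting $v^{\rm{opt}}$ back and using $\alpha_{\bm{\tau}}+\beta_{\bm{\tau}}v^{\rm{opt}}=\beta_{\bm{\tau}}^{-1}\,\partial\psi/\partial z_{n}$ collapses the bracket multiplying $\sigma$ to $-\frac{\partial\psi}{\partial t}-\sum_{i=1}^{n-1}z_{i+1}\frac{\partial\psi}{\partial z_{i}}+\frac{\alpha_{\bm{\tau}}}{\beta_{\bm{\tau}}}\frac{\partial\psi}{\partial z_{n}}-\frac{1}{2\beta_{\bm{\tau}}^{2}}\left(\frac{\partial\psi}{\partial z_{n}}\right)^{2}$.

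Because the continuity equation (\ref{ConstrLiouvillez}) is conservative, the endpoint data (\ref{ConstrPDFz}) leave $\sigma$ otherwise free and nonnegative on $\mathcal{Z}\times(0,1)$, so the resulting $\sigma$-linear functional has finite infimum over such $\sigma$ only if the above bracket vanishes identically; multiplying by $-1$, this is exactly (\ref{HJBoptimal}), and then $\mathscr{L}$ reduces to $\int_{\mathcal{Z}}\big(\psi_{1}\sigma_{1}-\psi(\bz,0)\sigma_{0}\big)\,\differential\bz$, a quantity fixed by the data once $\psi(\bz,1)=\psi_{1}(\bz)$ is prescribed. The ``suitable'' terminal datum $\psi_{1}$ is the one for which this quantity equals ${\wp}^{{\rm{opt}}}(\sigma_{0},\sigma_{1})$ and for which the flow of (\ref{voptOptimal}) reproduces the displacement interpolation $\sigma^{\rm{opt}}$ built in Step 3; concretely, $\psi(\bz,1)$ is a Kantorovich potential for (\ref{KantorovichfromActionIntegral}) and $\psi(\bz,t)$ is its action/value-function propagation backward from $t=1$.

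I expect the main obstacle to be the rigor of this last step: justifying that minimizing over $\sigma$ forces the HJB coefficient to vanish (handling the positivity of $\sigma$, the merely distributional sense of (\ref{ConstrLiouvillez}), and the absence of an independent per-time normalization multiplier), pinning down $\psi_{1}$, and controlling the regularity of the possibly viscosity-type solution $\psi$ of (\ref{HJBoptimal}). However, since Steps 1--3 already deliver a unique $(\sigma^{\rm{opt}},v^{\rm{opt}})$ and the equality in (\ref{LOWERbound}), the theorem is in essence a characterization, so it is enough to verify that the pair produced by (\ref{voptOptimal})--(\ref{HJBoptimal}) coincides with that minimizer --- which the convexity of (\ref{AgainTransformedLOMT}) makes automatic --- leaving only the routine variational bookkeeping sketched above.
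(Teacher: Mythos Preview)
Your approach is essentially the same as the paper's: integrate by parts in the Lagrangian (\ref{Lagrangian}) to isolate the pointwise integrand $\frac{1}{2}\mathcal{L}(\bz,v)-\partial_{t}\psi-\langle\nabla_{\bz}\psi,\bA\bz+\bb v\rangle$ multiplied by $\sigma$, minimize pointwise in $v$ to obtain (\ref{voptOptimal}), and then read off (\ref{HJBoptimal}) as the condition that the remaining $\sigma$-coefficient vanish. The one substantive difference is in how the ``main obstacle'' you identify is discharged. The paper does not lean on Steps~1--3 or on the convexity of (\ref{AgainTransformedLOMT}) for sufficiency; instead it invokes an infinite-dimensional Lagrange multiplier theorem (Zeidler, \cite[Ch.~4.14, Prop.~1]{zeidler1995applied}) directly, setting up the problem on $X=\overline{\mathcal{S}}_{01}\times L^{2}(\mathcal{Z}\times[0,1])$ with constraint map $G:X\to Y=L^{2}(H^{-1}(\mathcal{Z});[0,1])$, and verifying surjectivity of $G'_{\sigma}$ and $G'_{v}$ at the optimum to guarantee existence of a multiplier $\psi\in Y^{*}=L^{2}(H^{1}_{0}(\mathcal{Z});[0,1])$ that licenses the pointwise minimization. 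Your route---treating the theorem as a characterization certified a posteriori by the uniqueness and attained lower bound from Steps~1--3---is a legitimate alternative and arguably cleaner conceptually, but the paper's route gives the multiplier $\psi$ concrete Sobolev regularity rather than leaving it as a formal object, which is what feeds the subsequent envelope and characteristic representations in Sections~\ref{subsubEnv}--\ref{subsubChar}.
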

\begin{proof}
See Appendix \ref{appendix:deriveHJB}.	
\end{proof}
\noindent For a generalization of the above result in the vector input case, we refer the readers to \cite[Theorem 1]{caluya2019finite}.
\begin{remark}(\textbf{Optimal controlled PDF for (\ref{TransformedLOMT})})
Given a solution $\psi(\bz,t)$ of (\ref{HJBoptimal}), and $v^{\rm{opt}}(\bz,t)$ as in (\ref{voptOptimal}), if the solution of the Liouville PDE initial value problem
\begin{subequations}
	\begin{align}
\dfrac{\partial \sigma^{\rm{opt}}}{\partial t} + \nabla_{\bz} \cdot ((\bA\bz + \bb v^{\rm{opt}}(\bz,t) )\sigma^{\rm{opt}}) = 0, \label{LiouvilleOptimalPDE}\\ 
\sigma^{\rm{opt}}(\bz,0)=\sigma_{0}(\bz),\label{LiouvilleOptimalIC}	
\end{align}
\label{LiouvilleOptimalIVP}
\end{subequations}
satisfies $\sigma^{\rm{opt}}(\bz,1)=\sigma_{1}(\bz)$, then the pair $(\sigma^{\rm{opt}},v^{\rm{opt}})$ solves problem (\ref{TransformedLOMT}).	
\end{remark}

In general, the HJB PDE may only admit viscosity solution \cite{crandall1983viscosity,fleming2006controlled}. However, noting that (\ref{HJBoptimal}) is of the specific form 
\begin{align}
\frac{\partial\psi}{\partial t} + \mathcal{H}\left(\bz,\nabla_{\bz}\psi\right)=0,
\label{HJBform}	
\end{align}
wherein for $\bz,\bm{\zeta}\in\mathbb{R}^{n}$, the state-dependent Hamiltonian 
\begin{align}
\!\!\mathcal{H}\left(\bz,\bm{\zeta}\right)\!:=\!\!\displaystyle\sum_{i=1}^{n-1}\!z_{i+1}\zeta_{i} - \frac{\alpha_{\bm{\tau}}(\bm{z})}{\beta_{\bm{\tau}}(\bm{z})}\zeta_{n} + \frac{1}{2\beta_{\bm{\tau}}^{2}(\bz)}\zeta_{n}^{2},
\label{HJBHamiltonian}	
\end{align}
we aim to find representation formula for its solution. 

\subsubsection{Envelope representation for $\psi$}\label{subsubEnv}  
Theorem \ref{ThmHJBoptHopfLax} below gives the so-called ``upper and lower envelope representation formula" for $\psi(\bz,t)$ at the expense of additional assumptions on the vector fields $\bm{f},\bm{g}$. Such formula, when possible to derive, allow representing $\psi$ as pointwise $\inf$ (for upper envelope), or pointwise $\sup$ (for lower envelope) of elementary functions, thereby bypassing the general but computationally challenging viscosity approach (regularizing the HJB PDE by the Laplacian $\Delta\psi$, then passing to the limit, see e.g., \cite[Section 10.1]{evans1998graduate}). In the special case of \emph{state-independent} Hamiltonian (i.e., $\mathcal{H}(\bm{z},\bm{\zeta})\equiv\mathcal{H}(\bm{\zeta})$), such variational representations reduce to the well-known Hopf-Lax formula \cite{hopf1965generalized,lax1957hyperbolic}. That the state-dependent Hamiltonian such as (\ref{HJBHamiltonian}) may still be amenable for envelope representation using convex conjugates \cite{rockafellar2000convexity}, seem to be less known in the control community. The following result builds on Appendix \ref{appendix:EnvelopeFormula}.
 
\begin{theorem}\label{ThmHJBoptHopfLax}(\textbf{Envelope formula for the solution of (\ref{HJBoptimal})})
Given the vector fields $\bm{f},\bm{g}$ in (\ref{FLSprobMinEnergy}) (or equivalently in (\ref{FLSdensityprob})), let the maps $\alpha_{\bm{\tau}}(\cdot),\beta_{\bm{\tau}}(\cdot)$ be given by (\ref{alphabetatau}) and (\ref{ab}). Suppose that $\bm{f},\bm{g}$ are such that (\ref{HJBHamiltonian}) is concave in $\bz\in\mathcal{Z}$. Let 
\begin{align}
\bm{a}(\bz):=\bA\bz + L_{\bm{f}}^{n}\lambda(\bm{\tau}^{-1}(\bz))\bb,
\label{defa}	
\end{align}
and
\begin{align}
\ell(\bz,\bm{w}) := \begin{cases}
\!\!\frac{\left(\bm{w}-\bm{a}(\bz)\right)^{\top}\left(\bb\bb^{\top}\right)^{\dagger}\left(\bm{w}-\bm{a}(\bz)\right)}{2\left(L_{\bm{g}}L_{\bm{f}}^{n-1}\lambda(\bm{\tau}^{-1}(\bz))\right)^{2}}, \!\!&\text{if}\;\bm{w}-\bm{a}(\bm{z})\in\mathcal{R}\left(\bb\bb^{\top}\right),\\
\!+\infty, &\text{otherwise,} 	
 \end{cases}
\label{TrueLagrangian}	
\end{align}
where $^{\dagger}$ denotes the pseudo-inverse, $\mathcal{R}(\cdot)$ denotes the range. Let the dualizing Kernel $K$ associated with (\ref{TrueLagrangian}) be given by
\begin{align}
	\!\!K(t,\bz,\bm{r})\! := \underset{\bm{q}(t)}{\inf}\bigg\{\!\langle\bm{q}(0),\bm{r}\rangle \!+ \!\!\int_{0}^{t}\!\!\!\ell\left(\bm{q}(t),\dot{\bm{q}}(t)\right)\differential t \mid \bm{q}(t)=\bz\!\bigg\},
\label{defK}	
\end{align}
with $K(0,\bz,\bm{r}):= \langle\bz,\bm{r}\rangle$. Then the upper and lower envelope representation for $\psi(\bz,t)$ in (\ref{HJBform}) subject to the initial condition $\psi(\bz,0)=\psi_{0}(\bz)$, are given by the formula (\ref{GeneralUpperEnvelope}) and (\ref{GeneralLowerEnvelope}) in Appendix \ref{appendix:EnvelopeFormula}, respectively.
\end{theorem}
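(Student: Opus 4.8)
The plan is to exhibit (\ref{HJBoptimal})--(\ref{HJBHamiltonian}) as a particular instance of the state-dependent Hamilton--Jacobi equation treated in Appendix \ref{appendix:EnvelopeFormula}, and then verify that its data $(\mathcal{H},\ell,K,\psi_{0})$ fulfill the hypotheses under which the envelope formulas (\ref{GeneralUpperEnvelope})--(\ref{GeneralLowerEnvelope}) hold. Concretely the argument has three ingredients: (i) bring $\mathcal{H}$ in (\ref{HJBHamiltonian}) into a ``drift-plus-quadratic'' normal form; (ii) identify the fibrewise Legendre--Fenchel conjugate of $\mathcal{H}$ as the Lagrangian $\ell$ in (\ref{TrueLagrangian}); (iii) check the convexity in $\bm{\zeta}$, the concavity in $\bz$, and the coercivity/finiteness conditions that the appendix requires.

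First I would substitute the explicit expressions $\alpha_{\bm{\tau}} = -\big(L_{\bm{f}}^{n}\lambda\circ\bm{\tau}^{-1}\big)\,\beta_{\bm{\tau}}$ and $\beta_{\bm{\tau}} = 1/\big(L_{\bm{g}}L_{\bm{f}}^{n-1}\lambda\circ\bm{\tau}^{-1}\big)$, read off from (\ref{alphabetatau})--(\ref{ab}), into (\ref{HJBHamiltonian}). Since $\bA\bz=(z_{2},\hdots,z_{n},0)^{\top}$ and $\bb=\be_{n}$, the part of $\mathcal{H}$ linear in $\bm{\zeta}$, namely $\sum_{i=1}^{n-1}z_{i+1}\zeta_{i}-(\alpha_{\bm{\tau}}/\beta_{\bm{\tau}})\zeta_{n}$, collapses to $\langle\bm{a}(\bz),\bm{\zeta}\rangle$ with $\bm{a}$ exactly as in (\ref{defa}), while the remaining term is $\tfrac{1}{2}\big(L_{\bm{g}}L_{\bm{f}}^{n-1}\lambda\circ\bm{\tau}^{-1}(\bz)\big)^{2}\langle\be_{n},\bm{\zeta}\rangle^{2}$. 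Hence $\mathcal{H}(\bz,\bm{\zeta})$ is affine in $\bm{\zeta}$ plus a nonnegative multiple of $\zeta_{n}^{2}$, so it is automatically convex and lower semicontinuous in $\bm{\zeta}$; the only genuine restriction contributed by the stated hypothesis on $\bm{f},\bm{g}$ is the concavity of $\bz\mapsto\mathcal{H}(\bz,\bm{\zeta})$.

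Next I would compute $\ell(\bz,\cdot):=\mathcal{H}(\bz,\cdot)^{*}$. Because only $\zeta_{n}$ is penalized quadratically, the supremum defining the conjugate is finite only when $\bm{w}-\bm{a}(\bz)\in\mathrm{span}(\be_{n})=\mathcal{R}(\bb\bb^{\top})$, and there it equals the scalar quadratic $\big(w_{n}-a_{n}(\bz)\big)^{2}\big/\big(2(L_{\bm{g}}L_{\bm{f}}^{n-1}\lambda\circ\bm{\tau}^{-1}(\bz))^{2}\big)$; since $(\bb\bb^{\top})^{\dagger}=\be_{n}\be_{n}^{\top}$ this is precisely (\ref{TrueLagrangian}). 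I would then record that $\ell(\bz,\cdot)$ is convex, l.s.c., and $1$-coercive on the affine fibre $\bm{a}(\bz)+\mathcal{R}(\bb\bb^{\top})$, that $\ell(\bz,\cdot)^{*}=\mathcal{H}(\bz,\cdot)$ by Fenchel--Moreau, and --- via the pointwise-supremum-of-convex-functions argument applied to $\bm{w}\mapsto\langle\bm{w},\bm{\zeta}\rangle-\mathcal{H}(\bz,\bm{\zeta})$ --- that concavity of $\mathcal{H}(\cdot,\bm{\zeta})$ forces $\bz\mapsto\ell(\bz,\bm{w})$ to be convex. This last point is what renders the configuration-space action $\int_{0}^{t}\ell(\bm{q},\dot{\bm{q}})\,\differential t$ in (\ref{defK}) convex along $\bm{q}(\cdot)$ and legitimizes the dualizing kernel $K$.

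Finally I would feed $(\mathcal{H},\ell,K,\psi_{0})$ into the general statement of Appendix \ref{appendix:EnvelopeFormula}, whose hypotheses have now all been verified, to obtain the upper envelope (\ref{GeneralUpperEnvelope}) and lower envelope (\ref{GeneralLowerEnvelope}) for $\psi(\bz,t)$. The main obstacle, as flagged just before (\ref{TonelliLagrangian}), is the degeneracy of the geometry: $\ell(\bz,\cdot)$ is $+\infty$ off a one-dimensional affine fibre, so $\widetilde{\mathcal{L}}$ is non-Tonelli and no off-the-shelf Hopf--Lax regularity applies. The work therefore lies in confirming that $K$ in (\ref{defK}) is still finite and attained --- which it is, because the chain-of-integrators pair $(\bA,\bb)$ is controllable, so admissible connecting arcs with finite action always exist --- and that the appendix's hypotheses were formulated to accommodate this control-affine degeneracy, with the concavity-in-$\bz$ assumption on $\mathcal{H}$ being exactly what makes the two envelopes coincide with the (viscosity) solution $\psi$.
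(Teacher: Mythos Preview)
Your approach is essentially the paper's: rewrite $\mathcal{H}$ as $\langle\bm{a}(\bz),\bm{\zeta}\rangle + \tfrac{1}{2}\big(L_{\bm{g}}L_{\bm{f}}^{n-1}\lambda\circ\bm{\tau}^{-1}(\bz)\big)^{2}\zeta_{n}^{2}$, note convexity in $\bm{\zeta}$ and (assumed) concavity in $\bz$, compute the fibrewise conjugate to recover (\ref{TrueLagrangian}), and invoke the Rockafellar--Wolenski theorem of Appendix~\ref{appendix:EnvelopeFormula}. The gap is that you never verify the two growth hypotheses (H1)--(H2) of that theorem, yet assert ``whose hypotheses have now all been verified.'' The properties you do record --- $1$-coercivity of $\ell(\bz,\cdot)$ on the affine fibre $\bm{a}(\bz)+\mathcal{R}(\bb\bb^{\top})$, convexity of $\bz\mapsto\ell(\bz,\bm{w})$, and finiteness of $K$ via controllability of $(\bA,\bb)$ --- are correct observations but are not what (H1)--(H2) ask for; those are two-sided bounds of the shape $\lvert\mathcal{H}(\bz,\bm{\zeta})\rvert \le \varphi(\bm{\zeta}) + (a\lVert\bm{\zeta}\rVert_{2}+b)\lVert\bz\rVert_{2}$.

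The paper closes this gap by arguing that $\gamma=-\alpha/\beta=L_{\bm{f}}^{n}\lambda$ and $\delta=1/\beta=L_{\bm{g}}L_{\bm{f}}^{n-1}\lambda$ (cf.\ Remark~\ref{u2vandv2u}) are bounded on $\mathcal{X}$, so there exist $c_{1},c_{2}>0$ with $\lvert L_{\bm{f}}^{n}\lambda\circ\bm{\tau}^{-1}\rvert\le c_{1}$ and $\tfrac{1}{2}(L_{\bm{g}}L_{\bm{f}}^{n-1}\lambda\circ\bm{\tau}^{-1})^{2}\le c_{2}$; combined with $\lVert\bA\rVert_{2}=\lVert\bb\rVert_{2}=1$ this yields $\lvert\mathcal{H}(\bz,\bm{\zeta})\rvert \le \lVert\bz\rVert_{2}\lVert\bm{\zeta}\rVert_{2} + c_{1}\lVert\bm{\zeta}\rVert_{2} + c_{2}\lVert\bm{\zeta}\rVert_{2}^{2}$, from which (H1) holds with $a=1$, $b=0$, $\varphi(\bm{\zeta})=c_{1}\lVert\bm{\zeta}\rVert_{2}+c_{2}\lVert\bm{\zeta}\rVert_{2}^{2}$, and (H2) follows by unpacking the absolute value. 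Without such a uniform bound, the $\bz$-dependent coefficient of $\zeta_{n}^{2}$ could blow up and (H1) need not hold, so this step cannot be skipped.
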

\begin{proof}
See Appendix \ref{appendix:DeriveEnvelope}.	
\end{proof}

\subsubsection{Characteristic representation for $\psi$}\label{subsubChar} 
A different representation for $\psi(\bz,t)$ can be obtained by deriving the characteristic ODEs associated with the first order nonlinear PDE (\ref{HJBform}), as summarized next.

\begin{theorem}\label{ThmJHBoptMOC}(\textbf{Characteristic equations for (\ref{HJBoptimal})})
Consider the pair $(\bA,\bb)$ as in (\ref{feedbacklinearizedform}), and let $\theta:=\frac{\partial\psi}{\partial t}$. The characteristic equations governing the evolution of the parametric strips\footnote{In the case of quasilinear PDE, the characteristic equations describe characteristic ``curves". For a nonlinear PDE like (\ref{HJBform}), the terminology characteristic ``strips" is used since equations (\ref{dzetads}) and (\ref{dthetads}), appearing because of the nonlinearity in the PDE, describe evolution of elements on tangent space.} $\left(t(s),\bz(s),\bm{\zeta}(s),\theta(s),\psi(s)\right)$ associated with (\ref{HJBform})-(\ref{HJBHamiltonian}) are:
\begin{subequations} 
    \begin{align}
      \frac{\differential t}{\differential s} & = 1, \label{dtds}\\
       \frac{\differential \bm{z}}{\differential s} &=  \nabla_{\bm{\zeta}} \mathcal{H}(\bm{z},\bm{\zeta}) = \bA \bm{z} + \frac{\bb\bb^{\top} \bm{\zeta}}{\beta_{\bm{\tau}}^2(\bz)} -\frac{\alpha_{\bm{\tau}}(\bz)}{\beta_{\bm{\tau}}(\bz)} \bb, \label{dzds}\\
        \frac{\differential \bm{\zeta}}{\differential s} &=  -\nabla_{\bm{z}} \mathcal{H}(\bm{z},\bm{\zeta}) = -\bA^{\top} \bm{\zeta}+ \frac{\bb^{\top} \bm{\zeta}   }{\beta_{\bm{\tau}}^2(\bz)}\bigg(\!\beta_{\bm{\tau}}(\bz)\nabla_{\bz} \alpha_{\bm{\tau}}(\bz)  \nonumber\\
        &- \alpha_{\bm{\tau}}(\bz) \nabla_{\bz} \beta_{\bm{\tau}}(\bz) \!\bigg) + \frac{(\bb^{\top} \bm{\zeta})^2}{\beta_{\bm{\tau}}^3(\bz)} \nabla_{\bz} \beta_{\bm{\tau}}(\bz),  \label{dzetads}\\
        \frac{\differential \theta} {\differential s} & = 0, \label{dthetads}\\
        \frac{\differential \psi} {\differential s} &=  \langle\nabla_{\bm{\zeta}}\mathcal{H}(\bm{z},\bm{\zeta}),\bm{\zeta}\rangle - \mathcal{H}(\bm{z},\bm{\zeta}) = \frac{(\bb^{\top} \bm{\zeta})^2}{2\beta_{\bm{\tau}}^2(\bz)}. \label{dpsids}      
    \end{align}
    \label{CharODE}
\end{subequations}
\end{theorem}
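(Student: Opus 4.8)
The plan is to derive the characteristic ODEs for the first-order nonlinear PDE $\partial_t\psi + \mathcal{H}(\bz,\nabla_\bz\psi) = 0$ by the standard method of characteristics for fully nonlinear first-order PDEs, specialized to the concrete Hamiltonian $\mathcal{H}$ in (\ref{HJBHamiltonian}). Writing the PDE in the form $F(t,\bz,\psi,\theta,\bm{\zeta}) = 0$ with $\theta := \partial_t\psi$, $\bm{\zeta} := \nabla_\bz\psi$, and $F := \theta + \mathcal{H}(\bz,\bm{\zeta})$, the classical Charpit--Lagrange system reads $\frac{\differential t}{\differential s} = F_\theta$, $\frac{\differential\bz}{\differential s} = \nabla_{\bm{\zeta}}F$, $\frac{\differential\theta}{\differential s} = -(F_t + \theta F_\psi)$, $\frac{\differential\bm{\zeta}}{\differential s} = -(\nabla_\bz F + \bm{\zeta}F_\psi)$, and $\frac{\differential\psi}{\differential s} = \theta F_\theta + \langle\bm{\zeta},\nabla_{\bm{\zeta}}F\rangle$. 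First I would instantiate each right-hand side: since $F_\theta = 1$ we get (\ref{dtds}); since $F$ has no explicit $t$- or $\psi$-dependence, $F_t = F_\psi = 0$, which immediately yields $\frac{\differential\theta}{\differential s} = 0$, i.e. (\ref{dthetads}), and simplifies the $\bm{\zeta}$-equation to $\frac{\differential\bm{\zeta}}{\differential s} = -\nabla_\bz\mathcal{H}$.

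Next I would compute the two gradients of $\mathcal{H}(\bz,\bm{\zeta}) = \sum_{i=1}^{n-1}z_{i+1}\zeta_i - \frac{\alpha_{\bm{\tau}}(\bz)}{\beta_{\bm{\tau}}(\bz)}\zeta_n + \frac{1}{2\beta_{\bm{\tau}}^2(\bz)}\zeta_n^2$. For $\nabla_{\bm{\zeta}}\mathcal{H}$: the $i$-th component ($i\le n-1$) is $z_{i+1}$ and the $n$-th is $-\frac{\alpha_{\bm{\tau}}}{\beta_{\bm{\tau}}} + \frac{\zeta_n}{\beta_{\bm{\tau}}^2}$; recognizing $\zeta_n = \bb^\top\bm{\zeta}$, $\bb\bb^\top\bm{\zeta} = \zeta_n\bb$, and that $(z_2,\dots,z_n,0)^\top = \bA\bz$, this packages exactly into $\bA\bz + \frac{\bb\bb^\top\bm{\zeta}}{\beta_{\bm{\tau}}^2(\bz)} - \frac{\alpha_{\bm{\tau}}(\bz)}{\beta_{\bm{\tau}}(\bz)}\bb$, which is (\ref{dzds}). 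For $\nabla_\bz\mathcal{H}$: the term $\sum z_{i+1}\zeta_i$ contributes $(0,\zeta_1,\zeta_2,\dots,\zeta_{n-1})^\top = \bA^\top\bm{\zeta}$; the term $-\frac{\alpha_{\bm{\tau}}}{\beta_{\bm{\tau}}}\zeta_n$ contributes (by the quotient rule) $-\zeta_n\frac{\beta_{\bm{\tau}}\nabla_\bz\alpha_{\bm{\tau}} - \alpha_{\bm{\tau}}\nabla_\bz\beta_{\bm{\tau}}}{\beta_{\bm{\tau}}^2}$; the term $\frac{\zeta_n^2}{2\beta_{\bm{\tau}}^2}$ contributes $-\frac{\zeta_n^2}{\beta_{\bm{\tau}}^3}\nabla_\bz\beta_{\bm{\tau}}$. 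Negating (for $\frac{\differential\bm{\zeta}}{\differential s} = -\nabla_\bz\mathcal{H}$) and writing $\zeta_n = \bb^\top\bm{\zeta}$ gives precisely (\ref{dzetads}). Finally, $\frac{\differential\psi}{\differential s} = \theta\cdot 1 + \langle\bm{\zeta},\nabla_{\bm{\zeta}}\mathcal{H}\rangle = -\mathcal{H} + \langle\bm{\zeta},\nabla_{\bm{\zeta}}\mathcal{H}\rangle$ (using $\theta = -\mathcal{H}$ on the solution manifold $F=0$); by Euler's relation for the quadratic-in-$\zeta_n$ part and linearity of the remaining terms, $\langle\bm{\zeta},\nabla_{\bm{\zeta}}\mathcal{H}\rangle - \mathcal{H} = \frac{\zeta_n^2}{2\beta_{\bm{\tau}}^2} = \frac{(\bb^\top\bm{\zeta})^2}{2\beta_{\bm{\tau}}^2(\bz)}$, which is (\ref{dpsids}).

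This is essentially a bookkeeping exercise once the Charpit--Lagrange template is in place, so I do not expect a genuine obstacle; the one point needing a little care is the identification of the vector/matrix-free partial derivatives with the compact $\bA,\bA^\top,\bb\bb^\top$ expressions, exploiting the Brunovsky structure $\bA = [\bm{0}\,|\,\bm{e}_1\,|\,\dots\,|\,\bm{e}_{n-1}]$, $\bb = \bm{e}_n$ — in particular that $\bb^\top\bm{\zeta} = \zeta_n$ isolates the only nonlinear coordinate and that $\frac{\alpha_{\bm{\tau}}}{\beta_{\bm{\tau}}} = L_{\bm{f}}^n\lambda\circ\bm{\tau}^{-1}$ while $\frac{1}{\beta_{\bm{\tau}}}$ relates to $L_{\bm{g}}L_{\bm{f}}^{n-1}\lambda$. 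A secondary subtlety worth one sentence is the regularity/solution-concept caveat: the characteristic ODEs describe the classical (smooth) solution and its first-order jet (hence the terminology ``strips'' rather than ``curves''), valid as long as characteristics do not cross; beyond that one falls back to the viscosity framework already flagged after (\ref{HJBform}). I would close by noting that the system is autonomous in $s$ up to the trivial $t = s$ reparametrization, and that $\theta$ being constant along characteristics is consistent with $\theta = -\mathcal{H}(\bz,\bm{\zeta})$ together with $\mathcal{H}$ being conserved along the Hamiltonian flow $(\bz(s),\bm{\zeta}(s))$.
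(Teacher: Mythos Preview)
Your proposal is correct and follows essentially the same approach as the paper: write the PDE as $F(t,\bz,\bm{\zeta},\theta,\psi):=\theta+\mathcal{H}(\bz,\bm{\zeta})=0$, apply the standard Lagrange--Charpit equations, and read off the $2n+3$ ODEs. You supply more of the componentwise gradient computations and the packaging into $\bA,\bA^{\top},\bb\bb^{\top}$ form than the paper does (the paper simply states the Lagrange--Charpit template and the resulting ODEs), and your closing remarks on viscosity solutions and conservation of $\mathcal{H}$ are valid but extra to what the paper includes.
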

\begin{proof}
Transcribing (\ref{HJBform}) in the form 
\[F(t,\bz,\bm{\zeta},\theta,\psi) := \theta + \langle \bA\bz,\bm{\zeta}\rangle - \frac{\alpha_{\bm{\tau}}(\bz)}{\beta_{\bm{\tau}}(\bz)} \langle\bb,\bm{\zeta}\rangle + \frac{\langle\bb,\bm{\zeta}\rangle^2}{2\beta_{\bm{\tau}}^2(\bz)}=0,\]
and then writing the standard Lagrange-Charpit equations: 
\begin{align*}
&\frac{\differential t}{\differential s} = \frac{\partial F}{\partial\theta}, \quad \frac{\differential \bm{z}}{\differential s} = \frac{\partial F}{\partial\bm{\zeta}}, \quad \frac{\differential \theta} {\differential s} = -\frac{\partial F}{\partial t} - \theta\frac{\partial F}{\partial\psi}, \\
&\frac{\differential \bm{\zeta}}{\differential s} = -\frac{\partial F}{\partial\bz} -\bm{\zeta}\frac{\partial F}{\partial\psi}, \quad \frac{\differential \psi} {\differential s} = \theta\frac{\partial F}{\partial\theta} + \bigg\langle\bm{\zeta},\frac{\partial F}{\partial\bm{\zeta}}\bigg\rangle,	
\end{align*}
results in the system of $2n+3$ coupled nonlinear ODEs (\ref{CharODE}). 	
\end{proof}
Since $s=t$ from (\ref{dtds}), the characteristic strips are obtained by integrating (\ref{dzds})--(\ref{dpsids}) for $s=t\in[0,1]$ with prescribed boundary condition $\psi(\bz,t=1)=\psi_{1}(\bz)$. While the system of ODEs (\ref{CharODE}) is quite nonlinear to allow for  closed-form solution, we will see in Section \ref{subsubCharApprox} that a special case of the same is amenable for analytical treatment.
 
%
%


\section{Feasible Steering}\label{SecSuboptimal}
In this Section, we derive a feasible controller that satisfies the constraints (\ref{FLSdyn})-(\ref{PDFconstr2pbvp}). To that end, we give a modification of the minimum energy PDF steering problem (\ref{FLSprobMinEnergy}) (equivalently, problem (\ref{FLSdensityprob}) or (\ref{TransformedLOMT})) by considering the problem below in the feedback linearized coordinates, given by 
\begin{subequations}
\begin{align}
&\qquad\;\underset{(\widetilde{\sigma},\widetilde{v})}{\text{inf}}
& & \int_{\mathcal{Z}} \int_{0}^{1} \frac{1} {2}   \lvert \widetilde{v}(\bz,t) \rvert^{2} \,\widetilde{\sigma}(\bz,t) \: \differential t \: \differential\bz\label{ObjFluidzRelax}\\ 
& \text{subject to} & &  \dfrac{\partial \widetilde{\sigma}}{\partial t} + \nabla_{\bz} \cdot ((\bA\bz + \bb \widetilde{v} )\widetilde{\sigma}) = 0,\label{ConstrLiouvillezRelax}\\
& & &  \widetilde{\sigma}(\bz,0)=\sigma_{0}(\bz),  \quad \widetilde{\sigma}(\bz,1)=\sigma_{1}(\bz),\label{ConstrPDFzRelax}
\end{align}
\label{TransOCFluidsrelax} 
\end{subequations}
which differs from (\ref{TransformedLOMT}) only in the objective. Here, $\bz=\bm{\tau}(x)$, $u=\alpha(\bx)+\beta(\bx)\widetilde{v}$, as before. We will see that this modified problem is slightly more tractable than the problem formulated in Section \ref{SectionMinEnergySteering}. The existence and uniqueness for the minimizer $(\widetilde{\sigma}^{\text{opt}},\widetilde{v}^{\text{opt}})$ for  (\ref{TransOCFluidsrelax}) can be guaranteed by following arguments similar to Section \ref{SubsecOptimality}.

Unlike the optimal pair $(\sigma^{\text{opt}},v^{\text{opt}})$ for problem (\ref{TransformedLOMT}), the optimal pair $(\widetilde{\sigma}^{\text{opt}},\widetilde{v}^{\text{opt}})$ for problem (\ref{TransOCFluidsrelax}) no longer corresponds to the \emph{minimum energy} optimal transport solution $(\rho^{\text{opt}},u^{\text{opt}})$ for problem (\ref{FLSdensityprob}), and is only a feasible solution that guarantees steering the prescribed $\rho_{0}$ to $\rho_{1}$ via the given controlled dynamics. Indeed, we can rewrite (\ref{TransOCFluidsrelax}) in the physically meaningful variables $(\rho,u)$ as  
\begin{subequations}
\begin{align}
& \qquad\;\underset{(\rho,u)}{\text{inf}}
& & \!\!\!\!\int_{\mathcal{X}} \!\int_{0}^{1} \!\frac{1} {2}  \left(\! \frac{u(\bx,t) - \alpha(\bx)}{\beta(\bx)} \!\right)^{2}\!\rho(\bx,t) \: \differential t \: \differential \bx\label{OrigOCFluidsRelax}\\ 
& \text{subject to} & &  \dfrac{\partial \rho}{\partial t} + \nabla_{\bx} \cdot \left(\left(\bm{f}(\bx) + \bm{g}(\bx)u\right) \rho\right) = 0, \label{FLSLiouvilleRelax}\\
& & &  \rho(\bx,0)= \rho_{0}(\bx),\quad\rho(\bx,1) = \rho_{1}(\bx).\label{FLSpdftpbvpRelax}
\end{align} 
\label{FLSdensityprobRelax}
\end{subequations}
Nonetheless, solving (\ref{TransOCFluidsrelax}) (equivalently, (\ref{FLSdensityprobRelax})) is of interest since it furnishes a controller $u(\bx,t)$ for steering any prescribed $\rho_{0}$ to any other prescribed $\rho_{1}$ in unit time subject to (\ref{SingleInputControlAffine}). In the following, we focus on the solution of (\ref{TransOCFluidsrelax}).

\subsection{Solution of (\ref{TransOCFluidsrelax}) via Optimal Transport Map}\label{SubsecOMTapprox}

In \cite{chen2017optimal}, the dynamic problem (\ref{TransOCFluidsrelax}) was shown to be equivalent to a static Mong\'{e}-Kantorovich optimal transport problem of the form
\begin{subequations}
\begin{align}
& \qquad\;\underset{\hat{\pi}}{\text{inf}}& & \!\!\!\! \int_{\mathcal{Z} \times \mathcal{Z} }  \frac{1} {2}  \lVert \hat{\bz}_{1} - \hat{\bz}_{0} \rVert_{2}^2 \:\hat{\pi}( \: \differential \hat{\bz}_{0} \: \differential \hat{\bz}_{1})\\ 
& \text{subject to} & &	 \hat{\pi}(\differential \hat{\bz}_{0}\times \mathcal{Z})=\hat{\sigma}_0(\hat{\bz}_{0})\differential\hat{\bz}_{0} ,\\
& & & \hat{\pi}( \mathcal{Z}\times \differential \hat{\bz}_{1} )=\hat{\sigma}_1(\hat{\bz}_{1})\differential \hat{\bz}_{1},
\end{align}
\label{OMTrelax}	
\end{subequations}
where the infimum is taken over all probability measures $\hat{\pi}$ supported on the product space $\mathcal{Z}\times\mathcal{Z}$ with marginals $\hat{\sigma}_0(\hat{\bz}_{0})\differential \hat{\bz}_{0}$ and $\hat{\sigma}_1(\hat{\bz}_{1})\differential \hat{\bz}_{1}$, given by\footnote{The corresponding formula for (\ref{OMTrelaxMarg0}) in \cite[p. 2141, unnumbered eqn. after (28)]{chen2017optimal} contains an extra factor $\lvert\det(\bm{\Phi}_{10})\rvert^{-1}$, which in our case equals unity; see (\ref{STMexplicit}) in Appendix \ref{appendix:STMGramianFormula}.}
\begin{subequations}
\begin{align}
	    \hat{\sigma}_0(\hat{\bz}_{0}) &:= \det\left(\bm{M}_{10}\right)^{\frac{1}{2}}
    \sigma_0(\bm{\Phi}_{10}^{-1}\bm{M}_{10}^{\frac{1}{2}}\:\hat{\bz}_{0}), \label{OMTrelaxMarg0}\\
    \hat{\sigma}_1(\hat{\bz}_{1}) &:= \det\left(\bm{M}_{10}\right)^{\frac{1}{2}} \sigma_{1}(\bm{M}_{10} ^{\frac{1}{2}}\:\hat{\bz}_{1}).\label{OMTrelaxMarg1}
\end{align}
\label{OMTrelaxMarg}	
\end{subequations}
In (\ref{OMTrelaxMarg}), $\sigma_{0},\sigma_{1}$ are as in (\ref{rho2sigma}). Furthermore, the matrices $\bm{M}_{10},\bm{\Phi}_{10}$ are the controllability Gramian and the state transition matrix, respectively, associated with $(\bA,\bb)$ in (\ref{feedbacklinearizedform}) over time horizon $[0,1]$. Exploiting the binary structure of the pair $(\bA,\bb)$, one can compute explicit formula for $\bm{M}_{10},\bm{\Phi}_{10}$ (see Appendix \ref{appendix:STMGramianFormula}), which will come in handy later.

Since (\ref{OMTrelax}) is a standard optimal transport problem, the optimal push-forward $\hat{T}^{\text{opt}}$ for this problem (i.e., $\hat{\sigma}_{1} = \hat{T}^{\text{opt}}\:\sharp\:\hat{\sigma}_{0}$) is unique \cite{villani2003topics}, and is of the form $\hat{T}^{\text{opt}}= \nabla\phi$, for $\phi$ convex. The function $\phi$ solves the Mong\'{e}-Amp{\`e}re PDE (see e.g., \cite[p. 282 and 323]{villani2008optimal})
\begin{align}
\det\left({\rm{Hess}}\left(\phi\right)\right) = \frac{\hat{\sigma}_{0}}{\hat{\sigma}_{1}\left(\nabla\phi\right)},
\label{MongeAmpere}	
\end{align}
which results from substituting $\hat{T}^{\text{opt}}= \nabla\phi$ in $\hat{\sigma}_{1} = \hat{T}^{\text{opt}}\:\sharp\:\hat{\sigma}_{0}$. The optimal coupling $\hat{\pi}^{\text{opt}}$ in (\ref{OMTrelax}) is supported on the graph of $\hat{T}^{\text{opt}}$. 
The minimizing pair $(\widetilde{\sigma}^{\text{opt}},\widetilde{v}^{\text{opt}})$ for (\ref{TransOCFluidsrelax}) can be computed by using $\hat{T}^{\text{opt}}$ as follows.

\begin{proposition}\label{PropOMTsemianalytical} 
Given the optimal transport map $\hat{T}^{{\rm{opt}}}=\nabla\phi$ for (\ref{OMTrelax}), where $\phi$ solves (\ref{MongeAmpere}), let
\begin{subequations}
\begin{align}
\bm{P}(t) &:= \bm{\Phi}(t,1)\bm{M}(1,t)\bm{M}_{10}^{-1}\bm{\Phi}_{10},\label{defP}\\
\bm{Q}(t) &:=\bm{M}(t,0)\bm{\Phi}(1,t)^{\top}\bm{M}_{10}^{-1},	
\end{align}
\label{DefPQ}	
\end{subequations}
wherein the controllability Gramian $\bm{M}$ and the state transition matrix $\bm{\Phi}$ are as in Appendix \ref{appendix:STMGramianFormula}. Define
\begin{subequations}
\begin{align}
T(\bz) &:= \bm{M}_{10}^{\frac{1}{2}}\hat{T}^{{\rm{opt}}}(\bm{M}_{10}^{-\frac{1}{2}}\bm{\Phi}_{10}\bz),\label{defT}\\
T_t(\bz) &:= \bm{P}(t)\bz + \bm{Q}(t)T(\bz).\label{DefTt}
\end{align}
\label{DefTTt}	
\end{subequations}
Then, the minimizing pair $(\widetilde{\sigma}^{{\rm{opt}}},\widetilde{v}^{{\rm{opt}}})$ for (\ref{TransOCFluidsrelax}) is given by
\begin{subequations}
\begin{align} 
\!\!\!\widetilde{\sigma}^{{\rm{opt}}}(\bz,t) \!&=  \left(T_t\right)\:\sharp\:\hat{\sigma}_{0}, \label{OMToptpdf}\\
\!\!\!\widetilde{v}^{{\rm{opt}}}(\bz,t) \!&= \bb^{\top}\bm{\Phi}(1,t)^{\top}\bm{M}^{-1}_{10}[T\circ T^{-1}_{t}(\bz) - \bm{\Phi}_{10}T^{-1}_{t}(\bz)] \label{OMToptvel}.
\end{align}
\label{OMTAnalyticalSol}	
\end{subequations}
\end{proposition}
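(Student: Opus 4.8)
The plan is to reduce the dynamic problem (\ref{TransOCFluidsrelax}) to the static Monge--Kantorovich problem (\ref{OMTrelax}), following \cite{chen2017optimal}, and then to make the solution explicit using the closed-form state transition matrix and controllability Gramian of the pair $(\bA,\bb)$ in (\ref{feedbacklinearizedform}) collected in Appendix~\ref{appendix:STMGramianFormula}. The starting observation is that, since the prior drift in (\ref{ConstrLiouvillezRelax}) is the controllable linear field $\bz\mapsto\bA\bz$, the minimum control energy needed to steer $\dot{\bz}=\bA\bz+\bb\widetilde{v}$ from $\bz_{0}$ at $t=0$ to $\bz_{1}$ at $t=1$ is the classical quadratic $c(\bz_{0},\bz_{1})=\tfrac{1}{2}\langle\bz_{1}-\bm{\Phi}_{10}\bz_{0},\,\bm{M}_{10}^{-1}(\bz_{1}-\bm{\Phi}_{10}\bz_{0})\rangle$, attained by the open-loop control $\widetilde{v}^{\star}(t)=\bb^{\top}\bm{\Phi}(1,t)^{\top}\bm{M}_{10}^{-1}(\bz_{1}-\bm{\Phi}_{10}\bz_{0})$ with the resulting state $\bz(t)=\bm{\Phi}(t,0)\bz_{0}+\big(\int_{0}^{t}\bm{\Phi}(t,s)\bb\bb^{\top}\bm{\Phi}(1,s)^{\top}\differential s\big)\bm{M}_{10}^{-1}(\bz_{1}-\bm{\Phi}_{10}\bz_{0})$. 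The linear substitution $\hat{\bz}_{0}:=\bm{M}_{10}^{-\frac{1}{2}}\bm{\Phi}_{10}\bz_{0}$, $\hat{\bz}_{1}:=\bm{M}_{10}^{-\frac{1}{2}}\bz_{1}$ carries $c$ into $\tfrac{1}{2}\lVert\hat{\bz}_{1}-\hat{\bz}_{0}\rVert_{2}^{2}$ and the endpoint densities $\sigma_{0},\sigma_{1}$ of (\ref{rho2sigma}) into $\hat{\sigma}_{0},\hat{\sigma}_{1}$ of (\ref{OMTrelaxMarg}), the Jacobian factor being $\det(\bm{M}_{10})^{\frac{1}{2}}$ precisely because $\det\bm{\Phi}_{10}=1$ by (\ref{STMexplicit}); hence (\ref{TransOCFluidsrelax}) is equivalent to (\ref{OMTrelax}), and by the disintegration-of-measure argument of Step~2 in Section~\ref{SubsecOptimality} the value of (\ref{TransOCFluidsrelax}) is bounded below by that of (\ref{OMTrelax}).

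Next I would invoke Brenier's and McCann's theorems: problem (\ref{OMTrelax}) admits a unique optimal coupling $\hat{\pi}^{\mathrm{opt}}$ concentrated on the graph of $\hat{T}^{\mathrm{opt}}=\nabla\phi$ with $\phi$ convex, and $\phi$ solves the Monge--Amp\`ere equation (\ref{MongeAmpere}). Undoing the substitution produces the map $T$ of (\ref{defT}), which satisfies $T\sharp\sigma_{0}=\sigma_{1}$ and realizes the optimal coupling of $\sigma_{0}$ and $\sigma_{1}$ for the cost $c$; that is, the optimal static plan is concentrated on $\{(\bz_{0},T(\bz_{0}))\}$.

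I would then assemble the dynamic optimum thread by thread: along the optimal endpoint pair $(\bz_{0},T(\bz_{0}))$, substitute $\bz_{1}=T(\bz_{0})$ into the minimum-energy state and control above. The semigroup identity $\bm{\Phi}(1,t)\bm{\Phi}(t,s)=\bm{\Phi}(1,s)$ gives $\int_{0}^{t}\bm{\Phi}(t,s)\bb\bb^{\top}\bm{\Phi}(1,s)^{\top}\differential s=\bm{M}(t,0)\bm{\Phi}(1,t)^{\top}$, so the coefficient of $\bz_{1}$ in $\bz(t)$ is exactly $\bm{Q}(t)$ of (\ref{DefPQ}); and the Gramian decomposition $\bm{M}_{10}=\bm{\Phi}(1,t)\bm{M}(t,0)\bm{\Phi}(1,t)^{\top}+\bm{M}(1,t)$ reduces the coefficient of $\bz_{0}$ to $\bm{\Phi}(t,0)-\bm{M}(t,0)\bm{\Phi}(1,t)^{\top}\bm{M}_{10}^{-1}\bm{\Phi}_{10}=\bm{\Phi}(t,1)\bm{M}(1,t)\bm{M}_{10}^{-1}\bm{\Phi}_{10}=\bm{P}(t)$. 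Hence the position at time $t$ of the particle launched from $\bz_{0}$ is $T_{t}(\bz_{0})=\bm{P}(t)\bz_{0}+\bm{Q}(t)T(\bz_{0})$, i.e. (\ref{DefTt}); since $\bm{P}(0)=\bm{I}$, $\bm{Q}(0)=\bm{0}$, $\bm{P}(1)=\bm{0}$, $\bm{Q}(1)=\bm{I}$, we have $T_{0}={\rm{Id}}$ and $T_{1}=T$, so the induced flow carries $\sigma_{0}$ to the interpolating family $\widetilde{\sigma}^{\mathrm{opt}}(\cdot,t)$ of (\ref{OMToptpdf}) and the pair solves (\ref{ConstrLiouvillezRelax})--(\ref{ConstrPDFzRelax}) weakly via the standard ODE--Liouville correspondence. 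The control observed at the location $\bz$ occupied at time $t$ is the open-loop control of the thread through it, namely with $\bz_{0}=T_{t}^{-1}(\bz)$ and $\bz_{1}=T\circ T_{t}^{-1}(\bz)$, which after substitution is precisely (\ref{OMToptvel}). Since this feasible pair attains the lower bound of the first step and $\hat{T}^{\mathrm{opt}}$ (hence $T$) is unique, it is the minimizer of (\ref{TransOCFluidsrelax}), in line with Step~3 of Section~\ref{SubsecOptimality}.

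I expect the main obstacle to be twofold. First, the bookkeeping of the state-transition and Gramian identities through the sub-horizons $[0,t]$ and $[t,1]$, so that the minimum-energy trajectory collapses to the compact form $\bm{P}(t)\bz_{0}+\bm{Q}(t)T(\bz_{0})$ with $\bm{P},\bm{Q}$ exactly as in (\ref{DefPQ}); this is purely linear-algebraic but must be carried out carefully for the particular binary pair of (\ref{feedbacklinearizedform}). Second, and more delicate, one must show that $T_{t}$ is injective for every $t\in[0,1]$, so that $T_{t}^{-1}$, and hence $\widetilde{v}^{\mathrm{opt}}(\bz,t)$ in (\ref{OMToptvel}), is well defined. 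I would treat this as in Section~\ref{SubsecOptimality}: writing $\hat{\bz}_{0}=\bm{M}_{10}^{-\frac{1}{2}}\bm{\Phi}_{10}\bz_{0}$, the map $T_{t}$ factors as a fixed invertible linear map precomposed with $\hat{\bz}_{0}\mapsto R_{1}(t)\hat{\bz}_{0}+R_{2}(t)\nabla\phi(\hat{\bz}_{0})$, where for $t\in(0,1]$ the matrix $R_{2}(t)$ is invertible and $R_{2}(t)^{-1}R_{1}(t)=(\bm{I}-S)^{-1}S$ with $S:=\bm{M}_{10}^{-\frac{1}{2}}\bm{M}(1,t)\bm{M}_{10}^{-\frac{1}{2}}$ satisfying $\bm{0}\preceq S\prec\bm{I}$, hence symmetric positive semidefinite; therefore $T_{t}$ is, up to invertible linear conjugation, the gradient of the strictly convex function $\hat{\bz}_{0}\mapsto\tfrac{1}{2}\langle R_{2}^{-1}R_{1}\hat{\bz}_{0},\hat{\bz}_{0}\rangle+\phi(\hat{\bz}_{0})$, and is therefore injective, while $T_{0}={\rm{Id}}$ trivially.
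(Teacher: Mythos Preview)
Your proposal is correct and follows essentially the same route as the paper, which in fact does not give an independent proof but simply cites \cite[Section III.A]{chen2017optimal}; you have reconstructed and fleshed out precisely that argument, including the injectivity of $T_{t}$ that the paper explicitly attributes to \cite{chen2017optimal}. Your linear-algebraic reductions for $\bm{P}(t)$ and $\bm{Q}(t)$ via the semigroup identity and the Gramian splitting $\bm{M}_{10}=\bm{\Phi}(1,t)\bm{M}(t,0)\bm{\Phi}(1,t)^{\top}+\bm{M}(1,t)$ are correct, and your injectivity argument, reducing $T_{t}$ to the gradient of a convex function after conjugation by invertible linear maps, mirrors the mechanism used in Appendix~\ref{appendix:MOCinvfnthm} for the closely related map (\ref{z0viaz}).
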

\begin{proof}
See \cite[Section III.A]{chen2017optimal}; therein the authors also prove that $T_{t}$ in (\ref{DefTt}) is injective, i.e., the optimal pair (\ref{OMTAnalyticalSol}) is well-defined. We note here that \cite[eqn. (32)]{chen2017optimal} has a typo: a factor $\bm{\Phi}_{10}$ is missing after the minus sign in that expression; c.f. (\ref{OMToptvel}) above. 	
\end{proof}

\begin{figure*}[htpb]
\begin{subequations}
\begin{align}
T(\bz) &= \begin{pmatrix}
\sqrt{\frac{4}{3} + \frac{3\sqrt{3}}{52}} & \frac{3}{2}\bigg\{\sqrt{\frac{4+\sqrt{13}}{78}} - \sqrt{\frac{4-\sqrt{13}}{78}}\bigg\}\\
\frac{3}{2}\bigg\{\sqrt{\frac{4+\sqrt{13}}{78}} - \sqrt{\frac{4-\sqrt{13}}{78}}\bigg\} & \sqrt{\frac{10}{13} + \frac{3\sqrt{3}}{52}}	
\end{pmatrix}\nonumber\\
 &\times \hat{T}^{{\rm{opt}}}\left(\begin{pmatrix}
\sqrt{\frac{3(40+3\sqrt{3})}{13}} & \sqrt{\frac{3(40+3\sqrt{3})}{13}}+3\bigg\{\sqrt{\frac{4-\sqrt{13}}{26}} - \sqrt{\frac{4+\sqrt{13}}{26}}\bigg\}\\
3\bigg\{\sqrt{\frac{4-\sqrt{13}}{26}} - \sqrt{\frac{4+\sqrt{13}}{26}}\bigg\}	& \sqrt{\frac{16+9\sqrt{3}}{13}}+3\bigg\{\sqrt{\frac{4-\sqrt{13}}{26}} - \sqrt{\frac{4+\sqrt{13}}{26}}\bigg\} 
\end{pmatrix}
\bz\right),\\
T_t(\bz) &= \begin{pmatrix}
 -2(1-t)^{3} + 3(1-t)^{2} & -(1-t)^{3}+(1-t)^{2}\\
 6(1-t)^{2}-6(1-t) & 3(1-t)^{2}-2(1-t)	
 \end{pmatrix}
\bz + \begin{pmatrix}
 	-2t^{3} + 3t^{2} & t^{3}-t^{2}\\
 -6t^{2}+6t & 3t^{2}-2t
 \end{pmatrix}
T(\bz).	
\end{align}
\label{TTtDI}
\end{subequations}
\end{figure*}

While Proposition \ref{PropOMTsemianalytical} provides a semi-analytical handle for $(\widetilde{\sigma}^{\text{opt}},\widetilde{v}^{\text{opt}})$, the associated computation is challenging since obtaining $\hat{T}^{{\rm{opt}}}$ requires $\phi$, which in turn requires solving the second order elliptic PDE (\ref{MongeAmpere}). Numerical methods for solving the latter is a topic of recent research \cite{froese2011convergent,benamou2014numerical}. Even if one has access to $\hat{T}^{{\rm{opt}}}$, the remaining analytical computation in Proposition \ref{PropOMTsemianalytical} is rather unwieldy. We illustrate this through the double integrator example below. After that, we outline an alternative approach for solving (\ref{TransOCFluidsrelax}) in Section \ref{SubsecHJBapprox}.

\begin{example}(\textbf{Double integrator})
If the state space dimension $n=2$ in (\ref{SingleInputControlAffine}), then (\ref{feedbacklinearizedform}) reduces to a double integrator. Suppose that one has already computed the map $\hat{T}^{{\rm{opt}}}=\nabla\phi$ as function of the given endpoint PDFs $\rho_{0},\rho_{1}$ via (\ref{rho2sigma}), (\ref{OMTrelaxMarg}), and (\ref{MongeAmpere}). Using Appendix \ref{appendix:STMGramianFormula}, (\ref{DefPQ}) and (\ref{DefTTt}), we then get (\ref{TTtDI}). In this case, (\ref{OMToptvel}) results
\begin{align*}
&\widetilde{v}^{{\rm{opt}}}(\bz,t) \\
&= \begin{pmatrix}
 	6-12t & -2+6t
 \end{pmatrix}
\left[T\circ T^{-1}_{t}(\bz) - \begin{pmatrix}
 1 & 1\\
 0 & 1	
 \end{pmatrix}
T^{-1}_{t}(\bz)\right],
\end{align*}
where $T(\cdot),T_{t}(\cdot)$ are given by (\ref{TTtDI}).
\end{example}

\subsection{Solution of (\ref{TransOCFluidsrelax}) via HJB PDE}\label{SubsecHJBapprox}
The following Proposition summarizes an alternative approach for solving (\ref{TransOCFluidsrelax}) compared to the one in Section \ref{SubsecOMTapprox}.
\begin{proposition}\label{PropHJBapprox}
Let $\widetilde{\psi}(\bz,t)$ solves the HJB PDE
\begin{equation}
\begin{aligned}
\!\!\!\frac{\partial\widetilde{\psi}}{\partial t} + \!\displaystyle\sum_{i=1}^{n-1}\!z_{i+1}\frac{\partial\widetilde{\psi}}{\partial z_{i}} + \frac{1}{2}\!\left(\!\frac{\partial\widetilde{\psi}}{\partial z_{n}}\!\right)^{\!\!2}\!\!=\! 0,
\end{aligned}
\label{HJBapprox}
\end{equation}
subject to terminal condition $\widetilde{\psi}(\bz,1) = \widetilde{\psi}_{1}(\bz)$ that depends on $\hat{\sigma}_{0},\hat{\sigma}_{1}$. Then, $\widetilde{v}^{{\rm{opt}}}$ in (\ref{TransOCFluidsrelax}) can be obtained as 
\begin{equation}
    \widetilde{v}^{{\rm{opt}}}(\bz,t) = \displaystyle\frac{\partial}{\partial z_{n}}\widetilde{\psi}(\bz,t).
    \label{HJBapproxv} 
\end{equation}
The optimal controlled PDF $\widetilde{\sigma}^{{\rm{opt}}}(\bz,t)$ solves (\ref{ConstrLiouvillezRelax})-(\ref{ConstrPDFzRelax}) with $\widetilde{v}\equiv\widetilde{v}^{{\rm{opt}}}(\bz,t)$ given by (\ref{HJBapproxv}).
\end{proposition}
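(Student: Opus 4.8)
\textbf{Proof proposal for Proposition \ref{PropHJBapprox}.}

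The plan is to recognize that Problem (\ref{TransOCFluidsrelax}) is, after the affine change of variables already recorded in (\ref{OMTrelaxMarg})--(\ref{OMTAnalyticalSol}), precisely the minimum-energy density steering problem for the controllable linear pair $(\bA,\bb)$ of (\ref{feedbacklinearizedform}) with quadratic running cost $\frac12|\widetilde v|^2$. For that problem the first-order optimality analysis is the ``linear prior dynamics'' specialization of the Lagrangian/HJB argument carried out for Theorem \ref{ThmHJBopt}, so I would simply invoke Theorem \ref{ThmHJBopt} with $\alpha_{\bm{\tau}}\equiv 0$ and $\beta_{\bm{\tau}}\equiv 1$. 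Substituting these into the HJB PDE (\ref{HJBoptimal}) kills the $-(\alpha_{\bm\tau}/\beta_{\bm\tau})\partial_{z_n}\psi$ term and turns the coefficient $1/(2\beta_{\bm\tau}^2)$ into $1/2$, yielding exactly (\ref{HJBapprox}); likewise (\ref{voptOptimal}) collapses to $\widetilde v^{\mathrm{opt}}=\partial_{z_n}\widetilde\psi$, which is (\ref{HJBapproxv}). The existence/uniqueness of the minimizing pair $(\widetilde\sigma^{\mathrm{opt}},\widetilde v^{\mathrm{opt}})$ was already asserted in the text right after (\ref{TransOCFluidsrelax}) by repeating the three-step Step~1--Step~3 argument of Section \ref{SubsecOptimality}, so I only need to quote it.

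Concretely, the steps in order are: (i) set $\alpha_{\bm\tau}=0$, $\beta_{\bm\tau}=1$ in the Lagrangian (\ref{Lagrangian}) (with $\widetilde\psi$ in place of $\psi$ and $\widetilde\sigma,\widetilde v$ in place of $\sigma,v$), perform the pointwise unconstrained minimization over $\widetilde v$ — the integrand $\frac12\widetilde v^2\widetilde\sigma+\psi\,\nabla_{\bz}\!\cdot(\bb\widetilde v\,\widetilde\sigma)$ is, after integration by parts in $\bz$, quadratic in $\widetilde v$ with minimizer $\widetilde v=\bb^\top\nabla_{\bz}\widetilde\psi=\partial_{z_n}\widetilde\psi$; (ii) substitute this minimizer back to obtain the stationarity condition in $\widetilde\sigma$, which is the HJB PDE (\ref{HJBapprox}) with Hamiltonian $\mathcal H(\bz,\bm\zeta)=\sum_{i=1}^{n-1}z_{i+1}\zeta_i+\frac12\zeta_n^2$; (iii) note the terminal condition $\widetilde\psi(\bz,1)=\widetilde\psi_1(\bz)$ is the one that makes the induced $\widetilde\sigma^{\mathrm{opt}}$-flow (the solution of (\ref{ConstrLiouvillezRelax}) started from $\sigma_0$) hit $\sigma_1$ at $t=1$ — its existence follows from the displacement-interpolation construction of Step~3 in Section \ref{SubsecOptimality}, equivalently from the explicit optimal-transport solution of Proposition \ref{PropOMTsemianalytical}, which identifies $\widetilde\psi_1$ (up to the affine rescaling by $\bm M_{10},\bm\Phi_{10}$) with the Kantorovich potential $\phi$ solving (\ref{MongeAmpere}); (iv) conclude that $(\widetilde\sigma^{\mathrm{opt}},\widetilde v^{\mathrm{opt}})$ from (\ref{HJBapproxv}) and the associated Liouville flow solve (\ref{TransOCFluidsrelax}).

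The only genuinely delicate point is the regularity/well-posedness of the HJB--continuity system: (\ref{HJBapprox}) is a first-order nonlinear PDE whose classical solution may fail to exist globally, so strictly speaking one should either work with viscosity solutions or, as the paper in fact does, bypass this by exhibiting the solution through the equivalent static optimal transport problem (\ref{OMTrelax}) — there $\widetilde\psi(\bz,t)$ is realized by a Hopf--Lax-type formula associated with the free Hamiltonian $\frac12\zeta_n^2$ composed with the linear drift, and smoothness of $\widetilde\psi_1$ (hence of $\phi$) propagates along characteristics for $t\in[0,1]$ because the optimal transport map $T_t$ of Proposition \ref{PropOMTsemianalytical} is injective. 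I would therefore present the proof as: invoke Theorem \ref{ThmHJBopt} with $(\alpha_{\bm\tau},\beta_{\bm\tau})=(0,1)$ for the form of the PDE and of $\widetilde v^{\mathrm{opt}}$, invoke the existence/uniqueness already established after (\ref{TransOCFluidsrelax}), and invoke Proposition \ref{PropOMTsemianalytical} to exhibit the pair explicitly and thereby certify that the HJB solution with the stated terminal data indeed steers $\sigma_0$ to $\sigma_1$. The potential obstacle — a self-contained existence theory for (\ref{HJBapprox}) — is thus sidestepped by leaning on the optimal-transport representation rather than confronted directly.
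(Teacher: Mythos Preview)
Your proposal is correct and follows essentially the same approach as the paper: the paper's proof simply states that the result follows by specializing Theorem~\ref{ThmHJBopt} to the case $\alpha_{\bm{\tau}}(\bz)\equiv 0$, $\beta_{\bm{\tau}}(\bz)\equiv 1$, which is exactly your step~(i)--(ii). Your additional discussion of the terminal condition via Proposition~\ref{PropOMTsemianalytical} and of regularity via the optimal-transport representation anticipates material the paper treats separately in Propositions~\ref{PropEnvelopeApprox}--\ref{PropTerminalInitial} and Section~\ref{subsubCharApprox}, but is not needed (nor included) in the paper's proof of this particular proposition.
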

\begin{proof}
Follows from specializing Theorem \ref{ThmHJBopt} for $\alpha_{\bm{\tau}}(\bz)\equiv 0$, $\beta_{\bm{\tau}}(\bz)\equiv 1$ for all $\bz\in\mathcal{Z}$; cf. (\ref{TransformedLOMT}) with (\ref{TransOCFluidsrelax}).	
\end{proof}

\subsubsection{Envelope representation for $\widetilde{\psi}$}\label{subsubEnvApprox}  
Following \cite{rockafellar2000convexity}, we next give upper and lower envelope representation formula for $\widetilde{\psi}(\bz,t)$ in Proposition \ref{PropHJBapprox} via Fenchel duality; see Appendix \ref{appendix:EnvelopeFormula}. The upper envelope representation formula (\ref{UpperEnvelope}) below has appeared in \cite{chen2017optimal} but followed a  different derivation (see \cite[Appendix A]{chen2017optimal}) compared to our duality approach. The lower envelope representation formula (\ref{LowerEnvelope}) below is new, and comes out of the same duality framework.
  
\begin{proposition}\label{PropEnvelopeApprox}
Consider $(\bA,\bb)$ as in (\ref{feedbacklinearizedform}) and the associated controllability Gramian $\bm{M}(t,s)$ given in Appendix \ref{appendix:STMGramianFormula}. For any initial function $\widetilde{\psi}_{0}(\bz):=\widetilde{\psi}(\bz,t=0):\mathcal{Z}\mapsto\mathbb{R}\cup\{+\infty\}$, the map $\widetilde{\psi}(\bz,t)$ in Proposition \ref{PropHJBapprox} admits the following upper envelope representation:
\begin{align}
\widetilde{\psi}(\bz,t) \!= \underset{\overline{\bz}}{\inf}\bigg\{\!\widetilde{\psi}_{0}(\overline{\bz}) + \frac{1}{2}\left\lVert \bz - \exp(t\bA)\overline{\bz} \right\rVert_{\left(\bM(t,0)\right)^{\!-1}}^{2}\!\!\bigg\}.
\label{UpperEnvelope}	
\end{align}
If $\widetilde{\psi}_{0}(\bz)$ is convex, proper and lower semicontinuous in $\bz\in\mathcal{Z}$, then 	the map $\widetilde{\psi}(\bz,t)$ in Proposition \ref{PropHJBapprox} also admits the following lower envelope representation:
\begin{align}
&\widetilde{\psi}(\bz,t) = \underset{\bm{r}}{\sup}\bigg\{-\widetilde{\psi}_{0}^{*}(\bm{r}) + \bz^{\top}\exp\left(-t\bA^{\top}\right)\bm{r} \nonumber\\
&- \frac{1}{2}\bm{r}^{\top}\!\left(\!\int_{0}^{t}\!\exp(-s\bA)\bb\bb^{\top}\exp(-s\bA^{\top})\differential s\!\right)\!\bm{r}\bigg\}.
\label{LowerEnvelope}
\end{align}
\end{proposition}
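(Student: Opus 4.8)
The plan is to obtain both (\ref{UpperEnvelope}) and (\ref{LowerEnvelope}) as the specialization $\alpha_{\bm{\tau}}\equiv 0$, $\beta_{\bm{\tau}}\equiv 1$ of the envelope machinery behind Theorem \ref{ThmHJBoptHopfLax}. Write the Hamiltonian of (\ref{HJBapprox}) as $\mathcal{H}(\bz,\bm{\zeta}) = \langle\bA\bz,\bm{\zeta}\rangle + \tfrac{1}{2}(\bb^{\top}\bm{\zeta})^{2}$, which is affine in $\bz$ (hence both concave and convex in $\bz$, so the hypothesis of Theorem \ref{ThmHJBoptHopfLax} is met trivially) and convex in $\bm{\zeta}$. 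Legendre conjugation of $\mathcal{H}$ in the momentum variable yields exactly the Lagrangian $\ell$ of (\ref{TrueLagrangian}) with $\bm{a}(\bz)=\bA\bz$, namely $\ell(\bz,\bm{w})=\tfrac{1}{2}(\bm{w}-\bA\bz)^{\top}(\bb\bb^{\top})^{\dagger}(\bm{w}-\bA\bz)$ for $\bm{w}-\bA\bz\in\mathcal{R}(\bb\bb^{\top})$ and $+\infty$ otherwise. By the Lax--Oleinik (value-function) representation, valid for Hamiltonians convex in the momentum, $\widetilde{\psi}(\bz,t)$ equals the infimum of $\widetilde{\psi}_{0}(\bz(0)) + \int_{0}^{t}\ell(\bz(s),\dot{\bz}(s))\,\differential s$ over absolutely continuous curves with $\bz(t)=\bz$; equivalently, it is the value of the optimal control problem with dynamics $\dot{\bz}=\bA\bz+\bb u$, running cost $\int_{0}^{t}\tfrac12 u^{2}\,\differential s$, and terminal penalty $\widetilde{\psi}_{0}$.

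For (\ref{UpperEnvelope}) I would split this value function into an inner minimization over $u$ with the endpoints $\bz(0)=\overline{\bz}$ and $\bz(t)=\bz$ held fixed, followed by an outer infimum over $\overline{\bz}$. The inner problem is the classical minimum-energy steering problem for the controllable pair $(\bA,\bb)$, whose closed-form value is $\tfrac12\|\bz-\exp(t\bA)\overline{\bz}\|^{2}_{(\bM(t,0))^{-1}}$, with $\bM(t,0)$ the controllability Gramian of Appendix \ref{appendix:STMGramianFormula} (invertible for $t>0$ by the Kalman rank condition on $(\bA,\bb)$). Taking the outer infimum gives (\ref{UpperEnvelope}); alternatively, the same conclusion follows by evaluating the dualizing kernel $K$ of (\ref{defK}) in this specialization --- it reduces to the explicit linear-quadratic value function --- and substituting it into the general upper-envelope formula of Appendix \ref{appendix:EnvelopeFormula}.

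For (\ref{LowerEnvelope}) I would pass to Fenchel duality. After the linear change of variable $\overline{\bz}\mapsto\exp(t\bA)\overline{\bz}$, (\ref{UpperEnvelope}) exhibits $\widetilde{\psi}(\cdot,t)$ as the infimal convolution of $\widetilde{\psi}_{0}\circ\exp(-t\bA)$ with the convex quadratic $\bm{w}\mapsto\tfrac12\|\bm{w}\|^{2}_{(\bM(t,0))^{-1}}$. When $\widetilde{\psi}_{0}$ is convex, proper and lower semicontinuous, so is $\widetilde{\psi}_{0}\circ\exp(-t\bA)$ (composition with a linear isomorphism) and hence so is $\widetilde{\psi}(\cdot,t)$. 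Using that the conjugate of an infimal convolution is the sum of the conjugates --- here $(\widetilde{\psi}_{0}\circ\exp(-t\bA))^{*}(\bm{r})=\widetilde{\psi}_{0}^{*}(\exp(t\bA^{\top})\bm{r})$ and the conjugate of the quadratic is $\tfrac12\bm{r}^{\top}\bM(t,0)\bm{r}$ --- and then biconjugating (legitimate by convexity, properness and lower semicontinuity), one gets $\widetilde{\psi}(\bz,t)=\sup_{\bm{r}}\{\langle\bm{r},\bz\rangle-\widetilde{\psi}_{0}^{*}(\exp(t\bA^{\top})\bm{r})-\tfrac12\bm{r}^{\top}\bM(t,0)\bm{r}\}$. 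The substitution $\bm{r}\mapsto\exp(-t\bA^{\top})\bm{r}$ together with the identity $\exp(-t\bA)\bM(t,0)\exp(-t\bA^{\top})=\int_{0}^{t}\exp(-s\bA)\bb\bb^{\top}\exp(-s\bA^{\top})\,\differential s$ --- immediate from the definition of $\bM$ in Appendix \ref{appendix:STMGramianFormula} and a change of integration variable --- then yields (\ref{LowerEnvelope}).

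The hard part will be justifying the Lax--Oleinik representation rigorously in this degenerate setting: $\ell$ is extended-real-valued, equal to $+\infty$ off an affine subspace and only superlinear within it, so the Lagrangian is not of Tonelli type, and $\widetilde{\psi}_{0}$ may itself take the value $+\infty$. One must verify that the candidate value function is the (viscosity) solution of (\ref{HJBapprox}) and recovers the prescribed data at $t=0$ --- precisely what the convex-analytic framework of Appendix \ref{appendix:EnvelopeFormula} and \cite{rockafellar2000convexity} is designed to handle, which is why I would route the argument through that general result rather than through classical viscosity-solution theory. A secondary point worth flagging: without the convexity hypothesis on $\widetilde{\psi}_{0}$, the right-hand side of (\ref{LowerEnvelope}) reproduces only the convex lower-semicontinuous envelope of $\widetilde{\psi}(\cdot,t)$, not $\widetilde{\psi}(\cdot,t)$ itself, which explains why that hypothesis is imposed only for the second half of the statement.
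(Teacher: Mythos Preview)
Your argument is correct and lands on the same formulas, but the route differs from the paper's in execution. The paper proceeds by directly invoking the Rockafellar--Wolenski framework of Appendix~\ref{appendix:EnvelopeFormula}: it checks that $\mathcal{H}(\bz,\bm{\zeta})=\langle\bA\bz,\bm{\zeta}\rangle+\tfrac12(\bb^{\top}\bm{\zeta})^{2}$ satisfies (H1)--(H2), then specializes the abstract formulas (\ref{GeneralUpperEnvelope}) and (\ref{LowerEnvelopeConjugateform}) using \cite[Example 5.2]{rockafellar2000convexity}, computing the relevant Legendre--Fenchel conjugate $\big(\int_{0}^{t}\mathcal{H}_{0}(\exp(-s\bA^{\top})\bm{\zeta})\,\differential s\big)^{*}$ by hand to arrive at (\ref{UpperEnvelope}), and likewise for (\ref{LowerEnvelope}). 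You instead derive (\ref{UpperEnvelope}) from the optimal-control interpretation---the inner minimization being the classical minimum-energy transfer for the controllable pair $(\bA,\bb)$---and obtain (\ref{LowerEnvelope}) by recognizing (\ref{UpperEnvelope}) as an infimal convolution and applying standard Fenchel calculus (conjugate of an infimal convolution, biconjugation, and a linear change of variable). Your derivation is more self-contained and arguably more transparent, since it does not require locating the specific example in \cite{rockafellar2000convexity}; the paper's is shorter because it simply reads off the answer from the general theorem. Both ultimately lean on Appendix~\ref{appendix:EnvelopeFormula} to justify that the resulting expression is indeed the solution of (\ref{HJBapprox}) in the degenerate (non-Tonelli) setting you correctly flag, so your decision to route the rigorous justification through that framework matches the paper's.
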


\begin{proof}
In view of Appendix \ref{appendix:EnvelopeFormula}, notice that (\ref{HJBapprox}) is of the form (\ref{HJBform}) with Hamiltonian $\mathcal{H}(\bz,\bm{\zeta}) = \langle \bA\bz, \bm{\zeta} \rangle + \mathcal{H}_{0}(\bm{\zeta})$, where $\mathcal{H}_{0}(\bm{\zeta}) = (\bb^{\top}\bm{\zeta})^{2}/2$ is convex in $\bm{\zeta}$. Thus, $\mathcal{H}$ is concave in $\bz$, and convex in $\bm{\zeta}$. In this case, it is easy to verify that $\mathcal{H}$ satisfies the conditions (H1)-(H2) of Theorem \ref{RWthm} in Appendix \ref{appendix:EnvelopeFormula} (see  \cite[p. 470]{goebel2002generalized}). Therefore, the envelope formulas (\ref{GeneralUpperEnvelope})-(\ref{GeneralLowerEnvelope}) apply.

Following \cite[Example 5.2]{rockafellar2000convexity}, (\ref{GeneralUpperEnvelope}) becomes
\begin{align}
&\widetilde{\psi}(\bz,t) = \underset{\overline{\bz}}{\inf}\bigg\{ \widetilde{\psi}_{0}(\overline{\bz})\nonumber\\
 &+ \left(\int_{0}^{t}\!\!\!\mathcal{H}_{0}\left(\exp\left(-s\bA^{\top}\right)\bm{\zeta}\right)\:\differential s\right)^{*}\!\!(\cdot)\Bigg\vert_{(\cdot) = \exp(-t\bA)\bz - \overline{\bz}} \bigg\}.
\label{5point11inRockafellar}	
\end{align}
Direct computation gives 
\begin{align*}
&\left(\int_{0}^{t}\mathcal{H}_{0}\left(\exp\left(-s\bA^{\top}\right)\bm{\zeta}\right)\differential s\right)^{*}\left(\cdot\right) \\
&=\frac{1}{2}\left(\cdot\right)^{\top}\!\bigg\{\!\int_{0}^{t}\!\!\exp(-s\bA)\bb\bb^{\top}\!\exp(-s\bA^{\top})\differential s\!\bigg\}^{-1}\left(\cdot\right),
\end{align*}
and therefore, the second summand in (\ref{5point11inRockafellar}) equals
\begin{align}
\!\frac{1}{2}\!\left(\bz - \exp(t\bA)\overline{\bz}\right)^{\!\top}\!\!\bigg\{\!\!\int_{0}^{t}\!\!&\exp((t-s)\bA)\bb\bb^{\top}\!\!\exp((t-s)\bA^{\top})\differential s\!\bigg\}^{\!\!-1}\nonumber\\
&\times\left(\bz - \exp(t\bA)\overline{\bz}\right),
\label{Simplify2ndSummand}
\end{align}
which is precisely $\frac{1}{2}\lVert \bz - \exp(t\bA)\overline{\bz} \rVert_{(\bm{M}(t,0))^{-1}}^{2}$, as claimed in (\ref{UpperEnvelope}). Likewise, specializing (\ref{LowerEnvelopeConjugateform}) for $\mathcal{H}(\bz,\bm{\zeta}) = \langle\bA\bz,\bm{\zeta}\rangle + (\bb^{\top}\bm{\zeta})^{2}/2$ yields (\ref{LowerEnvelope}).
\end{proof}

Both the terminal condition $\widetilde{\psi}_{1}(\bz)$ in Proposition \ref{PropHJBapprox}, and the initial condition $\widetilde{\psi}_{0}(\bz)$ in Proposition \ref{PropEnvelopeApprox} can be written in terms of the endpoint PDFs $\sigma_{0},\sigma_{1}$ in  
(\ref{ConstrPDFzRelax}) as follows.

\begin{proposition}\label{PropTerminalInitial}
Given endpoint PDFs $\sigma_{0},\sigma_{1}$ in  
(\ref{ConstrPDFzRelax}), construct the pair $(\hat{\sigma}_{0},\hat{\sigma}_{1})$ using (\ref{OMTrelaxMarg}), and let $\phi$ be the solution of (\ref{MongeAmpere}). Let the controllability Gramian $\bM$ and the state transition matrix $\bm{\Phi}$ associated with $(\bA,\bb)$ in (\ref{feedbacklinearizedform}), be given by Appendix \ref{appendix:STMGramianFormula}. Then, $\widetilde{\psi}_{1}(\bz)$ in Proposition \ref{PropHJBapprox}, and $\widetilde{\psi}_{0}(\bz)$ in Proposition \ref{PropEnvelopeApprox}, are respectively given by
\begin{align}
&\widetilde{\psi}_{1}(\bz) = \frac{1}{2}\bz^{\top}\bM_{10}^{-1}\bz - \phi^{*}\left(\bm{M}_{10}^{-1/2}\bz\right),
\label{psioneviaphi}\\	
&\widetilde{\psi}_{0}(\bz) = \phi\left(\bM_{10}^{-1/2}\bm{\Phi}_{10}\bz\right) - \frac{1}{2}\bz^{\top}\bm{\Phi}_{10}^{\top}\bM_{10}^{-1}\bm{\Phi}_{10}\bm{z}.
\label{psizeroviaphi}	
\end{align}	
\end{proposition}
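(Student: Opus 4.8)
The plan is to link both endpoint functions $\widetilde{\psi}_0,\widetilde{\psi}_1$ to the Brenier potential $\phi$ of the transformed static transport problem (\ref{OMTrelax}), and then recover $\widetilde{\psi}_1$ from $\widetilde{\psi}_0$ via the already-proven upper envelope formula (\ref{UpperEnvelope}). First I would recall from \cite[Sec.~III.A]{chen2017optimal} that (\ref{TransOCFluidsrelax}) is equivalent to the static Monge--Kantorovich problem with ground cost $c(\bz_0,\bz_1) = \frac{1}{2}\lVert \bz_1 - \bm{\Phi}_{10}\bz_0\rVert_{\bM_{10}^{-1}}^{2}$, the minimum control effort to steer the controllable pair $(\bA,\bb)$ from $\bz_0$ to $\bz_1$ on $[0,1]$. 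Expanding, $c(\bz_0,\bz_1) = \frac{1}{2}\bz_1^{\top}\bM_{10}^{-1}\bz_1 + \frac{1}{2}\bz_0^{\top}\bm{\Phi}_{10}^{\top}\bM_{10}^{-1}\bm{\Phi}_{10}\bz_0 - \langle\hat{\bz}_0,\hat{\bz}_1\rangle$, with $\hat{\bz}_0 := \bM_{10}^{-1/2}\bm{\Phi}_{10}\bz_0$, $\hat{\bz}_1 := \bM_{10}^{-1/2}\bz_1$. The first two summands are marginal-only and (since $\lvert\det\bm{\Phi}_{10}\rvert = 1$ by Appendix~\ref{appendix:STMGramianFormula}) their change of variables produces precisely the Jacobian factors in (\ref{OMTrelaxMarg}); hence the problem reduces to maximizing $\int\langle\hat{\bz}_0,\hat{\bz}_1\rangle\,\differential\hat{\pi}$ over couplings of $\hat{\sigma}_0,\hat{\sigma}_1$, i.e. to (\ref{OMTrelax}). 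Brenier's theorem then gives $\hat{T}^{{\rm{opt}}}=\nabla\phi$ with $\phi$ convex solving (\ref{MongeAmpere}), and $(\phi,\phi^{*})$ as the pair of Kantorovich potentials for the bilinear cost; consequently the Kantorovich potentials $(u_0,u_1)$ of the cost-$c$ problem may be taken as $u_0(\bz) = \frac{1}{2}\bz^{\top}\bm{\Phi}_{10}^{\top}\bM_{10}^{-1}\bm{\Phi}_{10}\bz - \phi(\bM_{10}^{-1/2}\bm{\Phi}_{10}\bz)$ and $u_1(\bz) = \frac{1}{2}\bz^{\top}\bM_{10}^{-1}\bz - \phi^{*}(\bM_{10}^{-1/2}\bz)$.

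Second, I would identify $\widetilde{\psi}_0 = -u_0$, which is exactly (\ref{psizeroviaphi}). Since $\widetilde{\psi}$ is the value function of (\ref{TransOCFluidsrelax}) with optimal feedback $\widetilde{v}^{{\rm{opt}}} = \partial_{z_n}\widetilde{\psi}$ (Proposition~\ref{PropHJBapprox}), along an optimal trajectory $\frac{\differential}{\differential t}\widetilde{\psi}(\bz(t),t) = \frac{1}{2}\lvert\widetilde{v}^{{\rm{opt}}}\rvert^{2}$, so $\widetilde{\psi}(\bz_1,1) - \widetilde{\psi}(\bz_0,0) = c(\bz_0,\bz_1)$ whenever $(\bz_0,\bz_1)$ lies on the optimal coupling, which matches $u_0(\bz_0)+u_1(\bz_1) = c(\bz_0,\bz_1)$ on that set; with the split $(\phi,\phi^{*})$ being the canonical Legendre-conjugate one, this forces $\widetilde{\psi}_0 = -u_0$ and $\widetilde{\psi}_1 = u_1$. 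To make this self-contained---the control only exposes $\partial_{z_n}\widetilde{\psi}$, not the full gradient---I would instead specialize the characteristic equations (\ref{CharODE}) to $\alpha_{\bm{\tau}}\equiv 0$, $\beta_{\bm{\tau}}\equiv 1$, where they become the \emph{linear} system $\dot{\bz} = \bA\bz + \bb\bb^{\top}\bm{\zeta}$, $\dot{\bm{\zeta}} = -\bA^{\top}\bm{\zeta}$ with $\bm{\zeta} = \nabla_{\bz}\widetilde{\psi}$; its induced feedback coincides with the minimum-energy control appearing in (\ref{OMToptvel}), and evaluating at $t=0$ (where $T_0 = {\rm{Id}}$) yields $\nabla\widetilde{\psi}(\bz,0) = \bm{\Phi}_{10}^{\top}\bM_{10}^{-1}\big(T(\bz) - \bm{\Phi}_{10}\bz\big)$ with $T$ as in (\ref{defT}). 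Since $\bm{\Phi}_{10}^{\top}\bM_{10}^{-1}T(\bz) = \nabla_{\bz}\big[\phi(\bM_{10}^{-1/2}\bm{\Phi}_{10}\bz)\big]$, integration in $\bz$ gives (\ref{psizeroviaphi}) (the additive constant being immaterial and pinned by the conjugacy normalization).

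Finally, (\ref{psioneviaphi}) follows by substituting the now-established (\ref{psizeroviaphi}) into (\ref{UpperEnvelope}) at $t=1$, namely $\widetilde{\psi}_1(\bz) = \inf_{\overline{\bz}}\{\widetilde{\psi}_0(\overline{\bz}) + \frac{1}{2}\lVert\bz - \bm{\Phi}_{10}\overline{\bz}\rVert_{\bM_{10}^{-1}}^{2}\}$. Writing $\hat{\bz}_0 = \bM_{10}^{-1/2}\bm{\Phi}_{10}\overline{\bz}$ and $\hat{\bz}_1 = \bM_{10}^{-1/2}\bz$, the bracket collapses (the $\frac{1}{2}\lVert\hat{\bz}_0\rVert^{2}$ contributions cancel) to $\phi(\hat{\bz}_0) - \langle\hat{\bz}_0,\hat{\bz}_1\rangle + \frac{1}{2}\lVert\hat{\bz}_1\rVert^{2}$, so the infimum over $\overline{\bz}$ (a linear bijection onto $\hat{\bz}_0$) equals $\frac{1}{2}\lVert\hat{\bz}_1\rVert^{2} - \sup_{\hat{\bz}_0}\{\langle\hat{\bz}_0,\hat{\bz}_1\rangle - \phi(\hat{\bz}_0)\} = \frac{1}{2}\lVert\hat{\bz}_1\rVert^{2} - \phi^{*}(\hat{\bz}_1) = \frac{1}{2}\bz^{\top}\bM_{10}^{-1}\bz - \phi^{*}(\bM_{10}^{-1/2}\bz)$, which is (\ref{psioneviaphi}).

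The main obstacle is the middle step: cleanly matching the PDE value function $\widetilde{\psi}$ at the two time-boundaries with the two conjugate Kantorovich potentials, with correct signs and additive normalization, given that the optimal control only reveals $\partial_{z_n}\widetilde{\psi}$. I expect to resolve this through the characteristic-equation computation above, which is genuinely tractable here because for $\alpha_{\bm{\tau}}\equiv 0$, $\beta_{\bm{\tau}}\equiv 1$ the costate equation decouples and is linear, making the whole gradient $\nabla\widetilde{\psi}$ (not just $\partial_{z_n}\widetilde{\psi}$) available from Proposition~\ref{PropOMTsemianalytical}; once $\widetilde{\psi}_0$ is correct, $\widetilde{\psi}_1$ is then forced by (\ref{UpperEnvelope}) with no further normalization required.
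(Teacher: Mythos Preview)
Your proposal is correct, and for $\widetilde{\psi}_{0}$ it essentially coincides with the paper's argument: both evaluate (\ref{OMToptvel}) at $t=0$ (where $T_{0}={\rm{Id}}$) and integrate. You are in fact more careful than the paper here, since you flag that $\widetilde{v}^{\rm{opt}}=\bb^{\top}\nabla_{\bz}\widetilde{\psi}$ exposes only $\partial_{z_{n}}\widetilde{\psi}$ and you justify recovering the \emph{full} gradient via the linear characteristic system $(\dot{\bz},\dot{\bm{\zeta}})=(\bA\bz+\bb\bb^{\top}\bm{\zeta},\,-\bA^{\top}\bm{\zeta})$: solving it, the terminal point $\bz(1)=\bm{\Phi}_{10}\bz_{0}+\bM_{10}\bm{\Phi}_{10}^{-\top}\bm{\zeta}_{0}$ must equal $T(\bz_{0})$, which forces $\bm{\zeta}_{0}=\nabla\widetilde{\psi}_{0}(\bz_{0})=\bm{\Phi}_{10}^{\top}\bM_{10}^{-1}(T(\bz_{0})-\bm{\Phi}_{10}\bz_{0})$. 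The paper simply drops the $\bb^{\top}$ on both sides.

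For $\widetilde{\psi}_{1}$, however, your route genuinely differs. The paper treats the two endpoints symmetrically: it evaluates (\ref{OMToptvel}) at $t=1$ (where $T_{1}=T$), inverts $T$ via (\ref{defT}), invokes the conjugate-gradient identity $(\nabla\phi)^{-1}=\nabla\phi^{*}$, and integrates. You instead feed the already-established $\widetilde{\psi}_{0}$ into the upper envelope formula (\ref{UpperEnvelope}) at $t=1$ and collapse the infimum to a pure Legendre--Fenchel transform. Your approach is cleaner---it avoids inverting $T$ and makes the appearance of $\phi^{*}$ transparent as Fenchel duality---but it relies on Proposition~\ref{PropEnvelopeApprox} being available first. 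The paper's approach is self-contained at the level of Proposition~\ref{PropOMTsemianalytical} and treats both boundary times in parallel, at the cost of the extra inversion step.
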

\begin{proof}
To derive (\ref{psioneviaphi}), first note from (\ref{HJBapproxv}) that 
\begin{align}
\widetilde{v}^{\rm{opt}}(\bz,1)=\bb^{\top}\nabla_{\bz}\widetilde{\psi}_{1}.
\label{psi1fromHJB}	
\end{align}
Next, we substitute $t=1$ in (\ref{OMToptvel}), and notice from (\ref{DefPQ}) that $\bm{P}(1)=\bm{0}$, $\bm{Q}(1)=\bm{I}$, which imply $T_{1}(\cdot) = T(\cdot)$. Consequently, the right-hand-side of (\ref{OMToptvel}) at $t=1$ becomes 
\begin{align}
\bb^{\top}\bM_{10}^{-1}\left[\bz - \bm{\Phi}_{10}T^{-1}(\bz)\right].
\label{RHSfort1}	
\end{align}
From (\ref{defT}), we have that
\begin{align}
T^{-1}(\cdot) = \bm{\Phi}_{10}^{-1}\bM_{10}^{1/2}\left(\hat{T}^{\rm{opt}}\right)^{-1}\left(\bM_{10}^{-1/2}\cdot\right),
\label{Tinvfort1}	
\end{align}
which together with (\ref{RHSfort1}) and (\ref{psi1fromHJB}) gives
\begin{align}
\nabla_{\bz}\widetilde{\psi}_{1} = \bM_{10}^{-1}\left[\bz - \bm{M}_{10}^{1/2}\left(\nabla_{\bz}\phi\right)^{-1}\left(\bM_{10}^{-1/2}\bz\right)\right].	
\label{Intermedpsi1}
\end{align}
We clarify here that the notation $\left(\nabla_{\bz}\phi\right)^{-1}\left(\bM_{10}^{-1/2}\bz\right)$ reads inverse function of $\nabla_{\bz}\phi$ evaluated at $\bM_{10}^{-1/2}\bz$.

Denote the Legendre-Fenchel conjugate of $\phi(\bz)$ as $\phi^{*}(\bz^{*})$, and recall the well-known relation $\left(\nabla_{\bz}\phi\right)^{-1} = \nabla_{\bz^{*}}\phi^{*}$. Thus, (\ref{Intermedpsi1}) yields (\ref{psioneviaphi}). 

To derive (\ref{psizeroviaphi}), we substitute $t=0$ in (\ref{OMToptvel}), and notice from (\ref{DefPQ}) that $\bm{P}(0)=\bm{I}$, $\bm{Q}(0)=\bm{0}$, which imply $T_{0},T_{0}^{-1}$ are identity maps. Consequently, the right-hand-side of (\ref{OMToptvel}) at $t=0$ becomes 
\begin{align}
\bb^{\top}\bm{\Phi}_{10}^{\top}\bm{M}_{10}^{-1}\left[\bm{M}_{10}^{1/2}\nabla_{\bz}\phi\left(\bM_{10}^{-1/2}\bm{\Phi}_{10}\bz\right) - \bm{\Phi}_{10}\bz\right].
\label{RHSapproxcontrol}	
\end{align}
On the other hand, from (\ref{HJBapproxv}) we have $\widetilde{v}^{\rm{opt}}(\bz,0)=\bb^{\top}\nabla_{\bz}\widetilde{\psi}_{0}$. Equating these, we obtain (\ref{psizeroviaphi}).
\end{proof}

\subsubsection{Characteristic representation for $\widetilde{\psi}$}\label{subsubCharApprox} 
As in Section \ref{subsubChar}, an alternative representation of $\widetilde{\psi}(\bz,t)$ in Proposition \ref{PropHJBapprox} can be obtained via characteristic ODEs for the first order PDE (\ref{HJBapprox}). In particular, specializing Theorem \ref{ThmJHBoptMOC} to the case $\alpha_{\bm{\tau}}(\bz)\equiv 0$, $\beta_{\bm{\tau}}(\bz)\equiv 1$, the characteristic equations reduce to 
\begin{equation}
	\begin{aligned}
&\frac{\differential t}{\differential s}=1,	\quad \frac{\differential \bm{z}}{\differential s}=\bA \bm{z} + \bb\bb^{\top} \bm{\zeta}, \quad \frac{\differential \bm{\zeta}}{\differential s}=-\bA^{\top} \bm{\zeta},\\
&\frac{\differential \theta} {\differential s}=0, \quad \frac{\differential \widetilde{\psi}} {\differential s}=(\bb^{\top} \bm{\zeta})^2/2,
\end{aligned}
\label{CharODEapprox}
\end{equation}
where the tuple $\left(t(s),\bz(s),\bm{\zeta}(s),\theta(s),\widetilde{\psi}(s)\right)$ parametrized via $s$, denote the characteristic strip  associated with (\ref{HJBapprox}). We next solve the system of ODEs (\ref{CharODEapprox}) to furnish the solution of (\ref{HJBapprox}) as function of the initial data $\widetilde{\psi}_{0}$, and independent variables $\bz,t$.

From (\ref{CharODEapprox}), notice that $s=t$, and let the initial condition $(\bz(0),\bm{\zeta}(0),\theta(0),\widetilde{\psi}(0)) \equiv (\bz_{0},\bm{\zeta}_{0},\theta_{0},\widetilde{\psi}_{0})$, where $\widetilde{\psi}_{0}(\bz_{0})$ is given by (\ref{psizeroviaphi}). Since $\nabla_{\bz_{0}}\widetilde{\psi}	_{0} = \langle\nabla_{\bz}\widetilde{\psi}_{0}, \nabla_{\bz_{0}}\bz\rangle\big\rvert_{s=0} = \bm{\zeta}_{0}$, therefore using (\ref{psizeroviaphi}), we can determine $\bm{\zeta}_{0}(\bz_{0})$ as 
\begin{align}
\bm{\zeta}_{0} = \exp(\bA^{\top})\bM_{10}^{-1/2}&\nabla\phi\left(\bM_{10}^{-1/2}\exp(\bA)\bz_{0}\right) \nonumber\\
&- \exp(\bA^{\top})\bM_{10}^{-1}\exp(\bA)\bm{z}_{0}. 
\label{zeta0asfuncofz0}
\end{align}
Furthermore, from (\ref{CharODEapprox}), we have
\begin{subequations}
\begin{align} 
\bm{\zeta}(t) &= \exp(-t\bA^{\top})\bm{\zeta}_0,\label{zetaapprox}\\
\bz(t) &= \exp(t\bA)\!\left[\bz_{0} \!+\! \bigg\{\!\!\int_{0}^{t}\!\!\exp(-\tau\bA)\bb\bb^{\!\top}\!\exp(-\tau\bA^{\!\top}\!)\differential\tau\!\!\bigg\}\bm{\zeta}_{0}\right]\nonumber\\
&= \exp(t\bA)\bz_{0} + \bm{M}(t,0)\exp(-t\bA^{\top})\bm{\zeta}_{0},\label{zapprox}	
\end{align}
\label{CharApproxSoln}	
\end{subequations}
where the last step follows by introducing a change-of-variable $\nu := t-\tau$ in the integral within curly braces, with lower limit $\nu=t$ and upper limit $\nu=0$. 

Combining (\ref{zeta0asfuncofz0}) and (\ref{CharApproxSoln}), we can express $\bm{\zeta},\bz$ as function of $t,\bz_{0}$, i.e., 
\begin{subequations}
\begin{align}
&\bm{\zeta} = \exp\left((1-t)\bA^{\top}\right)\bM_{10}^{-1/2}\nabla\phi\left(\bM_{10}^{-1/2}\exp(\bA)\bz_{0}\right) \nonumber\\
& \qquad\qquad\qquad - \exp((1-t)\bA^{\top})\bM_{10}^{-1}\exp(\bA)\bm{z}_{0}, \label{zetaCharSoln}	\\
&\bz = \exp(t\bA)\bz_{0} + \bM(t,0)\bm{\zeta}.\label{zzetaz0}
\end{align}
\label{zzetaasfuncoftz0}	
\end{subequations}
Notice that the last ODE in (\ref{CharODEapprox}) is 
\[\frac{\differential\widetilde{\psi}}{\differential t} = \frac{1}{2}\bm{\zeta}^{\top}\bb\bb^{\top}\bm{\zeta},\]
which combined with (\ref{zetaapprox}), followed by integration results
\begin{align}
&\widetilde{\psi}(\bz,t) - \widetilde{\psi}_{0}(\bz) = \frac{1}{2}\bm{\zeta}^{\top}\bM(t,0)\bm{\zeta} \nonumber\\
\Rightarrow &\widetilde{\psi}(\bz,t) =\widetilde{\psi}_{0}(\bz) + \frac{1}{2}\Vert \bz - \bm{\Phi}(t,0)\bz_{0}(\bz,t)\Vert_{\left(\bM(t,0)\right)^{-1}}^{2},
\label{psiviaz0t}	
\end{align}
where the last line follows from (\ref{zzetaz0}). 

Let the nonsingular matrix $\bm{R}:=\bM_{10}^{-1/2}\exp(\bA)$. By eliminating $\bm{\zeta}$ from (\ref{zetaCharSoln})-(\ref{zzetaz0}), the map $\bz_{0}(\bz,t)$ in (\ref{psiviaz0t}) can be obtained as the solution of the nonlinear equation
\begin{eqnarray}
\exp(t\bA)\bz_{0} + \bM(t,0)\exp(-t\bA^{\top})\left[\bm{R}^{\top}\hat{T}^{{\rm{opt}}}\left(\bm{R}\bz_{0}\right) \right.\nonumber\\
 \left.- \bm{R}^{\top}\bm{R}\bz_{0}\right] = \bz.
\label{z0viaz}	
\end{eqnarray}
The existence and uniqueness of solution (\ref{psiviaz0t})-(\ref{z0viaz}) follows from Appendix \ref{appendix:MOCinvfnthm}. Therefore, (\ref{psiviaz0t})-(\ref{z0viaz}) together furnish the solution of (\ref{HJBapprox}). 


\section{PDF Steering via Schr{\"o}dinger Bridge}\label{SecSchrodinger}
We now outline a computational framework for solving (\ref{TransOCFluidsrelax}) via the stochastic control formulation of the classical Schr{\"o}dinger bridge \cite{schrodinger1931umkehrung,schrodinger1932theorie}. In its original formulation \cite{schrodinger1931umkehrung,schrodinger1932theorie}, Schr{\"o}dinger bridge determines a probability measure $\mathbb{P}$ on $\Omega:=C([0,1],\mathcal{X})$, i.e., on continuous paths from $t=0$ to $t=1$ in the state space $\mathcal{X}\subseteq\mathbb{R}^{n}$, that minimizes the relative entropy $\int_{\Omega}\log(\differential\mathbb{P}/\differential\mathbb{Q}) \differential\mathbb{P}$ where $\mathbb{Q}$ is the probability measure induced by a prescribed prior Markovian dynamics, and $\differential\mathbb{P}/\differential\mathbb{Q}$ denotes the Radon-Nikodym derivative. The measure $\mathbb{P}$ is assumed to be absolutely continuous w.r.t. $\mathbb{Q}$, and is constrained to admit endpoint marginal measures $\rho_{0}(\bx)\differential\bx$ at $t=0$, and $\rho_{1}(\bx)\differential\bx$ at $t=1$, with given $\rho_{0},\rho_{1}$ satisfying assumption \textbf{(A1)} in Section \ref{SubsecReformulation}. We refer the readers to \cite{bernstein1932liaisons,beurling1960automorphism,jamison1974reciprocal,wakolbinger1990schrodinger} for representative references; see \cite{leonard2014survey} for a recent survey. 

Here we focus on the stochastic control formulation \cite{dai1991stochastic,blaquiere1992controllability} of the Schr{\"o}dinger bridge viewed as a dynamic stochastic regularization of (\ref{TransOCFluidsrelax}), i.e., we consider the problem
\begin{subequations}
\begin{align}
&\qquad\;\underset{(\widetilde{\sigma},\widetilde{v})}{\text{inf}}
& & \int_{\mathcal{Z}} \int_{0}^{1} \frac{1} {2}   \lvert \widetilde{v}(\bz,t) \rvert^{2} \,\widetilde{\sigma}(\bz,t) \: \differential t \: \differential\bz\label{ObjSchrodingerRelax}\\ 
& \text{subject to} & &  \dfrac{\partial \widetilde{\sigma}}{\partial t} + \nabla_{\bz} \cdot ((\bA\bz + \bb \widetilde{v} )\widetilde{\sigma}) = \epsilon\Delta\widetilde{\sigma},\label{SchrodingerConstrFPKRelax}\\
& & &  \widetilde{\sigma}(\bz,0)=\sigma_{0}(\bz),  \quad \widetilde{\sigma}(\bz,1)=\sigma_{1}(\bz).\label{SchrodingerConstrPDFzRelax}
\end{align}
\label{RelaxedSchrodinger} 
\end{subequations}
In above, the difference with (\ref{TransOCFluidsrelax}) is that the controlled Liouville PDE (\ref{ConstrLiouvillezRelax}) is replaced by the Fokker-Planck-Kolmogorov PDE (\ref{SchrodingerConstrFPKRelax}) with same drift as before, and diffusion coefficient $\sqrt{2\epsilon}$, for some regularization parameter $\epsilon > 0$. Put differently, replacing (\ref{ConstrLiouvillezRelax}) with (\ref{SchrodingerConstrFPKRelax}) is equivalent to regularizing the controlled sample path ODE 
\begin{align}
\dot{\bz}(t) = \bA\bz(t) + \bb\:\widetilde{v}(\bz,t),
\label{ControlledSamplePathODE}	
\end{align}
with the It\^{o} SDE
\begin{align}
\differential\bz(t) = \left(\bA\bz(t) + \bb\:\widetilde{v}(\bz,t)\right)\:\differential t + \sqrt{2\epsilon}\:\bb\:\differential w(t),
\label{ControlledSamplePathSDE}	
\end{align}
where $w(t)$ denotes standard Wiener process. As $\epsilon\downarrow 0$, the solution of (\ref{RelaxedSchrodinger}), which we denote by $(\widetilde{\sigma}_{\epsilon}^{\rm{opt}},\widetilde{v}_{\epsilon}^{\rm{opt}})$, is known \cite{mikami2004monge,leonard2012schrodinger,chen2016relation,chen2017optimal} to converge to $(\widetilde{\sigma}^{\rm{opt}},\widetilde{v}^{\rm{opt}})$, i.e., to the solution of (\ref{TransOCFluidsrelax}). This suggests numerically solving (\ref{RelaxedSchrodinger}) for small $\epsilon$ to approximate the solution of (\ref{TransOCFluidsrelax}). The idea is appealing since the solution of (\ref{RelaxedSchrodinger}) is known to be
\begin{subequations}
\begin{align}
&\widetilde{\sigma}_{\epsilon}^{\rm{opt}}(\bz,t) = \hat{h}(\bz,t)h(\bz,t), \quad 0\leq t\leq1,\label{SBPsolnPDF}\\
& \widetilde{v}^{\rm{opt}}_{\epsilon}(\bz,t) = 2\epsilon\:\bb^{\top}\nabla_{\bz} h(\bz,t), \quad 0\leq t\leq1,\label{SBPsolnControl}\\
&\hat{h}\left(\bz,t\right) = \int_{\mathcal{Z}}\kappa\left(0,\overline{\bz},t,\bz\right) \hat{h}_{0}(\overline{\bz})\differential\overline{\bz}, \quad t\geq0, \label{SBPsolnForwardKernel}\\
& h\left(\bz,t\right) = \int_{\mathcal{Z}}\kappa\left(t,\bz,1,\overline{\bz}\right) h_{1}(\overline{\bz})\differential\overline{\bz},	\quad t\leq1,\label{SBPsolnBackwardKernel}
\end{align}
\label{SBPsystem}	
\end{subequations}
where $\kappa\left(s,\overline{\bz},t,\bz\right)$ is the Markov kernel associated with (\ref{SchrodingerConstrFPKRelax}) that depends on $\epsilon$, and the factors in (\ref{SBPsolnPDF}) have boundary values $\hat{h}_{0}(\bz):=\hat{h}(\bz,0) \geq 0$ and $h_{1}(\bz):=h(\bz,1)\geq 0$. Combining (\ref{SBPsystem}) with the boundary conditions (\ref{ConstrPDFzRelax}) yield the following system of nonlinear integral equations
\begin{subequations}
\begin{align}
\hat{h}_{0}(\bz)\int_{\mathcal{Z}}\kappa\left(0,\bz,1,\overline{\bz}\right) h_{1}(\overline{\bz})\differential\overline{\bz} &= \sigma_{0}(\bz),\label{IE0}\\
h_{1}(\bz)\int_{\mathcal{Z}}\kappa\left(0,\overline{\bz},1,\bz\right) \hat{h}_{0}(\overline{\bz})\differential\overline{\bz} &= \sigma_{1}(\bz),\label{IE1}		
\end{align}
\label{IntegralEquations}	
\end{subequations}
which can be solved for the pair $(\hat{h}_{0},h_{1})$ as fixed point recursion with guaranteed convergence properties; see \cite{chen2016entropic}. The converged pair $(\hat{h}_{0},h_{1})$ can then be used in (\ref{SBPsolnForwardKernel})-(\ref{SBPsolnBackwardKernel}) to determine the pair $(\hat{h}(\bz,t),h(\bz,t))$, and thereby determine the pair $(\widetilde{\sigma}_{\epsilon}^{\rm{opt}}(\bz,t),\widetilde{v}^{\rm{opt}}_{\epsilon}(\bz,t))$ via (\ref{SBPsolnPDF})-(\ref{SBPsolnControl}). We remind the readers that since $\kappa$ depends on $\epsilon$, so does the pair $(\hat{h}_{0},h_{1})$, and thus the pair $(\hat{h}(\bz,t),h(\bz,t))$ too depends on $\epsilon$. We will pursue this approach to solve (\ref{RelaxedSchrodinger}) in our numerical example in Section \ref{SecNumExample}. In contrast with the Schr\"{o}dinger bridge regularization, numerical solution of dynamic optimal transport formulation such as (\ref{TransOCFluidsrelax}) remains challenging in high dimensional state space -- some specialized algorithms \cite{benamou2000computational,angenent2003minimizing} for the same have been proposed in the literature.

\begin{algorithm}
    \caption{Schr\"{o}dinger Bridge Regularization (SBR) to compute  $\rho_{\epsilon}(\bx,t)$}
    \label{AlgoSBR}
    \begin{algorithmic}[1] 
        \Procedure{SBR}{$\bm{\rho}_0$, $\bm{\rho}_1, n,{\rm{nIter}},\bm{\tau},\kappa^{\rm{B}},\epsilon,\delta$}

            \State $\bA \gets {\rm{diag}}({\rm{ones}}(n-1,1),1)$
            \State $\bm{b} \gets [{\rm{zeros}}(1,n-1),1]^{\top} $ 
            
            \State $\bm{\sigma}_0 \gets \bm{\tau}\sharp\bm{\rho}_0$ \Comment{pushforward $\rho_{0}$}
             \State $\bm{\sigma}_1 \gets \bm{\tau}\sharp\bm{\rho}_1$ \Comment{pushforward $\rho_{1}$}
            \State $\bm{K}_{st} \gets {\rm{LTIkernel}}(\bA,\bm{b},\kappa^{\rm{B}}(\epsilon,s,t),s,t)$ \Comment{kernel (64)}
            
            \State $\hat{\bm{h}}_1 \gets \left[\bm{1}_{n\times 1}, \bm{0}_{n\times ({\rm{nIter}}-1)}\right]$ \Comment{initialize}
            \State $\bm{h}_1 \gets \left[ \bm{0}_{n \times {\rm{nIter}}}\right] $
            \State $\bm{h}_0 \gets \left[ \bm{0}_{n \times {\rm{nIter}}}\right] $
            \State $\hat{\bm{h}}_0 \gets \left[ \bm{0}_{n \times {\rm{nIter}}}\right] $ 
            \State $\ell=1$ \Comment{iteration index}
            
            \While{$\ell \leq {\rm{nIter}} $} \Comment{fixed point iteration}
                \State $\bm{h}_1(:,\ell +1) \gets  \bm{\rho}_1 \oslash \hat{\bm{h}}_1(:,\ell) $
                \State $\bm{h}_0(:,\ell+1) \gets \bm{K}_{01}\bm{h}_1(:,\ell +1)   $
                \State $\hat{\bm{h}}_0(:,\ell+1) \gets \bm{\rho}_0 \oslash  \bm{h}_0(:,\ell+1) $
                \State $\hat{\bm{h}}_1(:,\ell+1) \gets \bm{K}^{\top}_{01}\bm{h}_1(:,\ell +1)  $
                \If{$\parallel \hat{\bm{h}}_0(:,\ell+1) - \hat{\bm{h}}_0(:,\ell)\parallel < \delta \;\And\; \parallel \bm{h}_1(:,\ell+1)- \bm{h}_1(:,\ell)\parallel < \delta$ } \Comment{error within tolerance}
                \State break
                \Else
                \State $\ell \gets \ell + 1$
                \EndIf
            \EndWhile\label{euclidendwhile}
            \State \textbf{return} $\hat{\bm{h}}_0(:,\ell+1),\bm{h}_1(:,\ell+1)$ \Comment{converged pair}
            \State $\hat{\bm{h}}_t \gets K^{\top}_{0t} \hat{\bm{h}}_0(:,\ell+1)$
            \State $\bm{h}_t \gets K_{t1}\bm{h}_1(:,\ell+1)$  \Comment{transient Schr\"{o}dinger factors}
            \State $\bm{\sigma}_{\epsilon,t} \gets \bm{h}_t \odot \bm{\hat{h}}_t $ 
            \State $\bm{\rho}_{\epsilon,t} \gets \bm{\sigma}_{\epsilon,t}/ \lvert {\rm{det}} \left(\nabla \bm{\tau}\right) \rvert  $ \Comment{from (16a)}
            
            \EndProcedure
    \end{algorithmic}
\end{algorithm}

In passing, we mention that (\ref{IntegralEquations}) is not the classical Schr\"{o}dinger system in the sense that the Markov kernel $\kappa$ therein is \emph{not} the Brownian kernel $\kappa^{{\rm{B}}}$ associated with the $n$-dimensional scaled Wiener process $\sqrt{2\epsilon}\:\differential \bm{w}(t)$, given by
\begin{align}
\kappa^{{\rm{B}}}\left(s,\overline{\bz},t,\bz\right) = \left(4\pi(t-s)\epsilon\right)^{-n/2}\exp\left(\!-\frac{\Vert\overline{\bz}-\bz\Vert_{2}^{2}}{4\epsilon(t-s)}\!\right).
\label{BrownianKernel}	
\end{align}
But the two can be related through the formula \cite[Appendix B]{chen2017optimal}
\begin{align}
&\kappa\left(s,\overline{\bz},t,\bz\right) = \left(t-s\right)^{n/2} \det\left(\bM(t,s)\right)^{-1/2} \nonumber\\
&\times \kappa^{{\rm{B}}}\left(s,\left(\bM(t,s)\right)^{-1/2}\bm{\Phi}(t,s)\overline{\bz},t,\left(\bM(t,s)\right)^{-1/2}\bz\right),
\label{kappaViaBrownianKernel}	
\end{align}
where the matrices $\bm{\Phi},\bM$ are as in Appendix \ref{appendix:STMGramianFormula}. The formula (\ref{kappaViaBrownianKernel}) allows us to perform the fixed point recursion in a standard Schr\"{o}dinger system where the $\kappa$ in (\ref{IntegralEquations}) is replaced by $\kappa^{{\rm{B}}}$ given by (\ref{BrownianKernel}), and the PDF pair $\left(\sigma_{0},\sigma_{1}\right)$ in (\ref{IntegralEquations}) is replaced by $\left(\hat{\sigma}_{0},\hat{\sigma}_{1}\right)$ given by (\ref{OMTrelaxMarg}). Let the solution of the resulting classical Schr\"{o}dinger system be $(\hat{h}_{0}^{{\rm{B}}},h_{1}^{{\rm{B}}})$. Then the pair $(\hat{h}_{0},h_{1})$ in (\ref{IntegralEquations}) can be recovered\footnote{Recall that $\vert\det\left(\bm{\Phi}_{10}\right)\vert = 1$ in our case.} as \cite[eqn. (63)]{chen2017optimal}
\begin{subequations}
\begin{align}
\hat{h}_{0}(\bz) &= \hat{h}_{0}^{{\rm{B}}}\left(\bM_{10}^{-1/2}\bm{\Phi}_{10}\bz\right), \\
h_{1}(\bz) &= \det\left(\bM_{10}\right)^{-1/2}h_{1}^{{\rm{B}}}\left(\bM_{10}^{-1/2}\bz\right).  	
\end{align}
\label{RecoverLinSchrodingerSystemSoln}	
\end{subequations}

We summarize the computational pipeline in Algorithm \ref{AlgoSBR} and employ the same for the numerical examples presented next in Section \ref{SecNumExample}.

\begin{figure*}[t]
\centering
\begin{subfigure}{0.5\textwidth}
    \centering
        \includegraphics[width=0.98\linewidth]{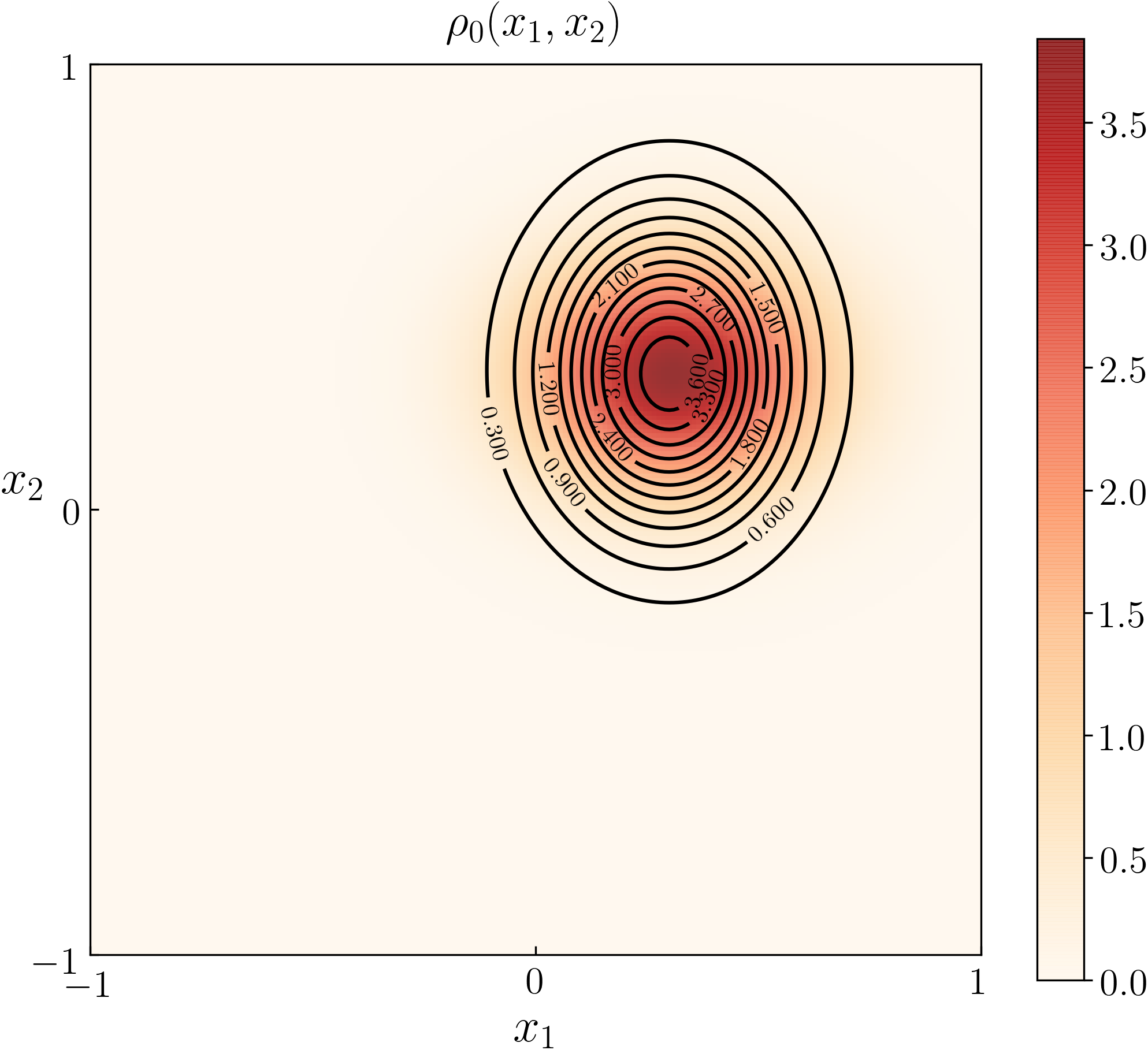}
        \caption{}
    \end{subfigure}%
\begin{subfigure}{0.5\textwidth}
    \centering
        \includegraphics[width=0.98\linewidth]{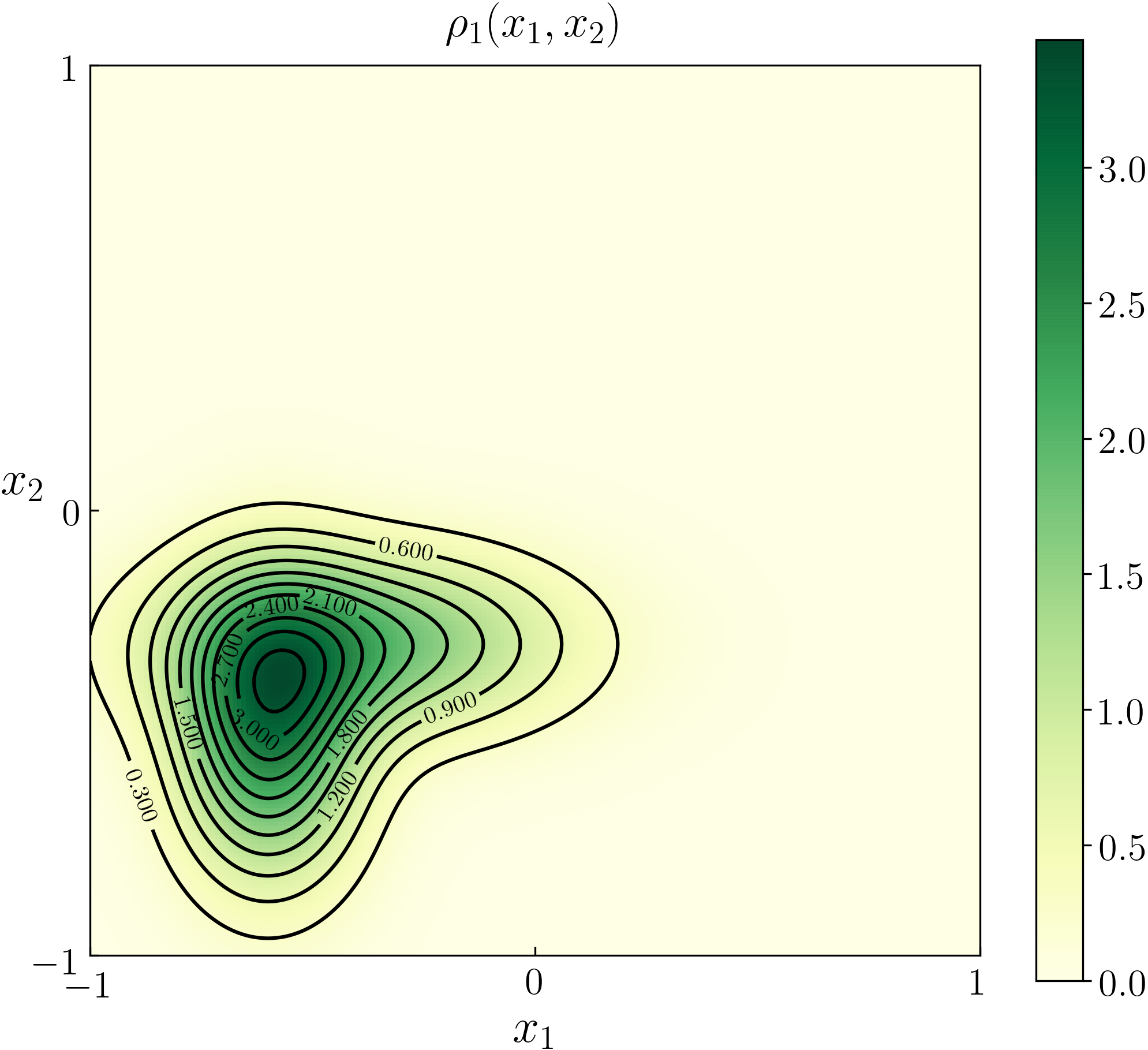}
        \caption{}
    \end{subfigure}%
\caption{\small{For Example 1, (a) the initial PDF $\rho_{0}$, and (b) the terminal PDF $\rho_{1}$. The corresponding colorbars show the (bivariate) joint PDF values.}}
\label{endpointPDFs2d}
\end{figure*}

\section{Numerical Examples}\label{SecNumExample}
In this Section, we provide two numerical examples of steering a prescribed joint state PDF $\rho_{0}(\bx)$ at $t=0$, to another prescribed $\rho_{1}(\bx)$ at $t=1$, subject to controlled dynamics $\dot{\bx}=\bm{f}(\bx)+\bm{g}(\bx)u$. In the following, Example 1 has two states and is easy to visualize while Example 2 has three states.

\begin{figure*}[t]
\centering
        \includegraphics[width=0.96\linewidth]{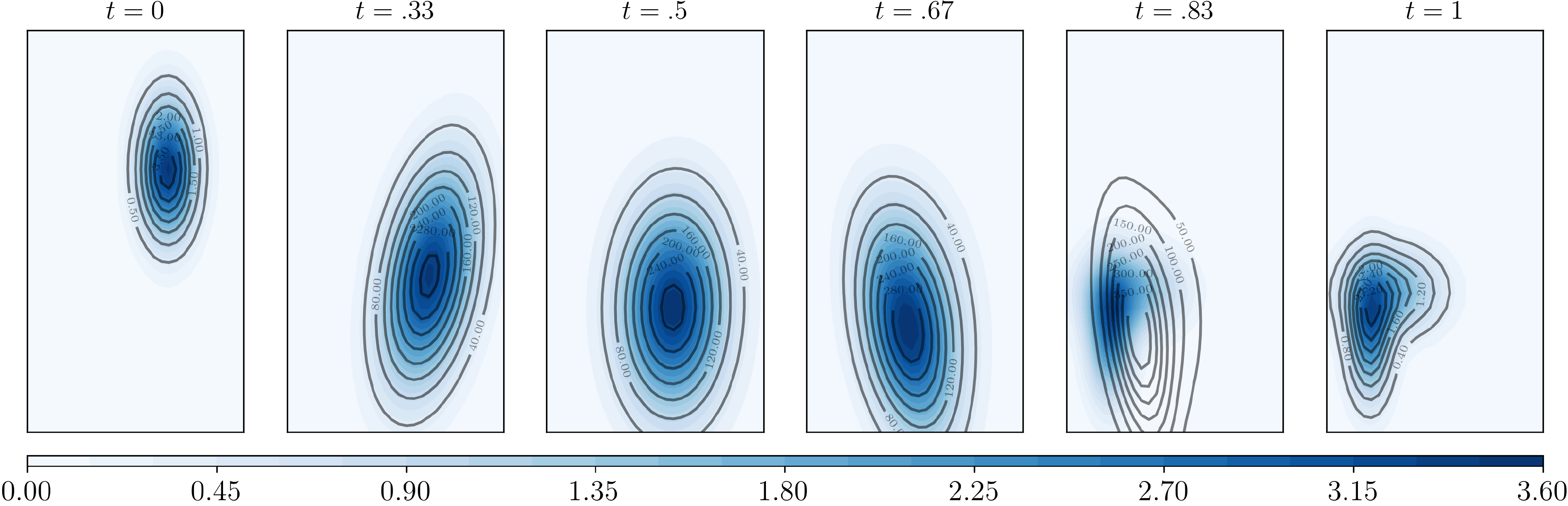}
\caption{\small{For Example 1, the $\epsilon$-regularized controlled transient joint state PDF $\rho_{\epsilon}(\bx,t)$ evolution is shown over $[-1,1]^{2}$, $t\in[0,1]$, computed using the framework described in Section \ref{SecSchrodinger}. The horizontal colorbar shows the joint PDF values. The transient evolution satisfies the endpoint data in Fig. \ref{endpointPDFs2d}.}}
\label{transPDFs2d}
\end{figure*}

\subsection{Example 1}\label{2state1controlExample}
We consider $\bm{x}\in\mathbb{R}^{2}$ with the single input controlled dynamics
\begin{align}
    \begin{pmatrix}
    \dot{x}_{1}\\
    \dot{x}_{2}	
    \end{pmatrix}
 =\! \underbrace{\begin{pmatrix}
     x_2 \\
-x_1+ \frac{1}{2}\left(1-x_{1}^2\right)x_{2}
    \end{pmatrix}}_{\bm{f}(\bx)} \!+\!
    \underbrace{\begin{pmatrix}
0  \\
     1
    \end{pmatrix}}_{\bm{g}(\bx)} u,
    \label{DiffFlatEx2d}
\end{align}
that satisfies the two conditions in Proposition \ref{ExistenceOflambda}, and we have
\begin{align}
	L_{\bm{g}} \lambda(\bx) = \dfrac{\partial\lambda}{\partial x_{2}} = 0, \quad L_{{\rm{ad}}_{\bm{f}}\bm{g}} \lambda(\bx) = \dfrac{\partial\lambda}{\partial x_{1}} + \dfrac{1}{2}\left(1-x_{1}^2\right)\dfrac{\partial\lambda}{\partial x_{2}} \neq 0,
	\end{align}	
for which $\lambda(\bx)=x_{1}$ is an admissible solution. Consequently, we obtain the trio of maps:
\begin{align}
	\bm{\tau}(\bx) = (x_{1}, x_{2})^{\top}, \quad \alpha(\bx) = -x_{1}+\frac{1}{2}(1-x_{1}^{2})x_{2}, \quad\beta(\bx)=1.
\end{align}	
In this case, $\mathcal{X}=\mathbb{R}^{2}$ since $\bm{\tau}$ is identity map. For $i=0,1$, we set
\begin{equation} \label{MargPDF1}
 \rho_{i}(\bx) := c_{1i} \mathcal{N}\left(\bm{\mu}_{1i},\bm{\Sigma}_{1i}\right) + c_{2i} \mathcal{N}\left(\bm{\mu}_{2i},\bm{\Sigma}_{2i}\right),
\end{equation}
with $(c_{10}, c_{20})=(0.19,0.81)$, $\bm{\mu}_{10}=(0.30,0.35)^{\top}$,
    $\bm{\mu}_{20}=(0.30,0.30)^{\top}$,
    $\bm{\Sigma}_{10} =\rm{diag}(0.05 ,0.067)$, 
    $\bm{\Sigma}_{20}=\rm{diag}(0.03,0.05)$, and $(c_{11}, c_{21})=(0.50,0.50)$, $\bm{\mu}_{11}=(-0.40,-0.30)^{\top}$,
    $\bm{\mu}_{21}=(-0.60,-0.50)^{\top}$,
    $\bm{\Sigma}_{11} =\rm{diag}(0.095 ,0.02)$, 
    $\bm{\Sigma}_{21}=\rm{diag}(0.02,0.05)$. The endpoint PDFs $\rho_{0},\rho_{1}$ thus generated, are shown in Fig. \ref{endpointPDFs2d}.

In Fig. \ref{transPDFs2d}, we show the $\epsilon$-regularized controlled joint state PDF $\rho_{\epsilon}(\bx,t)$ evolution for $t\in[0,1]$ and $\bx\in[-1,1]^{2}$ subject to (\ref{DiffFlatEx2d}) and endpoint data shown in Fig. \ref{endpointPDFs2d}. These were computed using the framework described in Section \ref{SecSchrodinger} with the regularization parameter $\epsilon = 10^{-3}$.  

\subsection{Example 2}\label{3state1controlExample}
We now consider $\bm{x}\in\mathbb{R}^{3}$ with the controlled dynamics 
\begin{align}
    \begin{pmatrix}
    \dot{x}_{1}\\
    \dot{x}_{2}\\
    \dot{x}_{3}	
    \end{pmatrix}
 =\! \underbrace{\begin{pmatrix}
     x_3 \\
     -x_2 \\
     -x_1+x_{2}-2x_2^2
    \end{pmatrix}}_{\bm{f}(\bx)} \!+\!
    \underbrace{\begin{pmatrix}
     -x_2 \\
     1  \\
     2x_2
    \end{pmatrix}}_{\bm{g}(\bx)} u,
    \label{DiffFlatEx}
\end{align} 
which satisfies the two conditions in Proposition \ref{ExistenceOflambda}, and hence is static state feedback linearizable. In particular, (\ref{lambdacond}) yields
\begin{subequations}
	\begin{align}
	L_{\bm{g}} \lambda(\bx) &= -x_2 \frac{\partial \lambda}{\partial x_1} + \frac{\partial \lambda}{\partial x_2} + 2x_2 \frac{\partial \lambda}{\partial x_3} = 0,\\
      L_{{\rm{ad}}_{\bm{f}}\bm{g}} \lambda(\bx) &= -x_2 \frac{\partial \lambda}{\partial x_1} + \frac{\partial \lambda}{\partial x_2} + (1-x_2) \frac{\partial \lambda}{\partial x_3} = 0,\\
      L_{{\rm{ad}}_{\bm{f}}^{2}\bm{g}} \lambda(\bx) &= \frac{\partial \lambda}{\partial x_1} + \frac{\partial \lambda}{\partial x_2} + (2x_{2}-1)\frac{\partial \lambda}{\partial x_3} \neq 0, 
\end{align}
\end{subequations}
for which $\lambda(\bx) = x_1 + x_2^{2}/2$ is an admissible solution in 
\begin{align}
\mathcal{X}:=\{\bx\in\mathbb{R}^{3}\mid x_{2}\neq -1\}.
\label{XsetExample}	
\end{align}
Furthermore, in this case,
\begin{equation}
    \begin{aligned}
    L_{\bbf}\lambda(\bx) &= x_3-x_2^2, \quad L_{\bbf}^2\lambda(\bx) = -x_1+x_2, \\
    L_{\bbf}^3\lambda(\bx) & = -x_3-x_2,    \quad L_{\bm{g}}L_{\bbf}^{2}\lambda(\bx) = 1+x_2,
    \end{aligned}
\end{equation}
using which in (\ref{alphabetatau}), results in the trio of maps
\begin{equation}
\begin{aligned}
 \bm{\tau}(\bx) &= \left(x_1+x_{2}^{2}/2,\;-x_{2}^{2}+x_{3},\;-x_{1}+x_{2}\right)^{\top}, \\ 
 \alpha(\bx) &= \frac{x_2+x_{3}}{1+x_2}, \quad \beta(\bx) = \frac{1}{1+x_2}.
\end{aligned}
\label{ExampleTuple}
\end{equation}
The feedback linearization of (\ref{DiffFlatEx}) via $(\bm{\tau},\alpha,\beta)$ as in (\ref{ExampleTuple}) is valid for all $(x_{1},x_{2},x_{3})\in\mathbb{R}^{3}$ such that $\det(\nabla_{\bx}\bm{\tau}) = -1-x_{2}\neq 0$, i.e., in $\mathcal{X}$ given by (\ref{XsetExample}).

The plane $x_{2} = -1$ splits $\mathcal{X}$ into two disjoint regions: $\mathcal{X}_L$ to the left of the plane, and $\mathcal{X}_R$ to the right of the plane. For $\bz=\bm{\tau}(\bx)$, the feedback linearized form is $\dot{\bz} = \bA \bz + \bb v$ with $\bA =\left[\bm{0}\vert \bm{e}_{1} \vert \bm{e}_{2}\right] $ and $\bb= \bm{e}_3$. Since the feedback linearization in original coordinates is valid in $\mathcal{X}=\mathcal{X}_L\cup\mathcal{X}_R$, in the numerical simulation, we choose the end point PDFs $\rho_{0}(\bx),\rho_{1}(\bx)$ so that the PDF evolution remains in $\mathcal{X}_R$. In particular, we fix $\rho_{0}$ as in (\ref{MargPDF1}) where $(c_{10}, c_{20})=(0.19,0.81)$, $\bm{\mu}_{10}=(0.30,0.35,0.50)^{\top}$,
    $\bm{\mu}_{20}=(0.30,0.30,0.50)^{\top}$,
    $\bm{\Sigma}_{10} =\rm{diag}(0.05 ,0.067 ,0.04)$, 
    $\bm{\Sigma}_{20}=\rm{diag}(0.03,0.05, 0.05)$.
Our goal is to steer the two component Gaussian mixture $\rho_{0}$ to another two component Gaussian mixture
\begin{equation}\label{MargPDF2}
  \rho_1(\bx) :=c_{11} \mathcal{N}\left(\bm{\mu}_{11},\bm{\Sigma}_{11}\right) + c_{21} \mathcal{N}\left(\bm{\mu}_{21},\bm{\Sigma}_{21}\right),
\end{equation}
with $(c_{11}, c_{21})=(0.39,0.61)$, $\bm{\mu}_{11}=(0.50,0.40,0.30)^{\top}$,
    $\bm{\mu}_{21}=(0.80,0.60,0.40)^{\top}$,
    $\bm{\Sigma}_{11}=\rm{diag}(0.095,0.02,0.04)$,
    $\bm{\Sigma}_{21}=\rm{diag}(0.02,0.05,0.04)$. 

Given $\bm{f},\bm{g}$ as in (\ref{DiffFlatEx}), and $\rho_{0},\rho_{1}$ as in (\ref{MargPDF1})-(\ref{MargPDF2}), instead of solving (\ref{FLSdensityprob}) or its equivalent (\ref{TransformedLOMT}), we seek a numerical solution for (\ref{TransOCFluidsrelax}), i.e., find a \emph{feasible} controller that steers prescribed $\rho_{0}$ to $\rho_{1}$ in unit time subject to given feedback linearizable dynamics. To this end, we numerically solve (\ref{RelaxedSchrodinger}), which is the dynamic stochastic regularization of (\ref{TransOCFluidsrelax}) for small $\epsilon$ (here, $\epsilon=0.01$). The resulting optimal regularized controlled PDF $\widetilde{\sigma}_{\epsilon}^{\rm{opt}}(\bz,t)$ in (\ref{SBPsolnPDF}) is then mapped back to original state space $\mathcal{X}$ as the pushforward 
\begin{align}
\bm{\tau}^{-1}\:\sharp\:\widetilde{\sigma}_{\epsilon}^{\rm{opt}} =: \rho_{\epsilon}(\bx,t).
\label{FinalPush}	
\end{align}

 Shown in Fig. \ref{ZcutTransients} are the joint PDFs $\rho_{\epsilon}(\bx,t)$ in $\mathcal{X}$ associated with the dynamics (\ref{DiffFlatEx}), wherein the PDFs $\widetilde{\sigma}_{\epsilon}^{\rm{opt}}(\bz,t)$ are computed by solving the associated Schr\"{o}dinger system mentioned in Section \ref{SecSchrodinger}. To recap, the steps for computing the $\epsilon$-regularized controlled PDF $\rho_{\epsilon}(\bx,t)$ are the following: 
 \begin{equation*}
\begin{tikzpicture}[baseline=-0.8ex]
    \matrix (m) [
            matrix of math nodes,
            row sep=5em,
            column sep=6.2em,
            text height=1.5ex, text depth=0.25ex
            ] {
        \left(\rho_{0},\rho_{1}\right) & \left(\sigma_{0},\sigma_{1}\right) &  \left(\hat{\sigma}_{0},\hat{\sigma}_{1}\right)   \\
        (\hat{h},h) & (\hat{h}_{0},h_{1}) & (\hat{h}_{0}^{{\rm{B}}},h_{1}^{{\rm{B}}}) \\
        \widetilde{\sigma}_{\epsilon}^{\rm{opt}}(\bz,t) & \rho_{\epsilon}(\bx,t). & \\
        };
    \path[->]
        (m-1-1) edge node[above] {(\ref{rho2sigma})} (m-1-2)
        (m-1-2) edge node[above]  {(\ref{OMTrelaxMarg})} (m-1-3)
        (m-1-3) edge node[left] {(\ref{IntegralEquations}) with $\kappa\mapsto\kappa^{{\rm{B}}}$} (m-2-3)
        (m-2-3) edge node[above] {(\ref{RecoverLinSchrodingerSystemSoln})} (m-2-2)
        (m-2-2) edge node[above] {(\ref{SBPsolnForwardKernel})-(\ref{SBPsolnBackwardKernel})} (m-2-1)
        (m-2-2) edge node[below] {use $\kappa$ from (\ref{kappaViaBrownianKernel})} (m-2-1)
        (m-2-1) edge node[left] {(\ref{SBPsolnPDF})} (m-3-1)
        (m-3-1) edge node[above] {(\ref{FinalPush})} (m-3-2);
\end{tikzpicture}   
\end{equation*}
\noindent The results in Fig. \ref{ZcutTransients} are depicted for six different time snapshots, with a uniform spatial discretization having 1000 grid points. A different view of the same plot is shown in Fig. \ref{YcutTransients}.

\begin{figure*}[t]
\centering
\includegraphics[width=.95\textwidth]{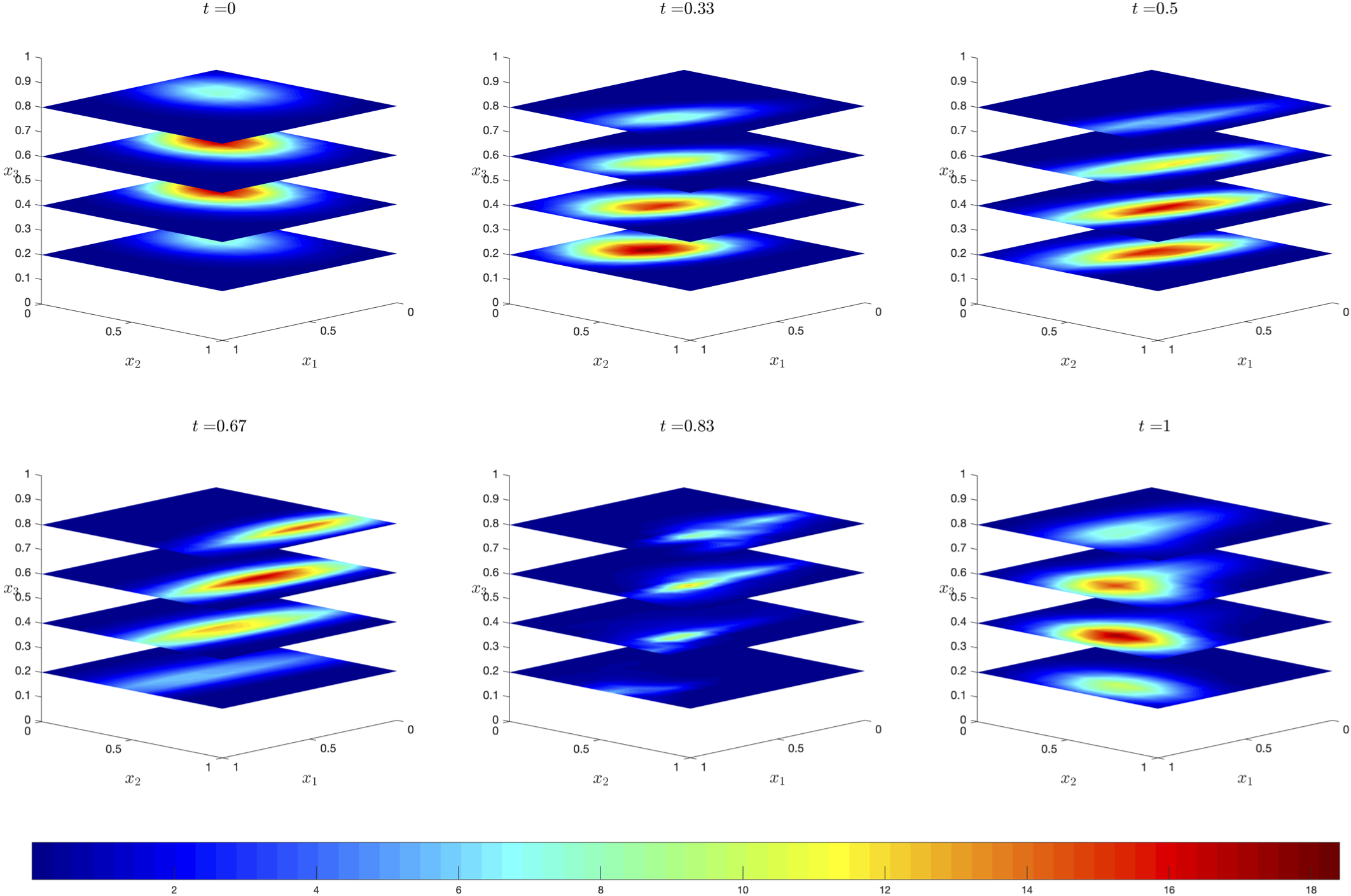}
\caption{\small{The colormap of the $\epsilon$-regularized controlled transient joint PDFs $\rho_{\epsilon}(\bx,t)$ for (\ref{DiffFlatEx}) with endpoint joint PDFs $\rho(\bx,0)=\rho_0(\bx)$ and $\rho(\bx,1)=\rho_1(\bx)$ given by (\ref{MargPDF1})-(\ref{MargPDF2}), resulting from the fixed point recursion of (\ref{IntegralEquations}) associated with (\ref{RelaxedSchrodinger}). Here, $\epsilon=0.01$. At any fixed $t\in[0,1]$, the joint PDF $\rho_{\epsilon}(\bx,t)$ is supported over three spatial dimensions in the state space $\mathcal{X}$. Different subplots correspond to different time snapshots. In each subplot, we take our planar slices at $x_3=0.2, 0.4, 0.6, 0.8$ to show the PDF evolution around the same. The color (\emph{red} = high, \emph{blue} = low) denotes the joint PDF value (see colormap).}}
\label{ZcutTransients}
\end{figure*}

\begin{figure*}[t]
\centering
\includegraphics[width=.95\textwidth]{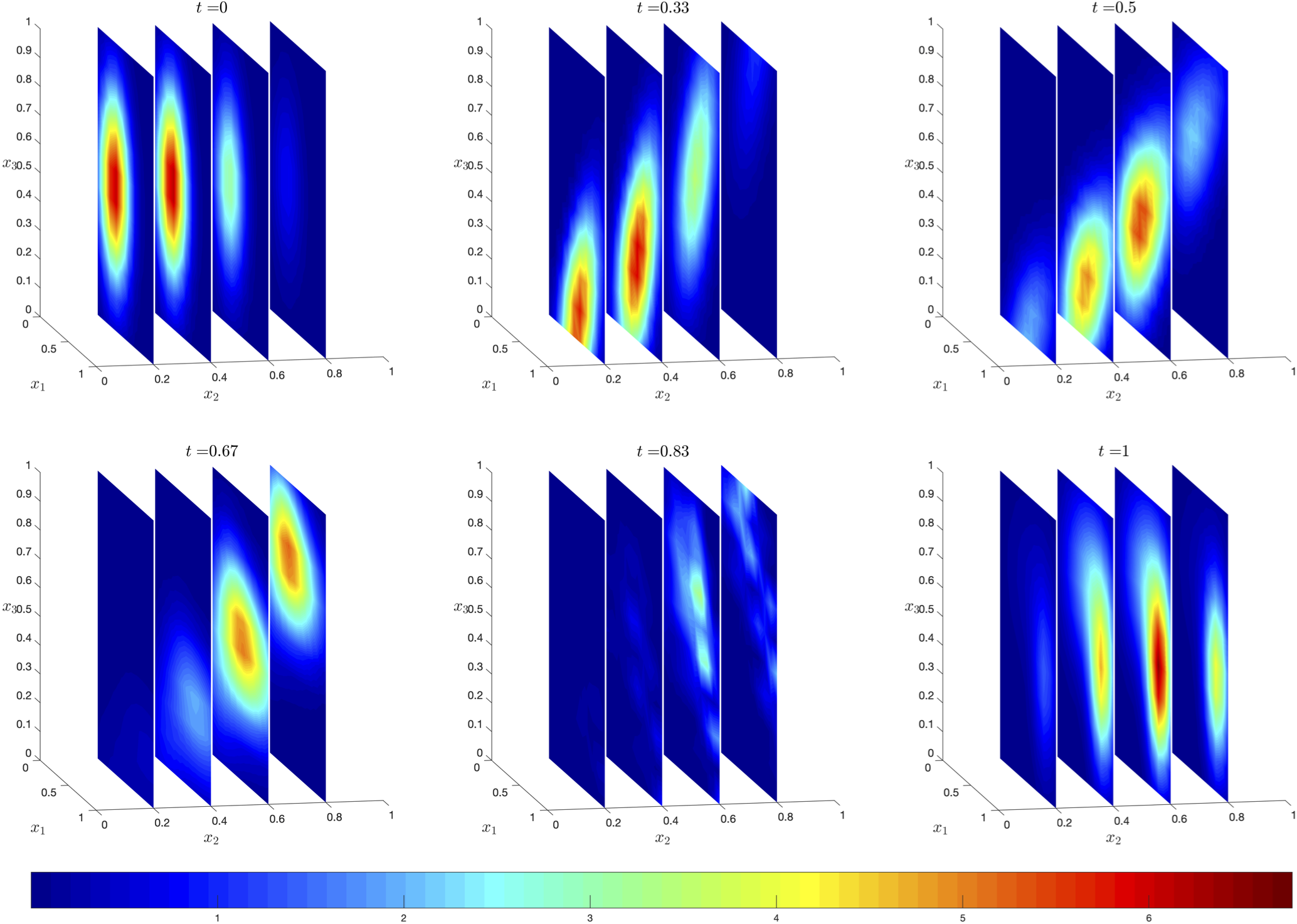}
\caption{\small{As in Fig. \ref{ZcutTransients}, shown above are colormaps of the $\epsilon$-regularized controlled transient joint PDFs $\rho_{\epsilon}(\bx,t)$ for (\ref{DiffFlatEx}) with endpoint joint PDFs $\rho(\bx,0)=\rho_0(\bx)$ and $\rho(\bx,1)=\rho_1(\bx)$ given by (\ref{MargPDF1})-(\ref{MargPDF2}), with same parameters as before. Here, we take four planar slices at $x_2=0.2, 0.4, 0.6, 0.8$ to show the PDF evolution around the same. The color (\emph{red} = high, \emph{blue} = low) denotes the joint PDF value (see colormap).}}
\label{YcutTransients}
\end{figure*}


\section{Conclusions} 
The problem of steering the state of a nonlinear control system from a prescribed joint PDF $\rho_{0}$ to another $\rho_{1}$, over a finite time horizon via feedback control is currently open in the literature. This atypical stochastic control problem can be seen as a measure-valued two-point boundary value problem subject to the controlled nonlinear dynamics. Motivated by the observation that many envisaged applications of this problem (e.g., probabilistic path planning for ground and aerial robots, swarm guidance) involve feedback linearizable systems, this paper presents the theory and computational algorithms for finite horizon density control for single input systems with full state feedback. We present theoretical characterization of the minimum energy feedback controller that reshapes the joint state PDF $\rho_{0}$ at time $t=0$ to $\rho_{1}$ at time $t=1$. Harnessing the recently established connections between stochastic control, the theory of optimal transport and the Schr\"{o}dinger bridge, we provide numerical algorithms to generate feasible controllers.

Several extensions are possible, e.g., generalizations for the multi-input static state feedback linearizable systems, and for the dynamic state feedback linearizable systems. The authors' recent work \cite{caluya2019finite} pursues the former. Another natural direction is to incorporate the input and the state (i.e., path inequality) constraints in problem (\ref{FLSprobMinEnergy}). We note that input constraints for finite horizon covariance control in linear systems were accounted recently in \cite{bakolas2018finite,okamoto2019input}. The recent papers \cite{okamoto2018optimal,ridderhof2019nonlinear,ridderhof2020chance} have addressed probabilistic state constraints for optimal covariance control. Another possible extension is to account more general quadratic cost in problem (\ref{FLSprobMinEnergy}), including the state penalty. To the best of the authors' knowledge, so far this has only been addressed for the controlled linear dynamics \cite{chen2018optimal}. Also, algorithms for the direct numerical solution of the minimum energy controller would be of interest. These will be pursued in the follow-up works of the authors. 


\appendix

\subsection{Proof of Theorem \ref{ThmHJBopt}}\label{appendix:deriveHJB}
\begin{proof}[\unskip\nopunct]
We rewrite the Lagrangian (\ref{Lagrangian}) as
\begin{align}
&\mathscr{L}(\sigma,\psi,v) = \int_{\mathcal{Z}} \int_{0}^{1}\frac{1}{2} \mathcal{L}(\bz,v) \:\sigma(\bz,t)\differential t\differential\bz  \nonumber\\
&+  \underbrace{\int_{\mathcal{Z}}\left(\int_{0}^{1}\psi(\bz,t)\dfrac{\partial \sigma}{\partial t}\differential t\right)\differential\bz}_{\text{term 1}} \nonumber\\
&+ \underbrace{\int_{0}^{1} \left(\int_{\mathcal{Z}}	\psi(\bz,t)\nabla_{\bz}\cdot\left(\left(\bm{A}\bz + \bm{b}v\right)\sigma(\bz,t)\right) \differential\bz\right)\differential t}_{\text{term 2}},						
\label{LagrangianExpanded}
\end{align}
and perform integration by parts in variable $t$ for term 1, and in variable $\bz$ for term 2, as indicated in (\ref{LagrangianExpanded}). Noting that term 1 yields
\[\underbrace{\int_{\mathcal{Z}} \left( \psi(\bm{z},1)\sigma_1(\bm{z})-  \psi(\bm{z},0)\sigma_0(\bm{z} \right) \differential t \ \differential\bm{z} }_{\text{constant w.r.t to $(\sigma,v)$ }}   -\int_{\mathcal{Z}} \int_{0}^{1}\frac{\partial \psi}{\partial t} \sigma \differential t \ \differential\bm{z},\]
we get that up to an additive constant, $\mathscr{L}$ equals
\begin{align}
\!\displaystyle\int_{\mathcal{Z}}\!\int_{0}^{1}\!\!\bigg\{\!\frac{1}{2}\mathcal{L}(\bz,v) - \dfrac{\partial \psi}{\partial t} - \langle\nabla_{\bz}\psi,\left(\bm{A}\bm{z}+\bb v\right)\rangle\!\bigg\}\sigma(\bz,t)\differential\bz\:\differential t.
\label{AfterIBP}	
\end{align}
Point-wise minimization of (\ref{AfterIBP}) w.r.t. $v$ gives the optimal control
\begin{align}
v^{\rm{opt}}(\bz,t) = \frac{\bb^{\top}\nabla_{\bz}\psi}{\beta_{\bm{\tau}}^{2}(\bz)} - \frac{\alpha_{\bm{\tau}}(\bm{z})}{\beta_{\bm{\tau}}(\bm{z})},
\label{OptControlv}	
\end{align}
which simplifies to (\ref{voptOptimal}) since $\bb=\bm{e}_{n}$. The pointwise minimization of the augmented Lagrangian can be justified via the Lagrange multiplier theorem in Banach spaces (see \cite[Ch. 4.14, Proposition 1]{zeidler1995applied}). Specifically, we define the function spaces 
\begin{align*}
\overline{\mathcal{S}}_{01} &:= \mathcal{S}_{01} \cap L^{2}(H^{1}(\mathcal{Z});[0,1]) \cap \dot{H}^{1}\left(\left(H^{1}(\mathcal{Z})\right)^{*};[0,1]\right),\\
X &:= \overline{\mathcal{S}}_{01} \times  L^{2}(\mathcal{Z} \times [0,1]), \quad Y:=  L^{2}(H^{-1}(\mathcal{Z});[0,1]),
\end{align*}
where $[0,1]$ denotes the time interval, $\mathcal{S}_{01}$ is defined as in (20), and $L^{2}\left(\cdot\right)$ denotes the space of square integrable functions. The notation $L^{2}(H^{1}(\mathcal{Z});[0,1])$ stands for the Sobolev space of functions having first order weak derivatives w.r.t. $\bm{z}\in\mathcal{Z}$, and finite $L^{2}$ norms w.r.t. $t\in[0,1]$. Furthermore, $\dot{H}^{1}\left(\left(H^{1}(\mathcal{Z})\right)^{*};[0,1]\right):= \{f(\cdot,t)\in L^{2}\left([0,1]\right) \mid \frac{\partial f}{\partial t} \in L^{2}\left([0,1]\right),f\in\left(H^{1}(\mathcal{Z})\right)^{*}\}$, wherein $\left(H^{1}(\mathcal{Z})\right)^{*}$ denotes the dual space of the Sobolev space $H^{1}(\mathcal{Z})$. We denote the dual space of $\overline{\mathcal{S}}_{01}$ as $\overline{\mathcal{S}}_{01}^{*}$. In the definition of $Y$, the notation $H^{-1}\left(\mathcal{Z}\right)$ stands for the space of all linear functionals on $H_{0}^{1}\left(\mathcal{Z}\right):=\{f\in H^{1}\left(\mathcal{Z}\right), \; \text{and vanishes on}\; \partial\mathcal{Z}\}$. Then, in (14), the objective functional $F: X \mapsto \mathbb{R}$, and is given by 
\begin{align*}
F(\sigma,v) := \int_{\mathcal{Z}} \int_{0}^{1} \frac{1} {2}   \mathcal{L}(\bm{z},v) \sigma(\bm{z},t) \: \differential t \: \differential \bm{z}. 
\end{align*}
In (14), the constraint is a mapping $G: X \mapsto Y$ defined by 
\begin{align*}
&G(\sigma,v)(\psi) :=  \int_{\mathcal{Z}}\psi(\bm{z},1)\sigma(\bm{z},1) \differential \bm{z}- \int_{\mathcal{Z}} \psi(\bm{z},0)\sigma(\bm{z},0) \differential \bm{z} \nonumber \\
                        &-\int_{\mathcal{Z}} \int_{0}^{1}  \frac{\partial \psi }{\partial t} \sigma(\bm{z},t)  \differential \bm{z} \: \differential t + \int_{\mathcal{Z}} \int_{0}^{1}  \langle \nabla \psi, \bm{A} \bm{z} + \bm{b}v \rangle  \sigma(\bm{z},t) \differential \bm{z} \: \differential t. 
\end{align*}
If we can show that $G^{\prime}_{\sigma}(\sigma,v)$ and $G^{\prime}_{v}(\sigma,v)$ (where $^{\prime}$ denotes derivative w.r.t. the subscripted variable) are surjective, then by \cite[Ch. 4.14, Proposition 1]{zeidler1995applied}, there exists\footnote{Our development here ensures the spatial Sobolev regularity of $\psi$ in that it guarantees the existence of $\psi\in L^{2}\left(H_{0}^{1}(\mathcal{Z});[0,1]\right)$. We expect that the same treatment can be extended for the augmented vector $\left(t, \bm{z}\right)$ to guarantee $\psi\in L^{2}\left(H_{0}^{1}\left([0,1]\times\mathcal{Z}\right)\right)$. Pursuing this is beyond the scope of this paper.} $\psi\in Y^{*} := L^{2}(H_{0}^{1}(\mathcal{Z});[0,1])$ that allows the pointwise minimization of (75) resulting in (76).

To show the surjectivity of the aforesaid maps, we begin by calculating the functional derivatives:
\begin{subequations}
\begin{align}
&G^{\prime}_{\sigma}(\sigma,v)\left(\widetilde{\sigma},\psi\right) = -\int_{\mathcal{Z}} \int_{0}^{1}  \frac{\partial \psi }{\partial t} \widetilde{\sigma}(\bm{z},t)  \differential \bm{z} \: \differential t \nonumber\\
&+ \int_{\mathcal{Z}} \int_{0}^{1}  \langle \nabla \psi, \bm{A} \bm{z} + \bm{b}v \rangle  \widetilde{\sigma}(\bm{z},t) \differential \bm{z} \: \differential t,\;\text{for}\;\left(\widetilde{\sigma},\psi\right)\in \overline{\mathcal{S}}_{01}^{*}\times Y^{*},\label{Gprimesigma}\\
	&G^{\prime}_{v}(\sigma,v)\left(\widetilde{v},\psi\right) = \int_{\mathcal{Z}} \int_{0}^{1}  \langle \nabla \psi, \bm{b}\widetilde{v} \rangle \sigma(\bm{z},t) \differential \bm{z} \: \differential t,\nonumber\\
	&\qquad\qquad\qquad\quad\text{for}\;\left(\widetilde{v},\psi\right)\in L^{2}\left(L^{\infty}\left(\mathcal{Z}\right);[0,1]\right)\times Y^{*}, \label{Gprimev}
\end{align}
\end{subequations}
and then proceed similar to \cite[p. 112--114]{albi2017mean}. Specifically, for $(\sigma^{\rm{opt}},v^{\rm{opt}})$ solving (\ref{TransformedLOMT}), to show $G^{\prime}_{\sigma}(\sigma^{\rm{opt}},v^{\rm{opt}}) : \overline{\mathcal{S}}_{01}^{*} \mapsto Y$ is surjective, we need to show that for any $\eta\in Y$, there exists $\widetilde{\sigma}\in \overline{\mathcal{S}}_{01}^{*}$ such that $G^{\prime}_{\sigma}(\sigma,v)\left(\widetilde{\sigma},\psi\right) = \eta(\psi)$, for all $\psi\in Y^{*}$. This can be proved by showing the existence of solution for a linear parabolic PDE Cauchy problem as in \cite[p. 112]{albi2017mean} via a priori estimates. That $G^{\prime}_{v}(\sigma^{\rm{opt}},v^{\rm{opt}}):L^{2}\left(L^{\infty}\left(\mathcal{Z}\right);[0,1]\right)\mapsto Y$ is surjective, can be shown by a weak formulation of the Poisson equation followed by invoking the Riesz representation theorem as in \cite[p. 113]{albi2017mean}. We eschew the details here.

Substituting (\ref{OptControlv}) back in the expression within curly braces in (\ref{AfterIBP}) and then equating to zero yields the dynamic programming equation
\begin{align}
\inf_{v\in\mathcal{V}}\bigg\{\!\frac{1}{2}\mathcal{L}(\bz,v) - \dfrac{\partial \psi}{\partial t} - \langle\nabla_{\bz}\psi,\left(\bm{A}\bm{z}+\bb v\right)\rangle\!\bigg\} = 0,
\label{DPeqn}	
\end{align}
wherein using the arg inf (\ref{OptControlv}) followed by algebraic simplification using $\bm{A},\bb$ in (\ref{feedbacklinearizedform}), results in the HJB PDE (\ref{HJBoptimal}).
\end{proof}


\subsection{Rockafellar-Wolenski Envelope Representation Formula}\label{appendix:EnvelopeFormula}
\noindent We summarize a result from \cite{rockafellar2000convexity} useful in our context.
\begin{theorem}(see \cite[p. 1357, Theorem 2.6]{rockafellar2000convexity})\label{RWthm}
Consider the HJB PDE (\ref{HJBform}) with state-dependent Hamiltonian $\mathcal{H}(\bz,\bm{\zeta})$, and associated initial condition $\psi(\bz,0):=\psi_{0}(\bz)$. Suppose that $\mathcal{H}$ is finite, concave in $\bz$, convex in $\bm{\zeta}$, and that
\begin{enumerate}
\item[(H1)]	there exist constants $a,b$, and finite convex function $\varphi$ such that
\[\mathcal{H}(\bz,\bm{\zeta}) \leq \varphi(\bm{\zeta}) + \left(a\parallel\bm{\zeta}\parallel_{2} + b\right)\parallel\bz\parallel_{2},\quad\text{for all}\;\bz,\bm{\zeta},\]
\item[(H2)] there exist constants $c,d$, and finite convex function $\vartheta$ such that
\[\mathcal{H}(\bz,\bm{\zeta}) \geq -\vartheta(\bm{\zeta}) - \left(c\parallel\bm{\zeta}\parallel_{2} + d\right)\parallel\bz\parallel_{2},\quad\text{for all}\;\bz,\bm{\zeta}.\]
\end{enumerate}
Let 
\begin{align}
	\ell(\bz,\bm{w}) := \!\underset{\bm{\zeta}}{\sup}\{\langle\bm{w},\bm{\zeta}\rangle - \mathcal{H}(\bz,\bm{\zeta})\},
	\label{conjugate2ndargument}
\end{align}
and define the ``dualizing kernel" $K:[0,\infty)\times\mathbb{R}^{n}\times\mathbb{R}^{n} \mapsto \mathbb{R}\cup\{+\infty\}$ as
\[K(t,\bz,\bm{r}) := \underset{\bm{q}(t)}{\inf}\bigg\{\langle\bm{q}(0),\bm{r}\rangle + \!\int_{0}^{t}\!\!\ell\left(\bm{q}(t),\dot{\bm{q}}(t)\right)\differential t \mid \bm{q}(t)=\bz\bigg\},\]
and $K(0,\bz,\bm{r}):= \langle\bz,\bm{r}\rangle$. For arbitrary $\psi_{0}:\mathbb{R}^{n}\mapsto\mathbb{R}\cup\{+\infty\}$, the function $\psi(\bz,t)$ admits the following upper envelope representation:
\begin{align}
\psi(\bz,t) = \underset{\overline{\bz}}{\inf}\:\underset{\bm{r}}{\sup}\bigg\{\psi_{0}(\overline{\bz}) - \langle \overline{\bz}, \bm{r}\rangle + K\left(t,\bz,\bm{r}\right)\bigg\}.
\label{GeneralUpperEnvelope}	
\end{align}
Furthermore, if $\psi_{0}(\bz)$ is convex, proper and lower semicontinuous, then the function $\psi(\bz,t)$ admits the following lower envelope representation:
\begin{subequations}
\begin{align}
\psi(\bz,t) &= \underset{\bm{r}}{\sup}\:\underset{\overline{\bz}}{\inf}\bigg\{\psi_{0}(\overline{\bz}) - \langle \overline{\bz}, \bm{r}\rangle + K\left(t,\bz,\bm{r}\right)\bigg\}
\\
&= \underset{\bm{r}}{\sup}\bigg\{-\psi_{0}^{*}\left(\bm{r}\right) + K\left(t,\bz,\bm{r}\right)\bigg\}.\label{LowerEnvelopeConjugateform}
\end{align}
\label{GeneralLowerEnvelope}		
\end{subequations}
\end{theorem}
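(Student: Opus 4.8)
The plan is to prove Theorem \ref{RWthm} by the convex-duality (``dualization'') route of \cite{rockafellar2000convexity}: realize $\psi$ as the value function of a Bolza problem whose Lagrangian is the partial Fenchel conjugate $\ell$ of $\mathcal{H}$, identify $K$ as that value function propagated from \emph{linear} initial data, and obtain the two envelope formulas by writing a general $\psi_{0}$ as a pointwise inf--sup (respectively, a pointwise sup--inf) of affine functions and passing the solution operator through it. First I would set up the dual Lagrangian: since $\mathcal{H}(\bz,\cdot)$ is finite and convex, Fenchel--Moreau gives $\mathcal{H}(\bz,\bm{\zeta})=\ell(\bz,\cdot)^{*}(\bm{\zeta})$ with $\ell$ as in (\ref{conjugate2ndargument}), and (H1)--(H2) make $\ell$ proper, jointly lower semicontinuous, and coercive in its last slot with controlled growth in $\bz$. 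Then I would take as the definition of the function $\psi$ in the statement the value function of the Bolza problem
\[\psi(\bz,t)=V_{t}[\psi_{0}](\bz):=\inf\Big\{\psi_{0}(\bq(0))+\int_{0}^{t}\!\ell\big(\bq(s),\dot{\bq}(s)\big)\,\differential s \;\Big|\; \bq\ \text{absolutely continuous},\ \bq(t)=\bz\Big\},\]
which, under (H1)--(H2), is the (lower/proximal) generalized solution of (\ref{HJBform}) with datum $\psi_{0}$ by the standard calculus-of-variations--to--Hamilton--Jacobi correspondence. Dynamic programming then yields the semigroup property $V_{t+s}=V_{t}\circ V_{s}$, monotonicity, and commutation with arbitrary pointwise infima (the outer operation in $V_{t}$ being itself an infimum).

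The dualizing kernel falls out immediately: taking $\psi_{0}=\langle\cdot,\bm{r}\rangle$ gives verbatim $V_{t}[\langle\cdot,\bm{r}\rangle](\bz)=K(t,\bz,\bm{r})$, so $K(t,\cdot,\bm{r})$ is the generalized solution of (\ref{HJBform}) with the linear datum $\langle\cdot,\bm{r}\rangle$, finite under (H1)--(H2), with $K(0,\bz,\bm{r})=\langle\bz,\bm{r}\rangle$; and directly from the defining formula $K(t,\bz,\cdot)$ is concave (an infimum of functions affine in $\bm{r}$, entering through the endpoint term $\langle\bq(0),\bm{r}\rangle$). For the upper envelope I would apply $V_{t}$ to the tautology $\psi_{0}(\overline{\bz})=\inf_{\overline{\bz}_{0}}\sup_{\bm{r}}\{\psi_{0}(\overline{\bz}_{0})+\langle\overline{\bz}-\overline{\bz}_{0},\bm{r}\rangle\}$ (the inner supremum is $+\infty$ unless $\overline{\bz}=\overline{\bz}_{0}$): commutation with $\inf_{\overline{\bz}_{0}}$ is free, while for the inner supremum of affine functions $g_{\bm{r}}(\cdot)=\psi_{0}(\overline{\bz}_{0})+\langle\cdot-\overline{\bz}_{0},\bm{r}\rangle$ one has $\sup_{\bm{r}}V_{t}[g_{\bm{r}}]\le V_{t}[\sup_{\bm{r}}g_{\bm{r}}]$ by monotonicity and the reverse inequality by the propagation property peculiar to Hamiltonians concave in $\bz$. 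Since $V_{t}[g_{\bm{r}}](\bz)=\psi_{0}(\overline{\bz}_{0})-\langle\overline{\bz}_{0},\bm{r}\rangle+K(t,\bz,\bm{r})$, relabeling $\overline{\bz}_{0}$ as $\overline{\bz}$ produces (\ref{GeneralUpperEnvelope}).

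For the lower envelope, assume $\psi_{0}$ convex, proper, lower semicontinuous; biconjugation gives $\psi_{0}(\overline{\bz})=\sup_{\bm{r}}\{\langle\overline{\bz},\bm{r}\rangle-\psi_{0}^{*}(\bm{r})\}$, so substituting into (\ref{GeneralUpperEnvelope}) reduces matters to interchanging $\inf_{\overline{\bz}}$ and $\sup_{\bm{r}}$ in $\inf_{\overline{\bz}}\sup_{\bm{r}}\{\psi_{0}(\overline{\bz})-\langle\overline{\bz},\bm{r}\rangle+K(t,\bz,\bm{r})\}$. The bracketed integrand is convex in $\overline{\bz}$ (because $\psi_{0}$ is) and concave in $\bm{r}$ (a linear term plus the concave $K(t,\bz,\cdot)$), and (H1)--(H2) supply the coercivity needed to invoke a minimax theorem for convex--concave saddle functions; carrying out the inner minimization $\inf_{\overline{\bz}}\{\psi_{0}(\overline{\bz})-\langle\overline{\bz},\bm{r}\rangle\}=-\psi_{0}^{*}(\bm{r})$ then yields (\ref{GeneralLowerEnvelope}) and, in particular, its conjugate form (\ref{LowerEnvelopeConjugateform}).

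The main obstacle I anticipate is the one-sided propagation identity $V_{t}[\sup_{\bm{r}}g_{\bm{r}}]=\sup_{\bm{r}}V_{t}[g_{\bm{r}}]$ used in the upper-envelope step: the inequality not automatic from monotonicity (namely $V_{t}[\sup_{\bm{r}}g_{\bm{r}}]\le\sup_{\bm{r}}V_{t}[g_{\bm{r}}]$) is precisely what forces the hypothesis that $\mathcal{H}$ be concave in $\bz$ (equivalently, that the upper and lower generalized solutions of (\ref{HJBform}) coincide), and it is also where the one-sided growth bounds (H1)--(H2) are genuinely spent, to preclude escape to infinity and to secure attainment in the Bolza infima. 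Care is additionally needed in pinning down which notion of generalized solution is meant and in verifying that the minimax swap in the lower-envelope step is legitimate under only the stated coercivity. Everything else — Fenchel--Moreau, dynamic programming, the minimax theorem — is routine once finiteness of $K$ and this propagation identity are in hand.
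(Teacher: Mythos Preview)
The paper does not prove Theorem~\ref{RWthm}; it merely \emph{states} it as a result imported from \cite[Theorem 2.6]{rockafellar2000convexity}, with the only accompanying text being the one-line remark that at $t=0$ formula~(\ref{LowerEnvelopeConjugateform}) reduces to the biconjugate identity. So there is no ``paper's own proof'' to compare against.

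That said, your sketch is a faithful outline of the Rockafellar--Wolenski argument itself: realizing $\psi$ as the value function $V_t[\psi_0]$ of a Bolza problem with Lagrangian $\ell=\mathcal{H}(\bz,\cdot)^*$, reading off $K(t,\bz,\bm{r})=V_t[\langle\cdot,\bm{r}\rangle](\bz)$, and then pushing $V_t$ through the inf--sup (respectively sup--inf) decomposition of $\psi_0$. You have also correctly located the only genuinely delicate step---the identity $V_t[\sup_{\bm r}g_{\bm r}]=\sup_{\bm r}V_t[g_{\bm r}]$, which is where concavity of $\mathcal{H}$ in $\bz$ and the growth hypotheses (H1)--(H2) are really used. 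One caution: your parenthetical that this is ``equivalently, that the upper and lower generalized solutions of (\ref{HJBform}) coincide'' is morally right but would need care to make precise in the Rockafellar--Wolenski framework, where the argument proceeds via conjugate duality of value functions rather than via a PDE comparison principle; if you were to flesh this out, you would want to track the dual value function and use the finiteness of $K$ (guaranteed by (H1)--(H2)) rather than invoking viscosity-solution uniqueness.
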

Notice that for $t=0$, formula (\ref{LowerEnvelopeConjugateform}) reduces to the bi-conjugate identity $\psi_{0}(\cdot)=\psi_{0}^{**}(\cdot)$ which is indeed valid when $\psi_{0}$ is convex and lower semicontinuous.


\subsection{Proof of Theorem \ref{ThmHJBoptHopfLax}}\label{appendix:DeriveEnvelope}
\begin{proof}[\unskip\nopunct]
	Using (\ref{alphabetatau}) and (\ref{defa}), we rewrite the Hamiltonian (\ref{HJBHamiltonian}) as
\begin{align}
\mathcal{H}\left(\bz,\bm{\zeta}\right) = \langle \bm{a}(\bz), \bm{\zeta} \rangle + \frac{1}{2}\left(L_{\bm{g}}L_{\bm{f}}^{n-1}\lambda(\bm{\tau}^{-1}(\bz))\right)^{2}\bm{\zeta}^{\top}\bb\bb^{\top}\bm{\zeta}.
\label{HamiltonianRewrite}	
\end{align}
Clearly, (\ref{HamiltonianRewrite}) is convex in $\bm{\zeta}$, and per assumption, is concave in $\bz$. 

To see that the conditions (H1)-(H2) in Theorem \ref{RWthm} hold for (\ref{HamiltonianRewrite}), notice that $v\in\mathcal{V}$ being finite energy (from Section \ref{SubsecReformulation}), the functions $\gamma,\delta$ in Remark \ref{u2vandv2u} must be bounded. Consequently, there exist positive scalars $c_{1},c_{2}$ such that $\lVert L^{n}_{\bm{f}} \lambda(\tau^{-1}(\bz))\rVert_{2} \leq c_{1}$, $\lVert \frac{1}{2}(L_gL_f^{n-1}\lambda(\tau^{-1}(\bz)))^2\rVert_{2} \leq c_{2}$, and thus, we get 
\begin{align}
\lvert\mathcal{H}\left(\bz,\bm{\zeta}\right)\rvert \leq \lVert\bA\rVert_{2} \lVert\bz\rVert_{2}\lVert\bm{\zeta}\rVert_{2} + c_{1} \lVert\bb\rVert_{2} \lVert\bm{\zeta} \rVert_{2} + c_{2} \lVert \bb \rVert_{2}^2  \lVert \bm{\zeta} \rVert_{2}^2.  
\label{boundH1}	
\end{align}
Recalling that $\lVert\bA\rVert_{2}=\lVert\bb\rVert_{2}=1$, from (\ref{boundH1}), we have that (H1) holds with $a=1,b=0$, and $\varphi(\bm{\zeta})=c_{1} \lVert\bm{\zeta} \rVert_{2} + c_{2} \lVert \bm{\zeta} \rVert_{2}^2$, for some $c_{1},c_{2}>0$. Unpacking the absolute value in (\ref{boundH1}) yields the desired inequality for condition (H2). 

Finally, to apply Theorem \ref{RWthm}, notice that in our case, the convex conjugate (\ref{conjugate2ndargument}) w.r.t. the argument $\bm{\zeta}$ is (\ref{TrueLagrangian}); see e.g., \cite[p. 108]{rockafellar1970convex}. Hence the statement.
\end{proof}


\subsection{Explicit Formula for $\bm{M}_{10},\bm{\Phi}_{10}$ in (\ref{OMTrelaxMarg})}\label{appendix:STMGramianFormula}
\noindent Given the linear time invariant system (\ref{feedbacklinearizedform}), for $0\leq s<t\leq 1$, its state transition matrix is 
\begin{align}
\bm{\Phi}_{ts}:=\bm{\Phi}(t,s) = \exp(\bA(t-s)),
\label{GeneralSTM}
\end{align} 
where $\exp(\cdot)$ denotes the matrix exponential. Since $\bA$ has nilpotency order $n$ (i.e., $\bA^{n} =\bm{0}$), hence the matrix $\bm{\Phi}_{10}=\exp(\bA)$ is upper-triangular with components 
\begin{equation}
  \exp(\bA)_{i,j} =  \begin{cases}
  \dfrac{1}{(j-i)!} & \text{for } i<j, \\
    1 & \text{for }   i=j, \\
    0 & \text{for }   i>j,
  \end{cases}
  \label{STMexplicit}
  \end{equation}
  where $i,j=1,\hdots,n$. Likewise, $\bm{\Phi}_{10}^{-1} = \exp(-\bA)$ appearing in (\ref{OMTrelaxMarg0}) is upper-triangular with elements which are negated of the same for $\bm{\Phi}_{10}$.
%
Recalling that the controllability Gramian is given by 
\begin{equation*}
\bM_{ts}:=\bM(t,s) = \!\int_{s}^{t}\!\!\exp(\bA(t-\tau)) \bb \bb^{\top}\exp(\bA(t-\tau))^{\top} \differential\tau,
\end{equation*}
direct calculation yields
\begin{align}
(\bM_{10})_{i,j} = \frac{1}{(n-i)!(n-j)!(2n-i-j+1)},
\label{ControllabGram10Explicit}	
\end{align}
for $i,j=1,\hdots,n$. In general, for $0\leq s<t\leq 1$, we have 
\begin{align}
(\bM_{ts})_{i,j} = \dfrac{(t-s)^{2n-i-j+1}}{(n-i)!(n-j)!(2n-i-j+1)},
\label{ControllabGram10Explicit}	
\end{align}
for $i,j=1,\hdots,n$.



\subsection{Existence and Uniqueness of Solution for (\ref{psiviaz0t})-(\ref{z0viaz})}\label{appendix:MOCinvfnthm}
Let $\bm{N}(t):=\exp(-t\bA)\bM(t,0)\exp(-t\bA^{\top}) \succ 0$, and note from  (\ref{z0viaz}) that the Jacobian
\begin{align}
\nabla_{\bz_{0}}\bz = \exp(t\bA)\bigg\{\bm{I} + \bm{N}(t)\bm{R}^{\top}\left[{\rm{Hess}}\left(\phi\left(\bm{R}\bz_{0}\right)\right) - \bm{I}\right]\bm{R}\bigg\}.
\label{Jaczz0}	
\end{align}
Since the spectrum of $\bm{N}(t)\bm{R}^{\top}\left[{\rm{Hess}}\left(\phi\left(\bm{R}\bz_{0}\right)\right) - \bm{I}\right]\bm{R}$ is same as that of $\bm{S}^{1/2}(t)\left[{\rm{Hess}}\left(\phi\left(\bm{R}\bz_{0}\right)\right) - \bm{I}\right]\bm{S}^{1/2}(t)$, where $\bm{S}(t) := \bm{R}\bm{N}(t)\bm{R}^{\top}\succ 0$, therefore from (\ref{STMexplicit}) and (\ref{Jaczz0}), we have that 
\[\det\left(\nabla_{\bz_{0}}\bz\right) = \prod_{i=1}^{n}\!\left(\!1+\lambda_{i}\!\left(\bm{S}^{1/2}(t)\left[{\rm{Hess}}\left(\phi\left(\bm{R}\bz_{0}\right)\right) - \bm{I}\right]\bm{S}^{1/2}(t)\right)\!\right)\]
wherein $\lambda_{i}(\cdot)$ denotes the $i$-th eigenvalue. Assuming that the endpoint PDFs $\hat{\sigma}_{0},\hat{\sigma}_{1}$ are bounded away from zero so that (\ref{MongeAmpere}) is well-defined, we have ${\rm{Hess}}(\phi)\succ\bm{0}$, and thus the $\lambda_{i}(\cdot)$ terms above are $> -1$ for all $i=1,\hdots,n$. Hence $\nabla_{\bz_{0}}\bz$ is nonsingular for all $\bz_{0}\in\mathcal{Z}$. By inverse function theorem, this confirms the existence of solution for (\ref{z0viaz}) for all $\bz\in\mathcal{Z}$, and hence the same for (\ref{psiviaz0t}). 

To see uniqueness, let $\bm{r}_{0} := \bm{R}\bz_{0}$, and rewrite (\ref{z0viaz}) as
\begin{align}
\left(\bm{I} - \bm{S}(t)\right)\bm{r}_{0} + \bm{S}(t)\hat{T}^{{\rm{opt}}}\left(\bm{r}_{0}\right) = \bm{R}\exp(-t\bA)\bz.
\label{r0eqn}	
\end{align}
Consider if possible that (\ref{z0viaz}) admits two solutions $\widetilde{\bz}_{0}\neq\overline{\bm{z}}_{0}$, or equivalently (since $\bm{R}$ is nonsingular) that (\ref{r0eqn}) admits two solutions $\widetilde{\bm{r}}_{0}\neq\overline{\bm{r}}_{0}$, for fixed $\bz$ and $t\in[0,1]$. Notice that $\bm{0}=\bm{S}(0) \preceq \bm{S}(t) \preceq \bm{I}=\bm{S}(1)$ for all $t\in[0,1]$. Substituting the two candidate solutions $\widetilde{\bm{r}}_{0},\overline{\bm{r}}_{0}$ for $\bm{r}_{0}$ in (\ref{r0eqn}), and subtracting the resulting two equations, we get 
\begin{align}
\left(\bm{I} - \bm{S}(t)\right)\left(\widetilde{\bm{r}}_{0}-\overline{\bm{r}}_{0}\right) + \bm{S}(t)\left(\hat{T}^{{\rm{opt}}}\left(\widetilde{\bm{r}}_{0}\right) - \hat{T}^{{\rm{opt}}}\left(\overline{\bm{r}}_{0}\right)\right) = \bm{0},
\label{Contradiction}	
\end{align}
which gives $\widetilde{\bm{r}}_{0}=\overline{\bm{r}}_{0}$ at both $t=0,1$, resulting in a contradiction. The contradiction at $t\in(0,1)$ can also be obtained by recalling the convexity of $\phi$, where $\hat{T}^{{\rm{opt}}}(\cdot) = \nabla\phi(\cdot)$. 


%
\bibliographystyle{IEEEtran}
\bibliography{references.bib}


%
%

\end{document}